\numberwithin{equation}{section} 
\newtheorem{theorem}{Theorem}[section]
\newtheorem{lemma}[theorem]{Lemma}
\newtheorem{example}[theorem]{Example}
\newtheorem{prop}[theorem]{Proposition}
\newtheorem{rem}[theorem]{Remark}
\newtheorem{corollary}[theorem]{Corollary}
\newtheorem{problem}[theorem]{Problem}
\newtheorem*{theorem*}{Main Theorem}
\def\rdiv{/}
\def\aut#1{\mathrm{Aut}(#1)}
\def\rmlt#1{\mathrm{Mlt}_r(#1)}
\def\col#1{\mathrm{Col}(#1)}
\def\inv{^{-1}}
\def\rdis#1{\mathrm{Dis}_r(#1)}
\title{Algebraic invariants of multi-virtual links}
\address[Kauffman]{Department of Mathematics, University of Illinois at Chicago, USA}
\email{{\rm loukau@gmail.com}}
\author{Louis H. Kauffman}
\address[Mukherjee]{Department of Mathematics, University of Denver, Colorado, USA}
\email{{\rm sujoymukherjee.math@gmail.com $|$ sujoy.mukherjee@du.edu}}
\author{Sujoy Mukherjee}
\address[Vojt\v{e}chovsk\'y]{Department of Mathematics, University of Denver, Colorado, USA}
\email{{\rm petr.vojtechovsky@du.edu}}
\author{Petr Vojt\v{e}chovsk\'y}
\date{\today}
\keywords{Multi-virtual knot theory, link invariants, operator quandles, quandle 2-cocycles, commuting quandle automorphisms, Bell number.}
\subjclass[2020]{Primary: 57K12. Secondary: 57K10}
\begin{document}

\begin{abstract}
Multi-virtual knot theory was introduced in $2024$ by the first author. In this paper, we initiate the study of algebraic invariants of multi-virtual links. After determining a generating set of (oriented) multi-virtual Reidemeister moves, we discuss the equivalence of multi-virtual link diagrams, particularly those that have the same virtual projections. We introduce operator quandles (that is, quandles with a list of pairwise commuting automorphisms) and construct an infinite family of connected operator quandles in which at least one third of right translations are distinct and pairwise commute. Using our set of generating moves, we establish the operator quandle coloring invariant and the operator quandle $2$-cocycle invariant for multi-virtual links, generalizing the well-known invariants for classical links. With these invariants at hand, we then classify certain small multi-virtual knots based on the existing tables of small virtual knots due to Bar-Natan and Green. Finally, to emphasize a key difference between virtual and multi-virtual knots, we construct an infinite family of pairwise nonequivalent multi-virtual knots, each with a single classical crossing. Many open problems are presented throughout the paper.
\end{abstract}

\maketitle

\section{Introduction}

Multi-virtual knot theory generalizes virtual knot theory \cite{Kau1} by employing multiple types of virtual crossings. From a topological point of view, virtual knot theory can be interpreted in terms of knots and links embedded in thickened surfaces up to knot theoretic equivalence in these surfaces and up to 1-handle stabilization. Multi-virtual knot theory can be similarly interpreted by adding labeled handles for each virtual crossing, subject to the restriction that only handles with the same label can merge, and with handle stabilization arising in relation to these operations.

Multi-virtual knot theory was initiated recently in \cite{Kau2}. In this paper we begin the investigation of invariants for multi-virtual links. Prominent among these are colorings and cocycle invariants based on operator quandles, which are quandles enhanced by a list of pairwise commuting automorphisms, one for each type of a virtual crossing.

The paper is organized as follows. In Section \ref{Sc:MVKT} we briefly recall the equivalence of multi-virtual links from \cite{Kau2}. Building upon \cite{Pol}, we derive a small generating set of Reidemeister moves for multi-virtual links and for oriented multi-virtual links. We prove that in the process of determining the equivalence of two multi-virtual link diagrams it is not necessary to introduce new types of virtual crossings not present in the two diagrams. We also point out that, unlike in the theory of classical and virtual knots, the total crossing number of a multi-virtual knot $K$ (that is, the minimal number of all classical and virtual crossings in a diagram of $K$) is not bounded above in terms of the classical crossing number of $K$. Consequently, the number of classical crossings is not a suitable organizing principle for a catalog of multi-virtual knots up to equivalence.

In Section \ref{Sc:Bell} we briefly discuss the problem of equivalence for all multi-virtual links with the same virtual projection (a diagram obtained by merging all types of virtual crossings into one type) and, more generally, the problem of equivalence of all multi-virtual links with given virtual projections $D_1$ and $D_2$. The number of cases that need to be considered is subject to combinatorial explosion and is equal to the Bell number $B_{n_1+n_2}$, where $n_i$ is the number of virtual crossings in $D_i$.

In Section \ref{Sc:Invariants} we introduce two invariants for multi-virtual links based on quandles and pairwise commuting quandle automorphisms. Given a multi-virtual link $K$ with virtual crossings of $n$ types, a quandle $Q$ and $n$ not necessarily distinct pairwise commuting automorphisms of $Q$, the first invariant counts the number of colorings of (any diagram of) $K$ based on these parameters. Unlike in classical knot theory, there might not exist any colorings for some choice of parameters, not even trivial colorings. The second invariant is a generalization of the well-known $2$-cocycle invariant \cite{CJKLS} which is based on quandle colorings and quandle $2$-cocycles.

To apply quandle colorings to multi-virtual links with $n$ types of virtual crossings, it is desirable to use quandles with at least $n$ distinct and pairwise commuting automorphisms, so that virtual crossings of distinct types are not conflated. We are therefore interested in quandles with many commuting automorphisms. We are particularly interested in connected quandles with many commuting and pairwise distinct right translations (which are automatically automorphisms). Some initial observations about such quandles are made in Section \ref{Sc:Quandles}. Among other results, we construct an infinite family of connected quandles $Q$ possessing $|Q|/3$ distinct and pairwise commuting right translations. We expect that this topic will be of independent interest for researchers working within algebraic theory of quandles.

Taking advantage of the existing tables of virtual knots \cite{Gre}, in Section \ref{Sc:Distinguish} we study a family of multi-virtual knots obtained from small virtual knots upon assigning in all possible ways the types to all virtual crossings. We distinguish the resulting multi-virtual knots by means of the invariants from Section \ref{Sc:Invariants}. Note that we do not attempt to comprehensively classify small multi-virtual knots here, but we offer some remarks to that effect.

This paper should be seen as a preliminary exploration of algebraic invariants of multi-virtual links and the classification of multi-virtual knots and links. Open problems abound. We collect some of them throughout the paper.

\subsection*{Acknowledgments}

This project started while the first author visited the University of Denver to deliver the 2024 Howe Lecture in Mathematics. The third author is supported by the Simons Foundation Mathematics and Physical Sciences Collaboration Grant for Mathematicians no.~855097.

\section{Multi-virtual knot theory}\label{Sc:MVKT}

Let $T$ be a fixed set whose elements will be called \emph{types} and denoted generically by $\alpha$, $\beta$, $\gamma$, etc. A \emph{multi-virtual link diagram} (of type $T$) is a planar drawing of a link in which classical crossings are depicted by overpasses and underpasses as usual, and where every virtual crossing is assigned a type from $T$. The type of a virtual crossing will be displayed either by placing a label next to the generic virtual crossing marker (a circle), or by using visually different markers for virtual crossings of different types; see Figure \ref{Fg:MVKD}. We do not demand that all elements of $T$ must occur as types in a diagram. 

\begin{figure}[!ht]
\centering
\begin{overpic}[width = 0.6\textwidth]{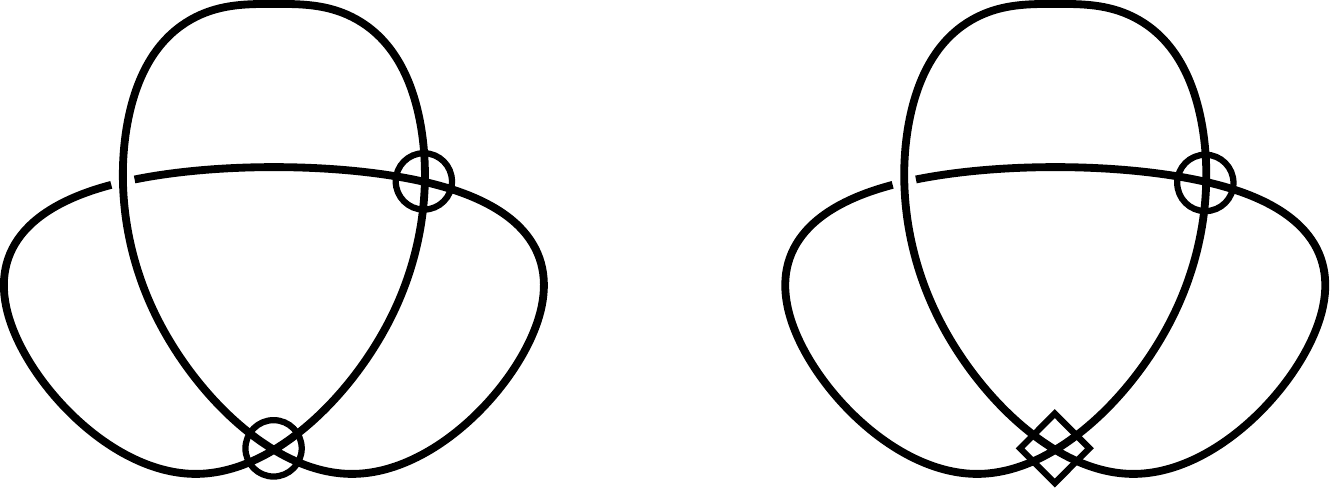}
\put(54,18){$\alpha$}
\put(77,52){$\beta$}
\end{overpic}
\caption{Two conventions for drawing multi-virtual link diagrams.}\label{Fg:MVKD}
\end{figure}

Two multi-virtual link diagrams are said to be \emph{equivalent} if one is obtained from the other by finitely many applications of isotopy, classical Reidemeister moves and multi-virtual detour moves. A \emph{multi-virtual detour move} is a move by which a consecutive sequence (possibly empty) of virtual crossings of the same type is excised from the diagram and the endpoints of this excision are reconnected by a diagrammatic arc that intersects the rest of the diagram in a sequence of virtual crossings of this same type. More precisely, a multi-virtual detour move allows a strand $s$ meeting $m$ strands at virtual crossings of the same type to be moved across an $m,n$-tangle, resulting in $n$ new virtual crossings of this same type, while freely adding/removing loops of this same type to the strand. A multi-virtual detour move with $m = 4$ and $n = 2$ is illustrated in Figure \ref{Fg:DetourMove}, where the gray box contains an $m,n$-tangle. 

\begin{figure}[!ht]
\includegraphics[width = 0.6\textwidth]{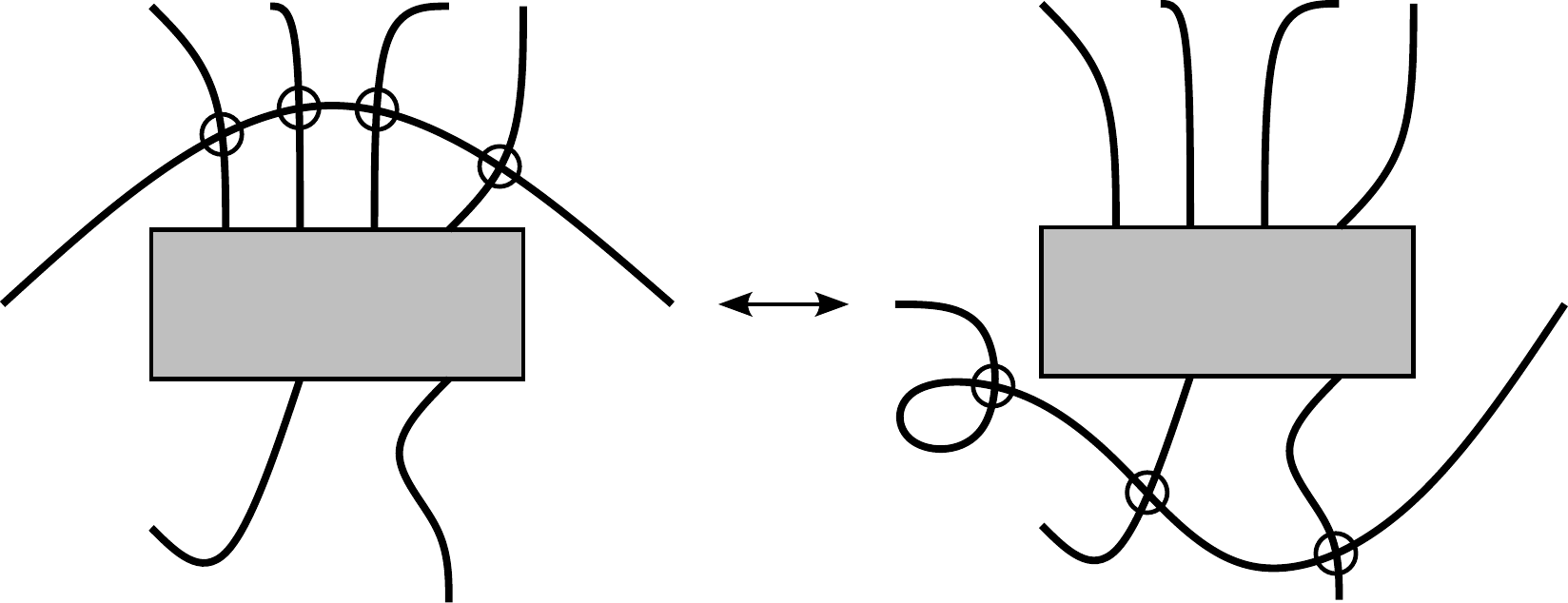}
\caption{A multi-virtual detour move.}\label{Fg:DetourMove}
\end{figure}

It is difficult to work with the multi-virtual detour move since it represents an infinite family of moves. Hence, it is desirable to have access to a small generating set of (oriented) Reidemeister moves, for instance while establishing that certain properties of diagrams are in fact invariants of links.

In this section we obtain a generating set of Reidemeister moves that is equivalent to the multi-virtual detour move (see Figure \ref{Fg:NonclassicalRmoves} and Proposition \ref{Pr:DetourEquiv}), a generating set of Reidemeister moves for multi-virtual links (see Theorem \ref{Th:Basis}), and a generating set of Reidemeister moves for oriented multi-virtual links (see Figure \ref{Fg:Minimalmoves} and Theorem \ref{Th:OrientedBasis}).

Since the multi-virtual Reidemeister 1 move can introduce a new type of a virtual crossing to a diagram, it is not immediately clear whether new types of virtual crossings need to be considered for the equivalence problem. We prove that no new types are needed (see Proposition \ref{Pr:Projection}).

We conclude this section by studying the equivalence problem of multi-virtual link diagrams with given virtual projections, by constructing an infinite family of interesting pairwise nonequivalent multi-virtual links with no classical crossings (distinguished by means of the chromatic bracket polynomial). An infinite family of pairwise nonequivalent multi-virtual knots, each with a single classical crossing, is given later in Section \ref{Sc:Distinguish}.

\subsection{A generating set of Reidemeister moves for multi-virtual links}

\begin{figure}[!ht]
\centering
\begin{overpic}[width = 0.7\textwidth]{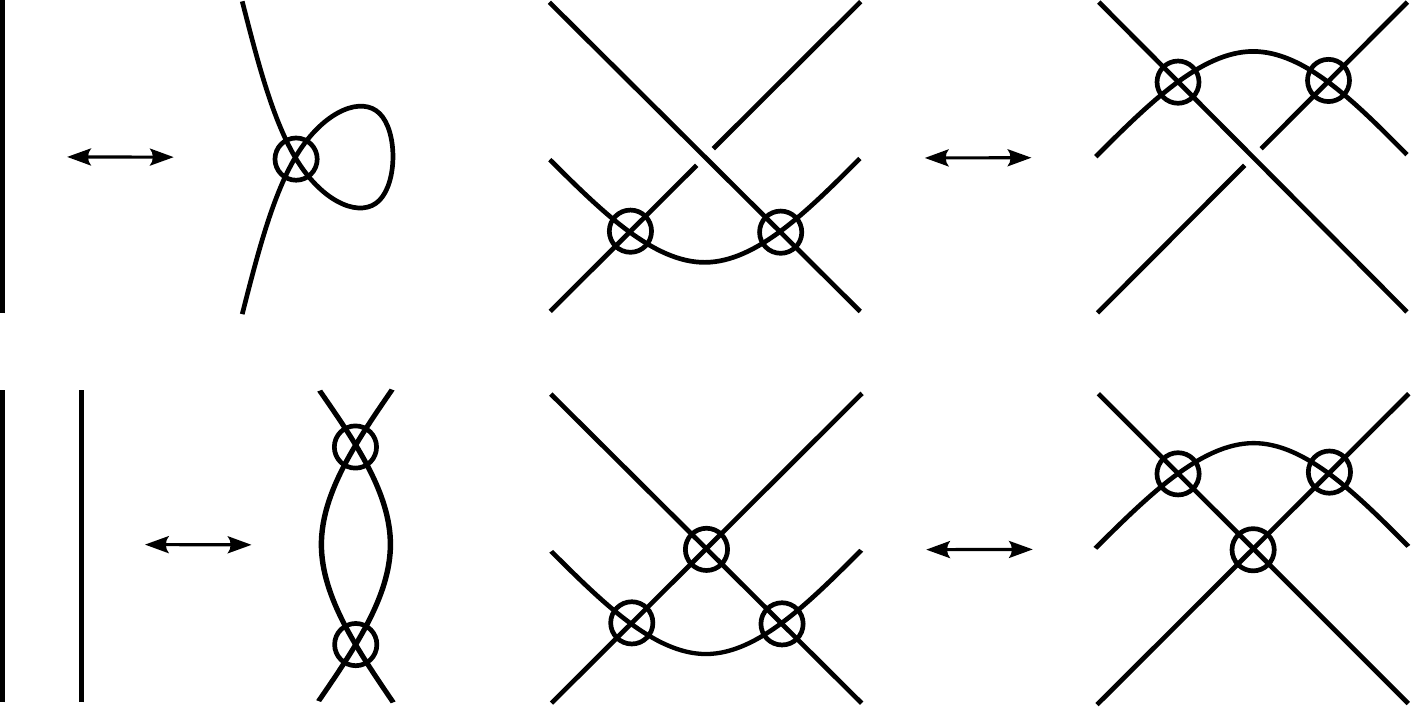}

\put(23,131){$v1$}
\put(41,40){$v2$}
\put(220,131){$cv3$}
\put(218,40){$mv3$}

\put(172,34){$\beta$}
\put(299,34){$\beta$}

\put(142,96){$\alpha$}
\put(179,96){$\alpha$}
\put(142,5){$\alpha$}
\put(179,5){$\alpha$}

\put(272,155){$\alpha$}
\put(307,155){$\alpha$}
\put(272,64){$\alpha$}
\put(307,64){$\alpha$}

\put(54,125){$\alpha$}
\put(67,58){$\alpha$}
\put(67,12){$\alpha$}

\end{overpic}
\caption{The multi-virtual detour move is equivalent to the four moves $v1$, $v2$, $cv3$ and $mv3$.}\label{Fg:NonclassicalRmoves}
\end{figure}

Each of the moves in Figure \ref{Fg:NonclassicalRmoves} is a special case of the multi-virtual detour move. Indeed, since the multi-virtual detour move allows the addition/removal of loops by definition, the $v1$ move can be obtained as a special case. Figure \ref{Fg:R2R3asdetour} illustrates the moves $v2$ and $mv3$ as multi-virtual detour moves. The $cv3$ move is obtained similarly by replacing the virtual crossing of type $\alpha$ in the tangle for the $mv3$ move with a classical crossing. 

\begin{figure}[!ht]
\centering
\begin{overpic}[width = 0.6\textwidth]{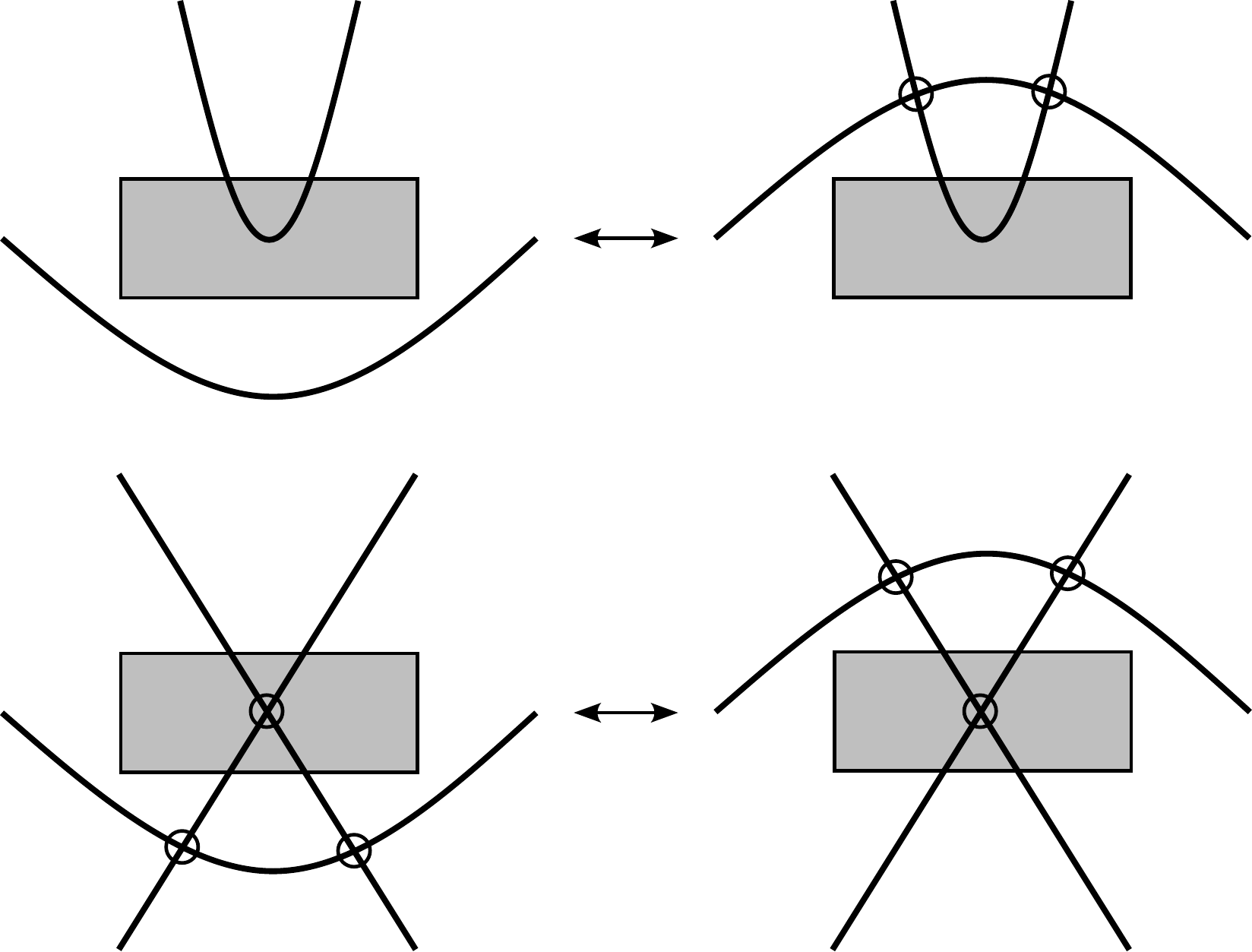}

\put(135,164){$v2$}
\put(131,58){$mv3$}

\put(67,52){$\alpha$}
\put(228,52){$\alpha$}

\put(194,194){$\beta$}
\put(243,194){$\beta$}

\put(26,16){$\alpha$}
\put(87,16){$\alpha$}
\put(188,84){$\alpha$}
\put(247,84){$\alpha$}

\end{overpic}
\caption{The $v2$ and $mv3$ moves realized as multi-virtual detour moves.}\label{Fg:R2R3asdetour}
\end{figure}

Proving the converse implication is also not difficult. Suppose that in a multi-virtual detour move the strand $s$ meets the $m$ strands of the $m,n$-tangle at virtual crossings of type $\alpha$. Observe that, using isotopy, any $m,n$-tangle can be broken into horizontal strips such that when scanning from top to bottom, we encounter one cap or one cup or one crossing (classical or virtual) in every strip, as illustrated in Figure \ref{Fg:DetourasRmoves}. Then, to move the strand $s$ across a strip, we use the $v2$ move when encountering caps or cups (noting that any virtual crossings added due to a $v2$ move will be of type $\alpha$), the $cv3$ move when encountering a classical crossing, and the $mv3$ move when encountering a virtual crossing. For instance, in the example depicted in Figure \ref{Fg:DetourasRmoves}, the multi-virtual detour move across the $3,5$-tangle shown in the figure consists of the following sequence of moves (from top to bottom): $v2$, $mv3$, $cv3$, $v2$, $cv3$, $v2$, $cv3$, $cv3$, $mv3$.

\begin{figure}[!ht]
\centering
\begin{overpic}[width = 0.8\textwidth]{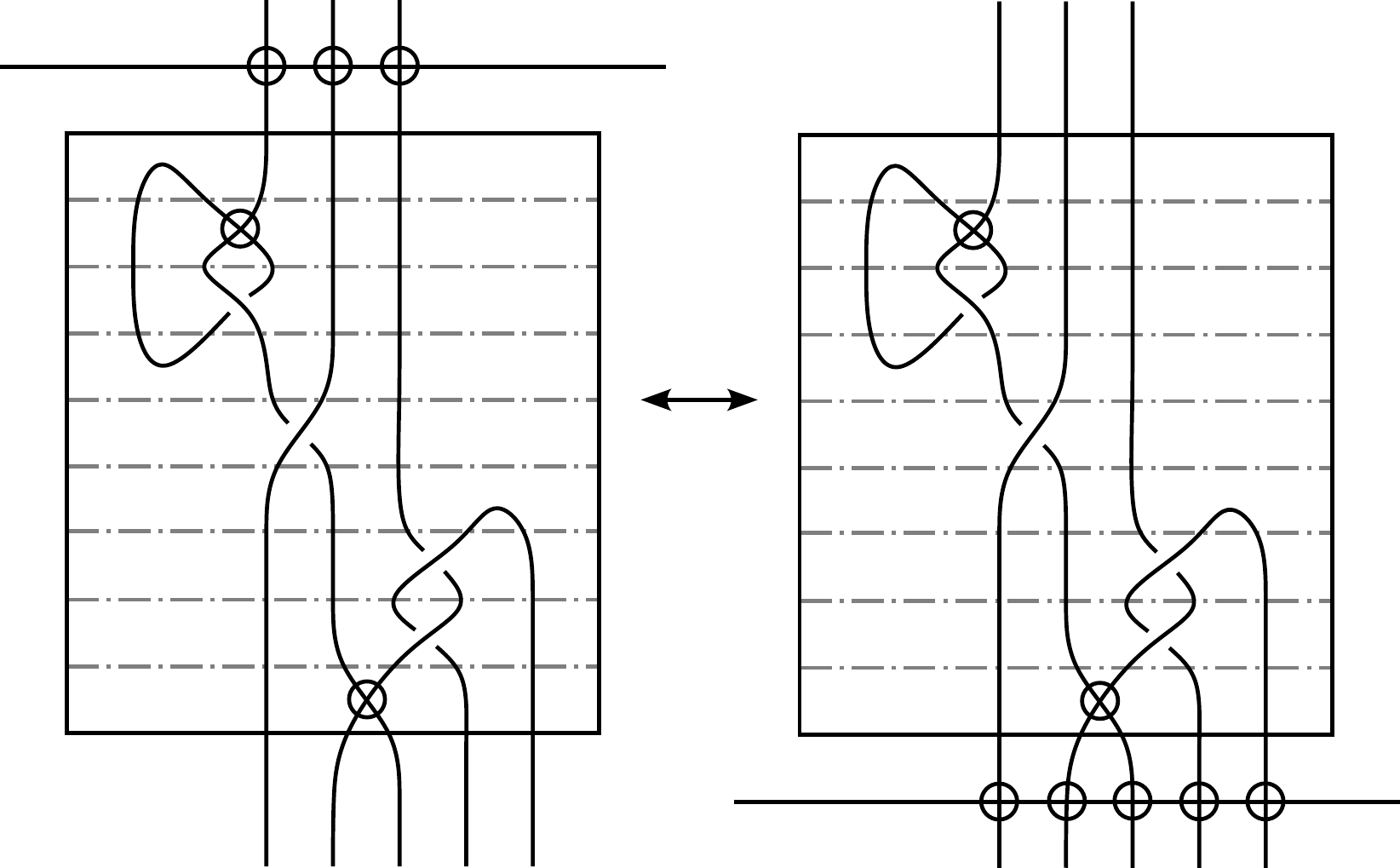}

\put(75,220){$\alpha$}
\put(93,220){$\alpha$}
\put(111,220){$\alpha$}

\put(71,169){$\alpha$}
\put(268,169){$\alpha$}

\put(105,42){$\beta$}
\put(302,42){$\beta$}

\put(272,9){$\alpha$}
\put(290,9){$\alpha$}
\put(308,9){$\alpha$}
\put(326,9){$\alpha$}
\put(344,9){$\alpha$}

\end{overpic}
\caption{Example of a multi-virtual detour move realized as a sequence of the moves $v1$, $v2$, $v3$, $cv3$, and $mv3$.}\label{Fg:DetourasRmoves}
\end{figure}

We have proved the following results:

\begin{prop}\label{Pr:DetourEquiv}
The multi-virtual detour move is equivalent to the moves $v1$, $v2$, $cv3$ and $mv3$ depicted in Figure \ref{Fg:NonclassicalRmoves}.
\end{prop}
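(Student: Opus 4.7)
The plan is to prove the equivalence in both directions, with the easier direction being that each of the four moves is an instance of a detour move, and the harder direction being that a general detour move decomposes as a sequence of the four moves.

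For the forward direction, I would inspect each move individually. The $v1$ move is immediate from the definition of the detour move, since the definition explicitly permits the free insertion or removal of loops of a fixed type. The $v2$ move corresponds to letting the traveling strand $s$ pass over a $2,0$-tangle that consists of a single cap (or, for the dual form, a $0,2$-tangle consisting of a single cup); after detouring, the strand no longer meets the cap/cup and produces $n = 0$ new virtual crossings. The moves $cv3$ and $mv3$ arise by applying the detour move with $m = n = 2$ to the $2,2$-tangle consisting, respectively, of a single classical crossing or a single virtual crossing of type $\alpha$; in both cases the strand $s$ is repositioned from one side of the elementary crossing to the other, producing two new virtual crossings of type $\alpha$.

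For the converse direction, I would first normalize the $m,n$-tangle. Using planar isotopy (fixing the boundary), any such tangle can be put in a Morse-theoretic form where a horizontal slicing yields a finite sequence of horizontal strips, each containing exactly one elementary piece: a cap, a cup, a classical crossing, or a virtual crossing. This reduction is entirely standard and requires no new ideas. Once in this form, I would argue by induction on the number of strips, moving the strand $s$ across one strip at a time: at a cap or cup apply $v2$, at a classical crossing apply $cv3$, and at a virtual crossing apply $mv3$. Any free loops on the endpoints of $s$ are then cleared using $v1$. A concrete example, along the lines of what one might depict in a figure, makes the scheme transparent.

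The main subtlety to verify — and the point I would check most carefully — is that each intermediate move introduces virtual crossings of exactly the type $\alpha$ that the detour move requires. For $v2$ applied at a cap/cup situated on a strand that currently meets $s$ in $\alpha$-crossings, the two newly created crossings must inherit the type $\alpha$; similarly for $cv3$ and $mv3$. Establishing this type-propagation locally at each of the three move types, and then observing that it composes consistently down the strips of the tangle, should close the argument.
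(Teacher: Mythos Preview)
Your proposal is correct and mirrors the paper's argument: each of $v1$, $v2$, $cv3$, $mv3$ is exhibited as a special instance of the detour move, and conversely the $m,n$-tangle is sliced into horizontal Morse strips so the strand can be pushed across one elementary piece (cap, cup, classical crossing, or virtual crossing) at a time via the corresponding local move. The only slip is that in $mv3$ the single virtual crossing inside the $2,2$-tangle is of an arbitrary type $\beta$, not necessarily $\alpha$ (the type carried by the moving strand); this changes nothing in the argument, since the detour move places no restriction on what the tangle contains.
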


\begin{theorem}\label{Th:Basis}
Two multi-virtual link diagrams are equivalent if and only if one is obtained from the other by finitely many applications of isotopy, classical Reidemeister moves, and the multi-virtual Reidemeister moves $v1$, $v2$, $cv3$ and $mv3$. 
\end{theorem}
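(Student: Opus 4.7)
The plan is to deduce the theorem directly from the definition of equivalence of multi-virtual link diagrams together with Proposition \ref{Pr:DetourEquiv}. By definition, two diagrams are equivalent precisely when they can be connected by a finite sequence of isotopies, classical Reidemeister moves, and multi-virtual detour moves. Since Proposition \ref{Pr:DetourEquiv} identifies the multi-virtual detour move with the combined system of the four moves $v1$, $v2$, $cv3$, $mv3$, the theorem amounts to substituting this identification into the definition.

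For the ``if'' direction, I would observe that each of $v1$, $v2$, $cv3$, $mv3$ is itself a special case of a multi-virtual detour move, as noted explicitly for $v1$ (adding/removing a loop), for $v2$ and $mv3$ in Figure \ref{Fg:R2R3asdetour}, and for $cv3$ via the analogous realization obtained from $mv3$ by replacing the interior virtual crossing with a classical one. Hence any sequence composed of isotopies, classical Reidemeister moves, and moves from $\{v1,v2,cv3,mv3\}$ is already a sequence realizing equivalence in the sense of the original definition. For the ``only if'' direction, I would take any realizing sequence of isotopies, classical Reidemeister moves, and multi-virtual detour moves and replace every detour move in turn by a finite composition of $v1$, $v2$, $cv3$, $mv3$ moves, applying the scan-by-horizontal-strips construction described immediately after Proposition \ref{Pr:DetourEquiv} and illustrated in Figure \ref{Fg:DetourasRmoves}.

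There is essentially no obstacle beyond a small bookkeeping point: in rewriting a detour move as a sequence of generators, one must ensure that the $m,n$-tangle admits a horizontal slicing in which each strip contains exactly one cap, cup, or crossing. This is a standard Morse-theoretic positioning of a generic planar tangle, achievable by a preliminary planar isotopy and already implicit in the argument supporting Proposition \ref{Pr:DetourEquiv}; I would simply invoke it rather than develop it in detail. With that in place, the proof of Theorem \ref{Th:Basis} reduces to the two directions above, both of which are immediate from Proposition \ref{Pr:DetourEquiv}.
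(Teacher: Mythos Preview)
Your proposal is correct and matches the paper's approach: Theorem~\ref{Th:Basis} is stated there as an immediate consequence of the definition of equivalence together with Proposition~\ref{Pr:DetourEquiv}, with no further argument given. The only quibble is a referencing slip---the horizontal-strip construction appears \emph{before} Proposition~\ref{Pr:DetourEquiv} in the paper, not after it.
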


We will refer to the Reidemeister moves of Theorem \ref{Th:Basis} as \emph{multi-virtual Reidemeister moves}.

\subsection{A generating set of Reidemeister moves for oriented multi-virtual links}

Polyak showed in \cite{Pol} that any two diagrams of an oriented classical link are related by isotopy and a finite sequence of the moves $\Omega1a,$ $\Omega1b,$ $\Omega2a,$ and $\Omega3a$, using his notation. See Figure \ref{Fg:Minimalmoves} for a depiction of the $\Omega$-moves.

Polyak's proof can be extended to virtual links. Note that there is no Reidemeister 1 move of mixed type and that Reidemeister 2 moves of mixed type are forbidden in multi-virtual knot theory \cite{Kau2}. Therefore, to obtain a generating set of oriented Reidemeister moves for multi-virtual knot theory, we can start by listing several different Reidemeister 3 moves of multi-virtual type in Figure \ref{Fg:VirtualR3moves}, where we have adapted Polyak's nomenclature from classical moves to multi-virtual moves by changing $\Omega$ to $mv$. For example, the analogue of the $\Omega3e$ move in Polyak's list is the $mv3e$ move in our list. 

\begin{figure}[!ht]
\centering
\begin{overpic}[width = \textwidth]{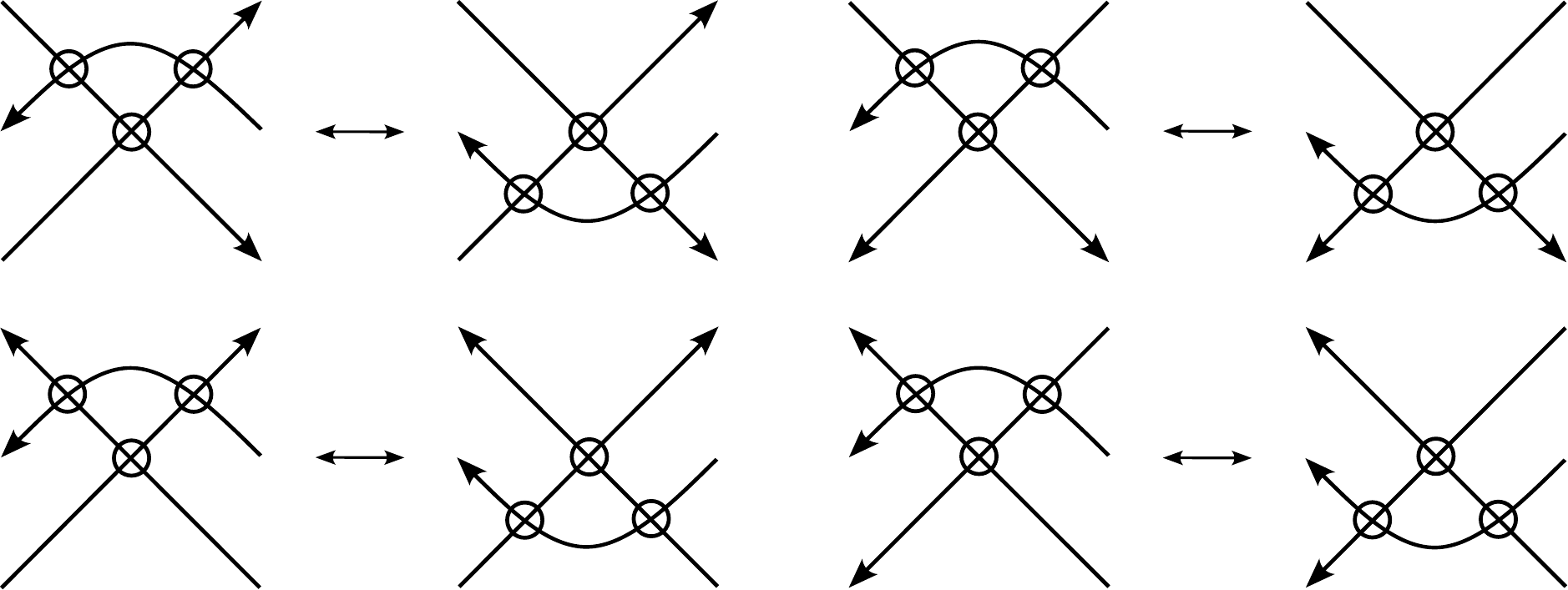}
\put(95,142){$mv3a$}
\put(349,142){$mv3b$}
\put(95,45){$mv3c$}
\put(349,45){$mv3e$}

\put(36,120){$\beta$}
\put(173,148){$\beta$}
\put(290,120){$\beta$}
\put(427,148){$\beta$}
\put(36,23){$\beta$}
\put(173,50){$\beta$}
\put(290,23){$\beta$}
\put(427,50){$\beta$}

\put(5,154){$\alpha$}
\put(67,154){$\alpha$}
\put(259,154){$\alpha$}
\put(321,154){$\alpha$}

\put(142,115){$\alpha$}
\put(204,115){$\alpha$}
\put(396,115){$\alpha$}
\put(458,115){$\alpha$}

\put(5,57){$\alpha$}
\put(67,57){$\alpha$}
\put(259,57){$\alpha$}
\put(321,57){$\alpha$}

\put(142,18){$\alpha$}
\put(204,18){$\alpha$}
\put(396,18){$\alpha$}
\put(458,18){$\alpha$}
\end{overpic}
\caption{Multi-virtual Reidemeister 3 moves.}\label{Fg:VirtualR3moves}
\end{figure}

\begin{lemma}\label{Lm:Aux}
The moves $mv3b$, $mv3c$ and $mv3e$ follow from the virtual Reidemeister 2 moves and the $mv3a$ move.
\end{lemma}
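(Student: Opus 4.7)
My plan is to adapt Polyak's classical derivation of the oriented Reidemeister 3 moves $\Omega 3b$, $\Omega 3c$, and $\Omega 3e$ from $\Omega 3a$ and $\Omega 2a$ in \cite{Pol} to the multi-virtual setting. The three target moves $mv3b$, $mv3c$, $mv3e$ realize the same underlying geometric operation as $mv3a$, namely sliding the strand carrying the two $\alpha$-crossings past the $\beta$-crossing between the other two strands; they differ only in the orientation pattern of the three strands involved. Since virtual crossings carry no over/under data, Polyak's sign-based obstructions simplify in this setting and the same insertion trick applies to all three cases.

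Concretely, for each target $mv3x$ with $x \in \{b, c, e\}$ I would proceed in four steps. First, starting from the left-hand side of $mv3x$, I would insert a pair of virtual crossings via a single-type $v2$ move placed at a strategically chosen position on one of the three strands; the choice of strand and of virtual type ($\alpha$ or $\beta$) is dictated by the orientation pattern of $mv3x$, and is made so that the resulting diagram contains, as a local subdiagram, the left-hand side of $mv3a$ with the correct type labels and orientations. Second, I would apply $mv3a$ to perform the slide. Third, after the slide the two inserted virtual crossings have become adjacent in a bigon and can be cancelled by another application of the same-type $v2$ move. Fourth, after a planar isotopy the resulting diagram is the right-hand side of $mv3x$.

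The hard part will be a case-by-case pictorial verification: for each of $mv3b$, $mv3c$, $mv3e$ one must exhibit an insertion whose post-$mv3a$ image admits a cancelling $v2$ move producing exactly the desired right-hand side. A secondary subtlety is that every inserted $v2$ pair must be of a single virtual type, since mixed-type virtual Reidemeister 2 moves are forbidden in multi-virtual knot theory \cite{Kau2}; this is automatic provided each bigon is inserted between two strands of the configuration that share a single virtual type, which is always possible here because only the two types $\alpha$ and $\beta$ occur. No individual step is conceptually deep; the challenge is faithful bookkeeping of strand orientations, essentially amounting to redrawing Polyak's original diagrams with virtual crossings of the appropriate types in place of the classical ones.
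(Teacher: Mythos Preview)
Your proposal is correct and follows the same approach as the paper: both adapt Polyak's $v2$-insertion trick to the multi-virtual setting, inserting a same-type virtual $R2$ bigon so that an $mv3a$ configuration appears, applying $mv3a$, and cancelling the bigon. The only cosmetic difference is that the paper derives $mv3e$ through the already-established $mv3c$ (sequence $v2a$, $mv3c$, $v2a$) rather than directly from $mv3a$; you should be prepared for this small cascade when carrying out the case-by-case verification for $mv3e$.
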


\begin{proof}
The proof, an adaptation from the classical case, follows from Figure \ref{Fg:Mv3b3c3e}. In the figure, all non-labeled virtual crossings are of type $\alpha$.

\begin{figure}[!ht]
\centering
\begin{overpic}[width = \textwidth]{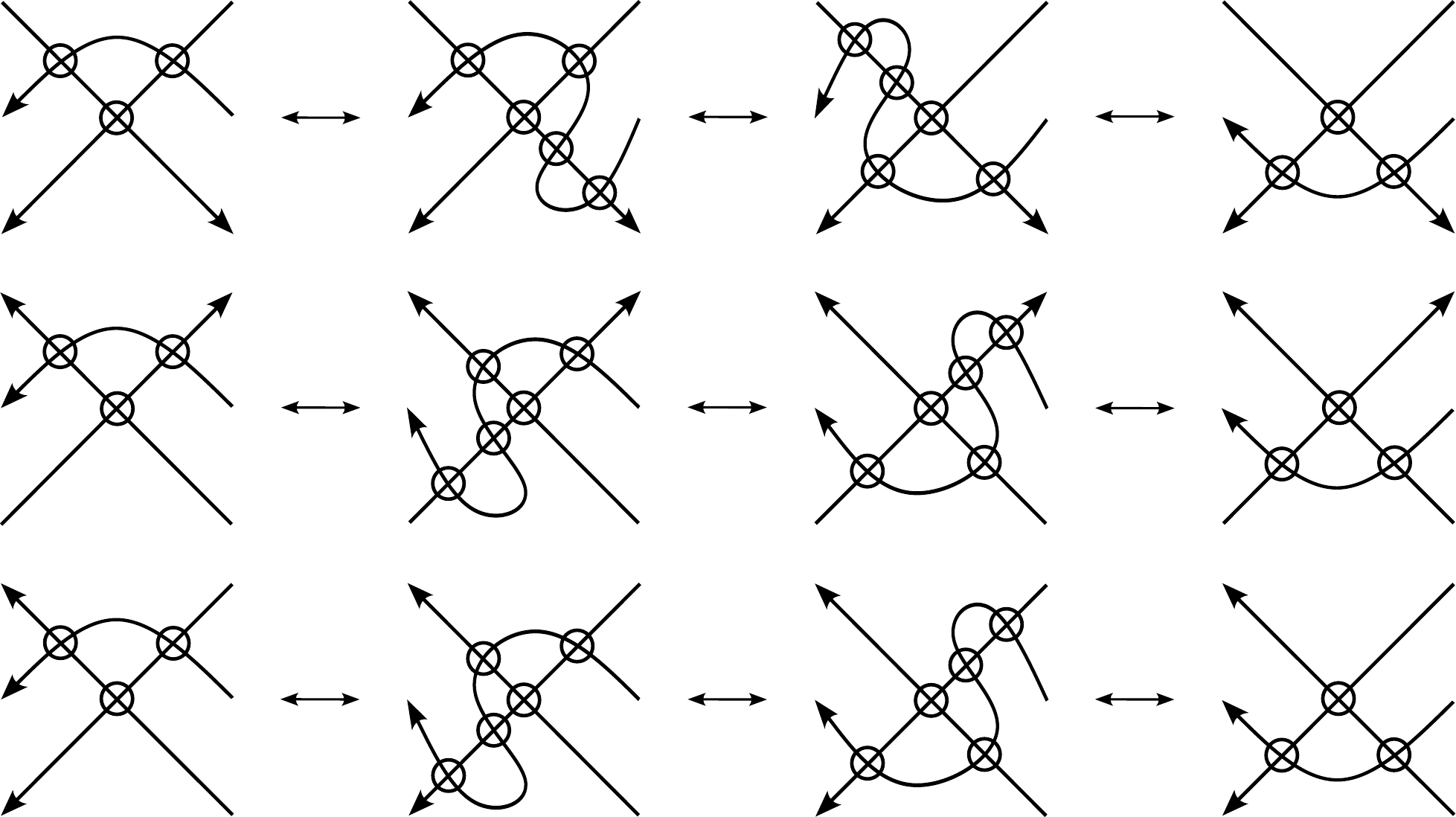}

\put(96,230){$v2c$}
\put(96,136){$v2c$}
\put(96,42){$v2a$}
\put(221,230){$mv3a$}
\put(221,136){$mv3a$}
\put(221,42){$mv3c$}
\put(358,230){$v2c$}
\put(358,136){$v2c$}
\put(358,42){$v2a$}

\put(22,222){$\beta$}
\put(22,128){$\beta$}
\put(22,34){$\beta$}
\put(154,222){$\beta$}
\put(177,129){$\beta$}
\put(177,35){$\beta$}
\put(308,223){$\beta$}
\put(286,128){$\beta$}
\put(286,34){$\beta$}
\put(439,223){$\beta$}
\put(439,130){$\beta$}
\put(439,36){$\beta$}

\end{overpic}
\caption{Proof of Lemma \ref{Lm:Aux}. The moves $mv3b$, $mv3c$, and $mv3e$ (from top to bottom) follow from the virtual Reidemeister 2 moves and the $mv3a$ move.}\label{Fg:Mv3b3c3e}
\end{figure}

\end{proof}

Therefore, we have the following theorem:

\begin{theorem}\label{Th:OrientedBasis}
Two oriented multi-virtual link diagrams are equivalent if and only if one is obtained from the other by finitely many applications of isotopy and the moves $\Omega1a$, $\Omega1b$, $\Omega2a$, $\Omega3a$, $v1a$, $v1b$, $v2a$, $cv3a$ and $mv3a$, depicted in Figure \ref{Fg:Minimalmoves}.
\end{theorem}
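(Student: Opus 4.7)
The plan is to combine Theorem \ref{Th:Basis} with Polyak's oriented generating set for classical links \cite{Pol}, and then reduce each oriented variant of the multi-virtual moves to the nine listed generators. By Theorem \ref{Th:Basis}, every equivalence of unoriented multi-virtual link diagrams is generated by isotopy, the classical Reidemeister moves, and the multi-virtual moves $v1$, $v2$, $cv3$ and $mv3$. In the oriented setting each of these splits into several variants determined by strand orientations, and the theorem amounts to the assertion that every such variant is a composition of the moves in Figure \ref{Fg:Minimalmoves}.

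First, I would dispose of the oriented classical Reidemeister moves by directly invoking Polyak's theorem \cite{Pol}: all variants follow from $\Omega1a$, $\Omega1b$, $\Omega2a$ and $\Omega3a$. For the oriented variants of $v1$, there are four cases determined by the strand orientation and the side on which the kink lies; two of these are $v1a$ and $v1b$, and the remaining two follow by planar isotopy, mimicking Polyak's reduction for classical R1. For $v2$, there are parallel and anti-parallel variants; the anti-parallel move $v2c$ (using the nomenclature visible in Figure \ref{Fg:Mv3b3c3e}) reduces to $v2a$ by inserting and removing $v1$-loops, exactly as in Polyak's treatment of oriented classical R2.

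The most delicate step is the oriented variants of the R3-type moves $cv3$ and $mv3$. A priori there are eight oriented variants of each, in analogy with Polyak's eight oriented classical R3 moves. However, Lemma \ref{Lm:Aux} already establishes in the unoriented case that $mv3b$, $mv3c$ and $mv3e$ follow from virtual R2 moves and $mv3a$. I would re-examine the reduction diagrams of Figure \ref{Fg:Mv3b3c3e} with orientations attached and verify that, for every oriented assignment on the outer strands, each intermediate move is either a $v2$-type move (already reduced to $v2a$) or $mv3a$ itself. The same diagrams, with the type-$\alpha$ virtual crossing in the auxiliary tangle replaced by a classical crossing, handle the oriented variants of $cv3$, so the reductions proceed uniformly in the two cases.

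The main obstacle is bookkeeping rather than ideas: one must carefully enumerate the oriented variants of each R3-type move and check that every auxiliary move in the reduction is compatible with the prescribed orientation. Since each such auxiliary move is either a $v2$-type or a classical oriented Reidemeister move, both of which have already been reduced to the listed generators, no new generators appear, and the theorem follows.
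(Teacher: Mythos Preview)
Your proposal is correct and follows essentially the same route as the paper: both invoke Polyak's generating set for the classical moves and then extend Polyak's reduction scheme to the virtual, mixed, and multi-virtual moves, with Lemma~\ref{Lm:Aux} (and its $cv3$ analogue) handling the Reidemeister~3--type variants. The paper is terser---it simply asserts that ``Polyak's proof can be extended to virtual links'' and records only the $mv3$ reductions in Figure~\ref{Fg:Mv3b3c3e}---while you spell out the $v1$ and $v2$ reductions and the orientation bookkeeping more explicitly; but the underlying argument is the same.
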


We will refer to the oriented Reidemeister moves of Theorem \ref{Th:OrientedBasis} as \emph{oriented multi-virtual Reidemeister moves}, or just as \emph{multi-virtual Reidemeister moves} when it is clear from the context that we are working with oriented multi-virtual links.

\begin{figure}[!ht]
\centering
\begin{overpic}[width = \textwidth]{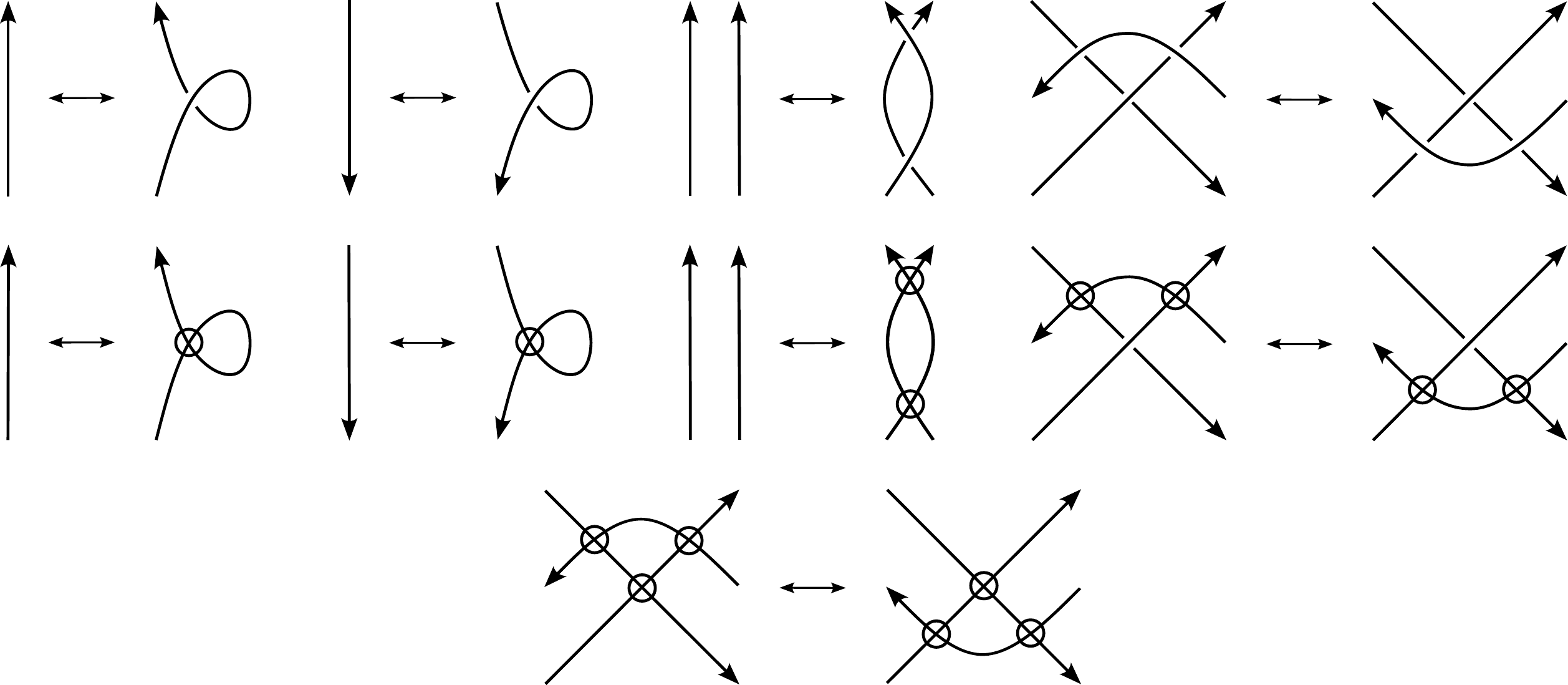}

\put(15,178){$\Omega1a$}
\put(118,178){$\Omega1b$}
\put(234,178){$\Omega2a$}
\put(380,178){$\Omega3a$}

\put(15,105){$v1a$}
\put(118,105){$v1b$}
\put(234,105){$v2a$}
\put(379,105){$cv3a$}

\put(230,32){$mv3a$}

\put(43,100){$\alpha$}

\put(145,100){$\alpha$}

\put(259,82){$\alpha$}
\put(259,118){$\alpha$}

\put(310,114){$\alpha$}
\put(358,114){$\alpha$}
\put(412,86){$\alpha$}
\put(460,86){$\alpha$}

\put(164,41){$\alpha$}
\put(213,41){$\alpha$}
\put(188,15){$\beta$}
\put(266,13){$\alpha$}
\put(315,13){$\alpha$}
\put(291,38){$\beta$}

\end{overpic}
\caption{A generating set of Reidemeister moves for oriented multi-virtual links.}\label{Fg:Minimalmoves}
\end{figure}

\subsection{New types of virtual crossings}

It can certainly happen that two multi-virtual link diagrams are equivalent even if they use virtual crossings of different types, for instance when they are both equivalent to the unknot, as illustrated in Figure \ref{Fg:TwoUnknots}. 

\begin{figure}[ht]
\centering
\begin{overpic}[width = 0.3\textwidth]{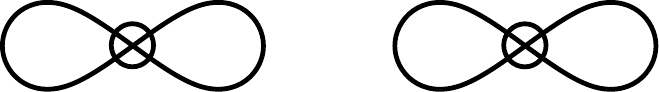}
\put(25,-5){$\alpha$}
\put(109,-5){$\beta$}
\end{overpic}
\caption{Two unknots.}\label{Fg:TwoUnknots}
\end{figure}

Note that new types of virtual crossings can be introduced to a diagram by the $v1$ move. The next result states that while determining the equivalence of two multi-virtual link diagrams, it is not necessary to introduce virtual crossings of types that are not yet present in at least one of the diagrams.

For a multi-virtual link diagram $D$, denote by $T(D)\subseteq T$ the set of all types that occur in $D$.

\begin{theorem}\label{Th:NoNewTypes}
Let $D_1$ and $D_2$ be equivalent (oriented) multi-virtual link diagrams. Then there exists a sequence $E_1,\dots,E_n$ of (oriented) multi-virtual link diagrams such that
\begin{enumerate}
    \item[(i)] $D_1=E_1$ and $D_2=E_n$,
    \item[(ii)] for every $1\le i<n$, $E_{i+1}$ is obtained from $E_i$ by a (oriented) multi-virtual Reidemeister move and isotopy, and
    \item[(iii)] for every $1\le i\le n$, $T(E_i)\subseteq T(D_1)\cup T(D_2)$.
\end{enumerate}
\end{theorem}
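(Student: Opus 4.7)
The plan is to use a type-collapsing projection. By Theorem \ref{Th:Basis} (or Theorem \ref{Th:OrientedBasis} in the oriented case), there exists a sequence $F_1=D_1,F_2,\dots,F_n=D_2$ of multi-virtual link diagrams in which consecutive terms differ by a single (oriented) multi-virtual Reidemeister move or an isotopy. The intermediate $F_i$ may carry virtual crossings whose types lie outside $T(D_1)\cup T(D_2)$, introduced by $v1$ or $v2$ moves (or their oriented counterparts), and the task is to relabel these away while preserving every move.

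Assume first that $T(D_1)\cup T(D_2)\neq\emptyset$ and fix some $\alpha_0\in T(D_1)\cup T(D_2)$. Define $\pi\colon T\to T(D_1)\cup T(D_2)$ to be the identity on $T(D_1)\cup T(D_2)$ and equal to $\alpha_0$ elsewhere. For a multi-virtual diagram $F$, let $\pi(F)$ be obtained from $F$ by relabeling every virtual crossing of type $\gamma$ as one of type $\pi(\gamma)$, and set $E_i:=\pi(F_i)$. Condition (i) is then immediate since $\pi$ fixes the types appearing in $D_1$ and $D_2$, and condition (iii) holds by construction.

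For (ii), I would check that each move or isotopy from $F_i$ to $F_{i+1}$ descends to a valid move or isotopy from $E_i$ to $E_{i+1}$. Isotopies and classical Reidemeister moves are unaffected by $\pi$. The moves $v1$, $v2$, $cv3$ (and their oriented analogues $v1a$, $v1b$, $v2a$, $cv3a$) each involve virtual crossings of a single type $\alpha$, so both sides of the move descend to the same move of type $\pi(\alpha)$. The only move featuring two parametric types is $mv3$ (resp.\ $mv3a$), whose types $\alpha$ and $\beta$ become $\pi(\alpha)$ and $\pi(\beta)$ after projection; if these remain distinct the move persists verbatim, and if they coincide we obtain a pure virtual Reidemeister 3 move with all three crossings of the same type, which is precisely the instance $\alpha=\beta$ of $mv3$ (resp.\ $mv3a$) and therefore a valid multi-virtual Reidemeister move.

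The remaining edge case $T(D_1)\cup T(D_2)=\emptyset$—in which $D_1$ and $D_2$ are purely classical diagrams and condition (iii) forbids any virtual crossings in any $E_i$—reduces, after an auxiliary single-type projection, to the statement that two classical link diagrams equivalent as virtual links are equivalent as classical links. This is the classical–virtual equivalence theorem of Goussarov–Polyak–Viro (also due to Kuperberg), which we invoke to close the argument. The main subtlety in writing this up is confirming that the $\alpha=\beta$ specialization of the $mv3$ (resp.\ $mv3a$) move is really accepted as a multi-virtual Reidemeister move; once that is granted, the verification is essentially bookkeeping, with each move inspected once.
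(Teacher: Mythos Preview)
Your proposal is correct and, for the principal case $T(D_1)\cup T(D_2)\neq\emptyset$, it follows the same idea as the paper: relabel every extraneous type by a fixed type already present. The paper phrases this via the detour move (``a detour of type $\alpha$ can equally well be performed with type $\beta$''), while you define an explicit projection $\pi$ and verify move by move; the $mv3$ check with $\pi(\alpha)=\pi(\beta)$ that you flag as the main subtlety is indeed the only nontrivial point, and it goes through since the generating moves in Figures~\ref{Fg:NonclassicalRmoves} and~\ref{Fg:Minimalmoves} do not require the parametric types to be distinct.

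For the degenerate case $T(D_1)\cup T(D_2)=\emptyset$ the two arguments diverge. You collapse to a single virtual type and then invoke the Goussarov--Polyak--Viro/Kuperberg theorem that classical links equivalent as virtual links are classically equivalent. The paper instead argues directly: the Wirtinger-presented fundamental group together with the peripheral subgroup (longitudes and meridians) is a multi-virtual invariant because virtual crossings are ignored in the presentation, and Waldhausen's theorem then forces classical equivalence. Your approach is more modular, outsourcing the work to a standard result in virtual knot theory; the paper's is more self-contained within the manuscript but appeals to a deep 3-manifold result. The content is ultimately the same, since the Waldhausen/peripheral-system argument is one of the standard proofs of the fact you cite.
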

\begin{proof}
Suppose first that $T(D_1)\cup T(D_2)\ne\emptyset$. Since $D_1$ is equivalent to $D_2$, it is possible to transform $D_1$ into $D_2$ by a finite sequence of multi-virtual Reidemeister moves and isotopy. Suppose that in that sequence a new type $\alpha\not\in T(D_1)\cup T(D_2)$ is introduced. Let $\beta$ be any element of $T(D_1)\cup T(D_2)\ne\emptyset$. We claim that the sequence obtained from the original sequence by replacing every occurrence of $\alpha$ by $\beta$ is a valid sequence of multi-virtual Reidemeister moves for the equivalence argument in question. Indeed, the only way in which $\alpha$ essentially comes into play is in a multi-virtual detour move where several consecutive virtual crossings of type $\alpha$ are excised and replaced with an arc containing only virtual crossings of type $\alpha$. Such a move can certainly be performed with $\beta$ in place of $\alpha$.

Now suppose that $T(D_1)\cup T(D_2)=\emptyset$ so that $D_1$ and $D_2$ are classical link diagrams that are equivalent as multi-virtual link diagrams. Associated with any classical link diagram $K$ is the Wirtinger presented fundamental group $\pi(K)$ and a longitude elements $\lambda(K)\in\pi(K).$ The longitudes are expressed as products of meridianal generators of $\pi(K)$, obtained by walking along each component of the diagram $D$ and writing, in order of encounter, the names of the arcs that are passed under in the walk, with the name written with exponent $1$ if the overpass goes to the right and $-1$ if the overpass goes to the left. This method produces a longitude element of the fundamental group for each component of the link $K$. A result of Waldhausen \cite{Wal} states that the isotopy class of a classical link is determined by its fundamental group and the so-called \emph{peripheral subgroup}, which is the normal subgroup of the fundamental group generated by the longitudes and one choice of meridian generator for each link component. 

Note that the Wirtinger presentation for a fundamental group of a classical link is extended to multi-virtual diagrams by ignoring all virtual crossings. It is then clear that both the fundamental group and the peripheral subgroup are preserved under multi-virtual isotopy. Therefore, if $D_1$ and $D_2$ are equivalent as multi-virtual links then $D_1$ and $D_2$ have isomorphic fundamental groups and peripheral subgroups. By the result of Waldhausen, $D_1$ and $D_2$ are equivalent as classical links. Hence there exists the desired sequence $D_1=E_1,E_2,\dots,E_n=D_2$ such that $T(E_i)=\emptyset$ for every $1\le i\le n$.
\end{proof}

The following result is an immediate consequence of Theorem \ref{Th:NoNewTypes}.

\begin{corollary}\label{Cr:NoNewTypes}
Let $D_1$, $D_2$ be classical (resp. virtual) link diagrams that are equivalent as multi-virtual link diagrams. Then they are equivalent as classical (resp. virtual) link diagrams.
\end{corollary}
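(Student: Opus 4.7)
The plan is to derive the corollary as a direct specialization of Theorem \ref{Th:NoNewTypes}, by examining what happens to the intermediate diagrams $E_1,\ldots,E_n$ in the two cases.

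For the classical case, I would start by observing that if $D_1$ and $D_2$ are classical link diagrams then $T(D_1)=T(D_2)=\emptyset$, so $T(D_1)\cup T(D_2)=\emptyset$. Theorem \ref{Th:NoNewTypes} then produces a sequence $D_1=E_1,\ldots,E_n=D_2$ with $T(E_i)\subseteq\emptyset$ for every $i$, meaning each $E_i$ has no virtual crossings at all, i.e.\ is a classical link diagram. Since no intermediate diagram contains a virtual crossing, the only multi-virtual Reidemeister moves from Theorem \ref{Th:Basis} (resp.\ Theorem \ref{Th:OrientedBasis}) that can possibly occur between consecutive $E_i$'s are the classical ones; the moves $v1$, $v2$, $cv3$, $mv3$ (resp.\ $v1a,v1b,v2a,cv3a,mv3a$) all introduce or involve a virtual crossing of some type, so none of them is applicable. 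Thus consecutive $E_i$'s are related by isotopy and classical Reidemeister moves, showing $D_1$ and $D_2$ are equivalent as classical link diagrams.

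For the virtual case, the point is that virtual knot theory uses exactly one kind of virtual crossing; choosing an identification of this single kind with a type $\alpha\in T$, virtual link diagrams correspond precisely to multi-virtual diagrams $D$ with $|T(D)|\le 1$. Hence $|T(D_1)\cup T(D_2)|\le 1$, and Theorem \ref{Th:NoNewTypes} yields a sequence $E_1,\ldots,E_n$ each of whose type sets has size at most one, so each $E_i$ is a virtual link diagram. To finish, I would note that when restricted to a single type, the multi-virtual Reidemeister moves of Theorem \ref{Th:Basis} (resp.\ Theorem \ref{Th:OrientedBasis}) reduce exactly to the ordinary (oriented) virtual Reidemeister moves: $v1,v2$ and $v3$ become the usual virtual $R1,R2,R3$ moves, $cv3$ becomes the usual mixed move, and the $mv3$ move becomes a purely virtual $R3$ move when $\alpha=\beta$. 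Consequently the steps between consecutive $E_i$'s are valid virtual Reidemeister moves, and $D_1$ and $D_2$ are equivalent as virtual link diagrams.

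There is no real obstacle here; the only point requiring a sentence of justification is the collapse of multi-virtual moves to the standard classical or virtual moves when all types involved coincide (or are absent). The proof is therefore essentially two invocations of Theorem \ref{Th:NoNewTypes}, one with $T(D_1)\cup T(D_2)=\emptyset$ and one with $|T(D_1)\cup T(D_2)|\le 1$, together with this identification of move sets.
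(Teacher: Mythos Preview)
Your argument is correct and matches the paper, which simply records the corollary as an immediate consequence of Theorem~\ref{Th:NoNewTypes}: specialize to $T(D_1)\cup T(D_2)=\emptyset$ (classical) or $\subseteq\{\alpha\}$ (virtual) and observe that the admissible moves then reduce to the classical, respectively virtual, Reidemeister moves. The only slip is cosmetic: there is no separate move called $v3$ in the paper's generating set---the purely virtual $R3$ move is exactly $mv3$ with $\alpha=\beta$, as you yourself point out.
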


\subsection{The classical crossing number and the total crossing number}\label{Ss:Family}

For a multi-virtual link $L$, let the \emph{classical crossing number} $\mathrm{cr}_c(L)$ be the smallest number of classical crossings among all diagrams of $L$, and let the \emph{total crossing number} $\mathrm{cr}_t(L)$ be the smallest number of all (classical and virtual) crossings among all diagrams of $L$. If $\mathrm{cr}_c(L)=n$, we will say that $L$ \emph{has $n$ classical crossings}, and if $\mathrm{cr}_t(L)=n$, we will say that $L$ \emph{has $n$ crossings}.

The tables of classical knots are usually organized by the classical crossing number. The table \cite{Gre} of virtual knots is likewise organized by the classical crossing number. Implicit in the cataloging method of \cite{Gre} is the fact that, up to equivalence, there are only finitely many virtual knots with $n$ classical crossings---see Corollary \ref{Cr:VirtualGraph}.

This is not true for multi-virtual knots, even if $|T|=2$, as witnessed by the infinite family of pairwise nonequivalent multi-virtual knots with a single classical crossing constructed in Subsection \ref{Ss:SingleClassical}. The construction appears late in the paper since we will first need to develop the operator quandle invariant in order to distinguish the knots.

Nevertheless, it remains true that, up to equivalence, there are only finitely many multi-virtual knots with $n$ crossings, \emph{provided} that the set $T$ of types is finite---see Proposition \ref{Pr:MVBound}. We shall see in Subsection \ref{Ss:SameVirtual} that the equivalence problem for two multi-virtual links with $n$ virtual crossings each can be fully resolved by considering at most $2n$ types at a time. It is therefore safe to catalog multi-virtual knots by their total crossing number.

\begin{prop}\label{Pr:MVBound}
Let $T$ be a finite set of types. Then, up to equivalence, there are only finitely many multi-virtual knots of type $T$ with $n$ crossings.
\end{prop}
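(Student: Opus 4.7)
The plan is to show that the set of multi-virtual link diagrams with exactly $n$ crossings, taken up to planar isotopy, is finite, and then invoke this to bound the number of equivalence classes of multi-virtual knots with $n$ crossings.

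First I would strip each diagram down to its \emph{shadow}, that is, the underlying 4-regular plane graph on $n$ vertices obtained by forgetting the over/under and type information at every crossing. Up to ambient isotopy of $S^2$, there are only finitely many 4-regular plane graphs on $n$ vertices: indeed, by Euler's formula such a graph has $n$ vertices, $2n$ edges and $n+2$ faces, and the number of combinatorial embeddings (equivalently, rotation systems) is bounded by a function of $n$ alone. Hence the number of shadows with $n$ crossings is bounded by some finite quantity $S(n)$.

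Next I would recover a diagram from its shadow by labeling each of the $n$ vertices with one of finitely many decorations: at each crossing we either declare it classical (choosing one of the two possible over/under configurations) or virtual (choosing one of the $|T|$ types). Because $|T|$ is finite by hypothesis, there are at most $(2+|T|)^n$ such labelings for a fixed shadow, and so the total number of multi-virtual link diagrams with $n$ crossings is at most $S(n)(2+|T|)^n$, which is finite. Restricting to diagrams with a single component cuts this down further but does not affect finiteness.

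Finally, every multi-virtual knot $K$ with $\mathrm{cr}_t(K)=n$ admits, by definition, at least one diagram with $n$ crossings. Since there are only finitely many such diagrams, the assignment sending each such diagram to its equivalence class under the multi-virtual Reidemeister moves (Theorem \ref{Th:Basis}) produces a finite set of equivalence classes containing every multi-virtual knot with $n$ crossings. This proves the proposition.

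The only nontrivial step is the first one — the finiteness of 4-regular plane graphs on $n$ vertices up to isotopy — but this is a standard consequence of Euler's formula together with the fact that each such graph admits only finitely many combinatorial embeddings; everything else is a direct counting argument driven by the finiteness of $T$.
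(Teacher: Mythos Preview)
Your argument is correct and follows essentially the same route as the paper: pass to the shadow, observe that there are only finitely many $4$-regular plane graphs on $n$ vertices, and then multiply by $(2+|T|)^n$ for the crossing decorations. The paper's proof is slightly terser (it simply asserts the finiteness of connected $4$-regular planar graphs on $n$ vertices rather than invoking Euler's formula and rotation systems), but the structure and the key count $(2+|T|)^n$ are identical.
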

\begin{proof}
Let $D$ be a diagram of a multi-virtual knot $K$. Suppose that there are $n$ crossings in $D$ (so that $\mathrm{cr}_t(K)\le n$). The shadow $G$ of $D$ is a connected $4$-regular planar graph with $n$ vertices. Up to planar isotopy, the number of connected graphs with $n$ vertices is finite. Given a connected $4$-regular planar graph with $n$ vertices, there are at most $(2+|T|)^n$ multi-virtual knot diagrams $D$ with shadow $G$, since for every vertex we must decide whether the corresponding crossing is classical (overpass or underpass) or virtual of a type from $T$.
\end{proof}

\begin{corollary}\label{Cr:MVBound}
Let $(K_1,K_2,\dots)$ be an infinite family of pairwise nonequivalent multi-virtual knots. Then the sequence $(\mathrm{cr}_t(K_1),\mathrm{cr}_t(K_2),\dots)$ is not bounded.
\end{corollary}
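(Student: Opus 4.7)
The plan is to argue by contrapositive, reducing the statement directly to Proposition~\ref{Pr:MVBound}. Suppose, toward a contradiction, that the sequence $(\mathrm{cr}_t(K_i))_{i\ge 1}$ is bounded, say $\mathrm{cr}_t(K_i)\le n$ for every $i$. Then each $K_i$ lies in the set $\mathcal{K}_{\le n}$ of equivalence classes of multi-virtual knots of type $T$ with at most $n$ crossings, and my goal will be to show that $\mathcal{K}_{\le n}$ is finite, whence an infinite subfamily of pairwise inequivalent knots cannot exist.

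To do this I would write $\mathcal{K}_{\le n}=\bigcup_{k=0}^{n}\mathcal{K}_k$, where $\mathcal{K}_k$ denotes the set of equivalence classes with exactly $k$ crossings, and then invoke Proposition~\ref{Pr:MVBound} once for each value of $k\in\{0,1,\dots,n\}$ to conclude that every $\mathcal{K}_k$ is finite. A finite union of finite sets is finite, so $\mathcal{K}_{\le n}$ is finite. This step implicitly requires the standing hypothesis that the type set $T$ is finite (the same hypothesis under which Proposition~\ref{Pr:MVBound} is stated, and which enters through the bound $(2+|T|)^n$ in its proof); I would flag this explicitly so the reader sees that no new input is needed.

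The argument is then completed by the pigeonhole principle: the infinite family $(K_i)_{i\ge 1}$ cannot be a subset of the finite set $\mathcal{K}_{\le n}$ without repetition, contradicting pairwise inequivalence. There is no real obstacle here, since the corollary is essentially a direct contrapositive reformulation of Proposition~\ref{Pr:MVBound}; the only mild bookkeeping point is the passage from ``exactly $n$'' crossings (as in the proposition) to ``at most $n$'' crossings (as needed here), handled by the finite union above.
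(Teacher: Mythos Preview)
Your argument is correct and is exactly the intended one: the paper gives no explicit proof of this corollary, treating it as an immediate consequence of Proposition~\ref{Pr:MVBound}, and your contrapositive via the finite union $\bigcup_{k=0}^{n}\mathcal{K}_k$ is the natural way to fill in the details. Your remark that the finiteness of $T$ is being tacitly inherited from Proposition~\ref{Pr:MVBound} is also apt; the corollary is placed immediately after that proposition and is meant to be read under the same standing hypothesis.
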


\begin{figure}[!ht]
\centering
\begin{overpic}[width = 0.4\textwidth]{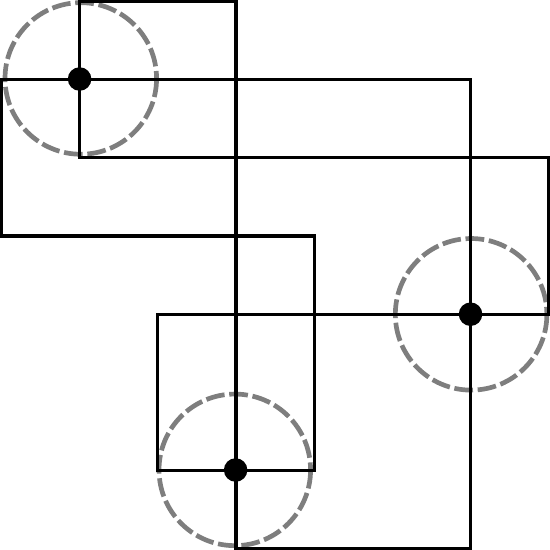}
\put(98,30){$c$}
\put(72,43){$d$}
\put(57,20){$e$}
\put(83,7){$f$}
\put(178,84){$b$}
\put(152,98){$a$}
\put(137,73){$e$}
\put(162,60){$f$}
\put(45,163){$a$}
\put(19,176){$d$}
\put(5,153){$c$}
\put(30,138){$b$}
\end{overpic}
\caption{A virtual knot as a $4$-regular graph. The dots represent classical crossings. Virtual crossings arise at the remaining intersections of the arcs.}\label{Fg:VirtualGraph}
\end{figure}

We will now show that for a virtual knot, the total crossing number is bounded above as a function of the classical crossing number. We give a short proof since we do not aim for the best possible upper bound.

\begin{prop}\label{Pr:VirtualGraphUB}
Let $K$ be a virtual knot with $\mathrm{cr}_c(K)=n$. Then $\mathrm{cr}_t(K)\le 6n^2-2n$.
\end{prop}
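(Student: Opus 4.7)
The plan is to take any diagram $D$ of $K$ realizing $\mathrm{cr}_c(K) = n$, extract its Gauss code, and re-realize the Gauss code as a standard-form diagram $D'$ with the same $n$ classical crossings but a controlled number of virtual crossings. Since virtual knots are determined by their Gauss codes, $D'$ is another diagram of $K$, and the total crossing number of $K$ is bounded above by the total crossing count of $D'$.

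Concretely, I would place the $n$ classical crossings at $n$ distinct points of the plane in general position (for instance, equally spaced along a horizontal line), fix at each crossing four outgoing tangent directions together with the over/under assignment dictated by $D$, and draw the $2n$ arcs of the Gauss code as simple smooth curves matching these tangent directions at their endpoints. A concrete recipe is to use a straight segment between each pair of endpoints, adjusted within a small neighborhood of each endpoint by a ``tangent-matching hook'' that turns the segment's direction into the prescribed one.

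The central geometric claim is that these arcs can be drawn so that any two of them cross transversely at most $3$ times: at most once between the straight bodies, and at most once inside each of the small endpoint neighborhoods where a hook of one arc meets the body or hook of the other. Granting this, there are $\binom{2n}{2}=n(2n-1)$ unordered pairs of arcs, contributing at most $3n(2n-1) = 6n^2 - 3n$ virtual crossings in $D'$. Adding the $n$ classical crossings yields $\mathrm{cr}_t(K) \le n + 6n^2 - 3n = 6n^2 - 2n$, as required.

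The main obstacle is the verification of the three-intersections-per-pair estimate, particularly for pairs of arcs that share one or both classical crossings as endpoints, since in that case the tangent-matching hooks of several arcs all live inside the same small neighborhood and their interactions must be tracked carefully. A case split according to the number of shared endpoints (zero, one, or two), combined with a suitable choice of hook sizes and orientations, should yield the bound. Because the proposition does not aim for a sharp inequality, a loose geometric estimate is sufficient, and the essential point is that each pair of arcs can be localized so that only a bounded (in fact, absolute-constant) number of virtual crossings arises from it.
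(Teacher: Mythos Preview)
Your overall strategy coincides with the paper's: redraw the knot so that the $n$ classical crossings are fixed vertices, the $2n$ arcs of the Gauss code become $2n$ connecting curves, and any two curves meet at most $3$ times, yielding $3\binom{2n}{2}+n=6n^2-2n$. The only substantive difference is in the geometric realization that secures the ``at most three intersections'' bound, and here the paper's construction is cleaner than yours.

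The paper places the $n$ vertices so that no two share a horizontal or vertical coordinate, emits the four half-edges at each vertex in the four cardinal directions, and connects matching half-edges by an orthogonal polyline of at most three axis-parallel segments that is \emph{monotone in both coordinates} (it never reverses horizontal or vertical direction). The three-intersection bound then drops out with no case analysis: each segment of one curve is horizontal or vertical, and the other curve, being $y$-monotone and $x$-monotone respectively, meets any horizontal or vertical line at most once; hence at most three hits total. By contrast, your straight-segment-plus-hooks picture needs the very case split you flag as the ``main obstacle.'' In particular, your suggested placement ``equally spaced along a horizontal line'' makes all bodies collinear, and even in general position two arcs sharing both endpoints (a multi-edge, which certainly occurs in Gauss codes) would have coincident bodies. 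These are fixable, but the orthogonal bi-monotone construction sidesteps all of it and delivers the bound in one line.
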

\begin{proof}
In one possible interpretation, a virtual knot is a connected $4$-regular graph $G$ that is drawn in the plane but is not necessarily planar. The vertices of $G$ correspond to classical crossings and the intersections of edges correspond to virtual crossings. Figure \ref{Fg:VirtualGraph} illustrates a construction of all virtual knots. Place $n$ vertices in the plane so that no two vertices lie on the same vertical or horizontal line. At each vertex $v$, indicate the four edges at $v$ by drawing four short labeled line segments emanating from $v$ in the four cardinal directions, keeping the short line segments within a small neighborhood of $v$. Connecting any pair of matching (that is, with the same label) short line segments from the edge of one neighborhood to the edge of the other neighborhood can be done with a curve consisting of at most $3$ line segments oriented in the cardinal directions and such that the direction of travel is never reversed (that is, it is not necessary to travel to the left and later to the right, or up and later down). As an illustration, consider the curve connecting the two line segments labeled $c$ in Figure \ref{Fg:VirtualGraph}. By shrinking the neighborhoods as needed, we can further arrange the situation so that the connecting curves avoid all vertex neighborhoods. Any two of the $2n$ connecting curves intersect at most $3$ times, since each curve consists of at most $3$ line segments in cardinal directions and the direction of travel is not reversed. Hence there are at most $0+3+2\cdot 3 + \cdots + (2n-1)\cdot 3 = 3n(2n-1)=6n^2-3n$ intersections. Adding the original $n$ vertices (classical crossings), we have our result.
\end{proof}

\begin{problem}
Improve the upper bound in Proposition \ref{Pr:VirtualGraphUB}.
\end{problem}

Combining Propositions \ref{Pr:MVBound} and \ref{Pr:VirtualGraphUB}, we obtain:

\begin{corollary}\label{Cr:VirtualGraph}
Up to equivalence, there are only finitely many virtual knots with $n$ classical crossings.
\end{corollary}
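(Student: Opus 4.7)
The plan is to treat a virtual knot as a multi-virtual knot whose type set $T$ is a singleton, and then apply the two preceding propositions in sequence. Concretely, I would fix $n$ and let $K$ be any virtual knot with $\mathrm{cr}_c(K) = n$; by Proposition \ref{Pr:VirtualGraphUB} we already know $\mathrm{cr}_t(K) \le 6n^2 - 2n$. So every virtual knot with exactly $n$ classical crossings lies in the union, over $m \in \{0, 1, \dots, 6n^2 - 2n\}$, of the sets $V_m$ of equivalence classes of virtual knots with total crossing number equal to $m$.

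Next I would invoke Proposition \ref{Pr:MVBound} with the singleton type set $T = \{\alpha\}$: since $|T| = 1 < \infty$, each $V_m$ is finite. A finite union of finite sets is finite, so the set of equivalence classes of virtual knots with $\mathrm{cr}_c = n$ is finite, which is exactly the claim.

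Two small sanity checks are worth flagging. First, one should verify that the equivalence relation on virtual knots agrees with the restriction to $T=\{\alpha\}$ of the equivalence relation on multi-virtual knots; this follows from Corollary \ref{Cr:NoNewTypes} (so no broadening of the equivalence relation occurs when we view virtual knots inside multi-virtual knot theory). Second, Proposition \ref{Pr:MVBound} as stated counts knots with exactly $n$ crossings; hence the explicit finite union over $m$ is needed rather than a single application.

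I do not expect a real obstacle here: the argument is essentially bookkeeping, and both ingredients are already proved. The only mild subtlety is noticing that Proposition \ref{Pr:MVBound} gives finiteness at each fixed total crossing number rather than an overall bound, which is why the upper bound $6n^2 - 2n$ from Proposition \ref{Pr:VirtualGraphUB} is invoked to cap the range of values of $m$ that need to be summed.
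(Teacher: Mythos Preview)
Your proposal is correct and follows essentially the same approach as the paper, which simply states that the corollary is obtained by combining Propositions \ref{Pr:MVBound} and \ref{Pr:VirtualGraphUB}. Your write-up is in fact more careful than the paper's one-line justification: you make explicit the finite union over total crossing numbers and you flag (via Corollary \ref{Cr:NoNewTypes}) that the virtual and multi-virtual equivalence relations agree on virtual diagrams, a point the paper leaves implicit.
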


In the following example, we construct an infinite family of pairwise nonequivalent multi-virtual links with only two types of virtual crossings and with no classical crossings. See Subsection \ref{Ss:SingleClassical} for an infinite family of pairwise nonequivalent multi-virtual knots, each with a single classical crossing.

\def\ll{\left\langle}
\def\rr{\right\rangle}

First, we recall the chromatic bracket polynomial from \cite{Kau2}. To every multi-virtual link diagram $L$ with at most two types of virtual crossings, denoted by $\vcenter{ \hbox{\begin{overpic}[scale=0.2]{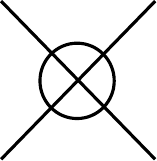}
    \end{overpic}}}$ and $\vcenter{ \hbox{\begin{overpic}[scale=0.2]{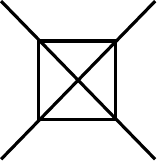}
    \end{overpic}}},$ we associate a Laurent polynomial $\ll L \rr \in \mathbb{Z}[A^{\pm1}]$ obtained by the skein relations\smallskip
\begin{enumerate}
\item[(i)]
    $\ll \vcenter{ \hbox{\begin{overpic}[scale=0.2]{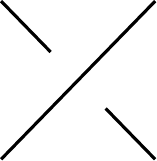}
    \end{overpic}}} \rr = A \ll \vcenter{ \hbox{\begin{overpic}[scale=0.2]{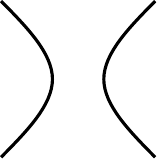}
    \end{overpic}}} \rr + A^{-1} \ll \vcenter{ \hbox{\begin{overpic}[scale=0.2]{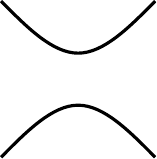}
    \end{overpic}}} \rr ,$\smallskip
\item[(ii)]
    $\ll L \sqcup O \rr = d \ll L \rr,$ where $O$ denotes a trivial component, and\smallskip
\item[(iii)]
    $ \ll \vcenter{ \hbox{\begin{overpic}[scale=0.2]{boxcrossing.pdf}
    \end{overpic}}} \rr = 2 \ll \vcenter{ \hbox{\begin{overpic}[scale=0.2]{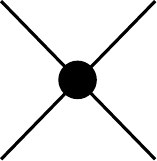}
    \end{overpic}}} \rr - \ll \vcenter{ \hbox{\begin{overpic}[scale=0.2]{circlecrossing.pdf}
    \end{overpic}}} \rr$ (the box-circle relation), \smallskip
\end{enumerate}
where the $\vcenter{ \hbox{\begin{overpic}[scale=0.2]{nodecrossing.pdf}
    \end{overpic}}}$ crossing in relation (iii) is referred to as a \emph{node}. Additionally,
\begin{enumerate}
\item[(iv)] all flat diagrams with only nodes are evaluated as $d = - A^2 - A^{-2}$.
\end{enumerate}
It then follows (see \cite{Kau2}), that the chromatic bracket polynomial is invariant under all multi-virtual Reidemeister moves except the move $\Omega1.$ We proceed to the example.

\begin{example}
Consider the link diagram $L(i,j)$ shown below. The first $i$ pairs of crossings are nodes followed by $j$ pairs of virtual crossings of box type and circle type.
\begin{displaymath}
    L(i,j)= \underbrace{\vcenter{\hbox{
        \begin{overpic}[scale=0.5]{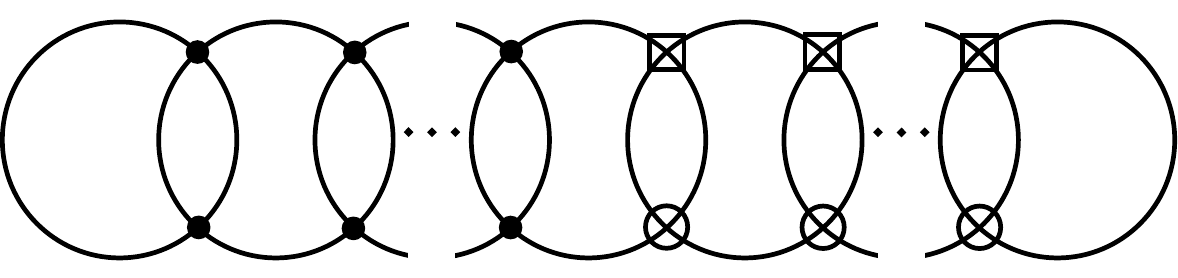}
        \end{overpic}}}}_{\text{$2i$ nodes, $j$ crossings of box type and $j$ crossings of circle type}}
\end{displaymath}
We will compute the chromatic bracket polynomial of the multi-virtual links $L(0,n)$. Using the box-circle relation and the virtual Reidemeister 2 move, we obtain: 
\begin{align*}
    \ll L(0,n)\rr &= 2\ll L(1, n - 1)\rr - d\ll L(0, n - 1)\rr\\
    &= 4\ll L(2,n - 2)\rr - 4d\ll L(1,n - 2)\rr + d^2\ll L(0,n - 2)\rr  \\
    &= 8\ll L(3,n - 3)\rr - 12d\ll L(2,n - 3)\rr + 6d^2\ll L(1,n - 3)\rr - d^3\ll L(0,n - 3)\rr\\
    &\ \,\vdots\\
    &= 2^n\ll L(n,0)\rr + \cdots + (-1)^nd^n\ll L(0,0)\rr\\
    &= 2^n\ll L(n,0)\rr  + \cdots + (-1)^nd^{n+1}.
\end{align*}
Observe that the highest degree term in $\ll L(0,n)\rr$ is $A^{2n + 2}$. This implies that the links in the infinite family $\{L(0,n):n\ge 0\}$ are distinct.
\end{example}

\section{Classifying multi-virtual links with given virtual projections}\label{Sc:Bell}

For a multi-virtual link diagram $D$, let $v(D)$ be the \emph{virtual projection} of $D$, the virtual link diagram obtained from $D$ by forgetting about the type of each virtual crossing. We can similarly define virtual projections of multi-virtual Reidemeister moves. Note that the virtual projection of a multi-virtual Reidemeister move is a virtual Reidemeister move. We therefore have:

\begin{prop}\label{Pr:Projection}
Let $D_1$ and $D_2$ be equivalent multi-virtual link diagrams. Then $v(D_1)$ and $v(D_2)$ are equivalent virtual link diagrams.
\end{prop}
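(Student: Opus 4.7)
The plan is to observe that equivalence of multi-virtual link diagrams is generated by a short list of local moves, and that each such move, after forgetting the type labels on virtual crossings, becomes one of the standard local moves in virtual knot theory. The proposition then follows by induction on the length of a sequence of moves witnessing the equivalence of $D_1$ and $D_2$.

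More concretely, I would first invoke Theorem \ref{Th:Basis}: since $D_1$ and $D_2$ are equivalent, there exists a finite sequence of multi-virtual link diagrams $D_1 = F_0, F_1, \dots, F_k = D_2$ such that each $F_{i+1}$ is obtained from $F_i$ by isotopy, a classical Reidemeister move, or one of the multi-virtual Reidemeister moves $v1$, $v2$, $cv3$, $mv3$ from Figure \ref{Fg:NonclassicalRmoves}. Applying $v$ to the entire sequence produces a corresponding sequence $v(F_0), v(F_1), \dots, v(F_k)$ of virtual link diagrams, so it suffices to verify, case by case, that $v(F_i)$ and $v(F_{i+1})$ differ by a single virtual Reidemeister move (or by isotopy). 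Classical Reidemeister moves and isotopy involve no virtual crossings at all and project to themselves. The $v1$ move, which inserts or removes a loop with one virtual crossing of some type $\alpha$, projects to the ordinary virtual Reidemeister $1$ move. The $v2$ move, involving two adjacent virtual crossings both of the same type $\alpha$, projects to the virtual Reidemeister $2$ move. Likewise, $cv3$ projects to the classical-virtual mixed Reidemeister $3$ move, and $mv3$ projects to the purely virtual Reidemeister $3$ move; in both cases the projection simply erases the common label $\alpha$ of the virtual crossings involved.

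With the case analysis complete, induction on $k$ gives that $v(D_1) = v(F_0)$ and $v(D_2) = v(F_k)$ are connected by a finite sequence of virtual Reidemeister moves and isotopy, hence are equivalent as virtual link diagrams. The only potential subtlety is making sure that no multi-virtual Reidemeister move uses virtual crossings of mixed types in a way that would have no virtual analogue after projection; but each of $v1$, $v2$, $cv3$, $mv3$ involves virtual crossings of a single type $\alpha$ by definition, so this issue does not arise, and the proof is essentially a bookkeeping verification of the remark already made just before the statement of the proposition.
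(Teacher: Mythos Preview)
Your argument is correct and follows exactly the approach the paper intends: the proposition is stated immediately after the observation that the virtual projection of a multi-virtual Reidemeister move is a virtual Reidemeister move, and your proof simply unpacks that observation via Theorem~\ref{Th:Basis}.

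One small inaccuracy worth fixing: in your closing remark you assert that ``each of $v1$, $v2$, $cv3$, $mv3$ involves virtual crossings of a single type $\alpha$ by definition.'' This is not true of $mv3$, which involves two virtual crossings of type $\alpha$ and one of type $\beta$ (see Figure~\ref{Fg:NonclassicalRmoves}). Fortunately this does not matter for your argument: once all type labels are erased, $mv3$ still projects to the ordinary virtual Reidemeister~3 move with three virtual crossings, which is a valid move in virtual knot theory. You had already said this correctly earlier in the paragraph, so just delete or amend the final sentence.
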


By the contrapositive of Proposition \ref{Pr:Projection}, if $v(D_1)$ and $v(D_2)$ are not equivalent, then $D_1$ and $D_2$ are not equivalent. But it is a nontrivial question to decide whether $D_1$ and $D_2$ are equivalent when $v(D_1)$ and $v(D_2)$ are equivalent or even the same. For instance:

\begin{problem}
Given three distinct types $\alpha,\beta,\gamma$, are the two multi-virtual trefoils in Figure \ref{Fg:Trefoils} equivalent? (The virtual trefoil on the left in Figure \ref{Fg:Trefoils} is of course equivalent to the unknot, so we are asking if the multi-virtual trefoil with three distinct types of virtual crossings is equivalent to the unknot.)
\end{problem}

\begin{figure}[!ht]
\centering
\begin{overpic}[width = 0.6\textwidth]{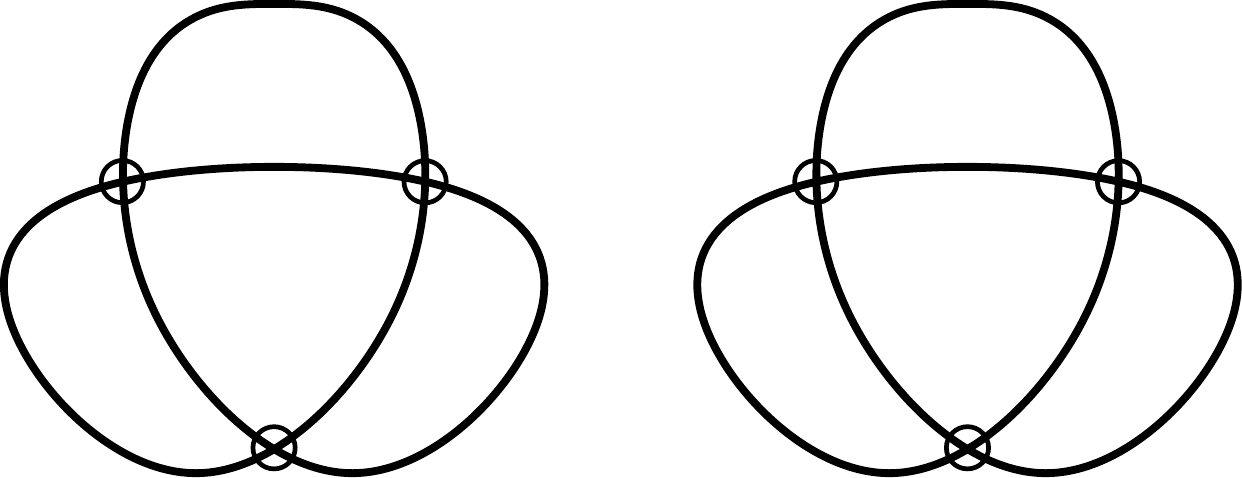}
\put(58,16){$\alpha$}
\put(33,55){$\alpha$}
\put(84,55){$\alpha$}
\put(191,55){$\alpha$}
\put(240,53){$\beta$}
\put(216,18){$\gamma$}
\end{overpic}
\caption{Are these multi-virtual trefoils equivalent?}\label{Fg:Trefoils}
\end{figure}

Let us now consider the inverse image of the mapping $v$. For a virtual link diagram $V$, let 
\begin{displaymath}
    m(V) = \{D:v(D) = V\} = v^{-1}(V)
\end{displaymath}
be the set of all multi-virtual link diagrams whose virtual projection is equal to $V$. We would like to understand the elements of $m(V)$ up to equivalence.

\subsection{Permutational equivalence of multi-virtual link diagrams}

Let $f:T\to T$ be a bijection and let $D$ be a multi-virtual link diagram. Then $f(D)$ is the diagram obtained from $D$ by renaming all types of virtual crossings according to $f$. In this context, $f$ will be referred to as a \emph{retyping}.

We say that two multi-virtual link diagrams are \emph{permutationally equivalent} if one is obtained from the other by a finite sequence of retypings, isotopy and multi-virtual Reidemeister moves.

It is not difficult to see that a retyping followed by a multi-virtual Reidemeister move can also be accomplished by a suitable multi-virtual Reidemeister move followed by a retyping. Hence two multi-virtual link diagrams $D_1$ and $D_2$ are permutationally equivalent if and only if there exists a bijection $f:T\to T$ such that $f(D_1)$ are $D_2$ are equivalent.

Clearly, any two equivalent diagrams are also permutationally equivalent. The converse does not hold in general but it might be difficult to establish this in a concrete case. 

\subsection{Multi-virtual link diagrams with the same virtual projection}\label{Ss:SameVirtual}

Let us return to the question of classifying all multi-virtual diagrams $m(V)$ of a given virtual diagram $V$ up to equivalence and up to permutational equivalence.

Let $B_n$ be the $n$th Bell number, the number of partitions of an $n$-element set. Recall that $B_0=0$ and $B_{n+1}=\sum_{k=0}^n\binom{n}{k}B_k$ for every $n\ge 0$.

\begin{prop}
Let $V$ be a virtual link diagram with $n$ virtual crossings. Then the set $m(V)$ contains at most $B_n$ multi-virtual link diagrams up to permutational equivalence.
\end{prop}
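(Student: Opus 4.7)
The plan is to parameterize the diagrams in $m(V)$ by type assignments on the virtual crossings of $V$, and then to show that the retyping action collapses these assignments down to their kernel partitions. Since permutational equivalence is coarser than retyping alone, the bound $B_n$ will follow immediately.

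Concretely, label the $n$ virtual crossings of $V$ by $1,\dots,n$. Any $D\in m(V)$ is determined uniquely by its type assignment $c_D\colon\{1,\dots,n\}\to T$, and conversely every such function yields an element of $m(V)$. To each $c_D$ associate the partition $\pi(c_D)$ of $\{1,\dots,n\}$ in which $i$ and $j$ lie in the same block if and only if $c_D(i)=c_D(j)$. The key claim is that if $\pi(c_{D_1})=\pi(c_{D_2})$, then $D_1$ and $D_2$ are already permutationally equivalent via a single retyping. Indeed, the common kernel partition makes $c_{D_1}(i)\mapsto c_{D_2}(i)$ a well-defined bijection $g\colon \mathrm{image}(c_{D_1})\to \mathrm{image}(c_{D_2})$, and extending $g$ to a bijection $f\colon T\to T$ by matching the complements $T\setminus\mathrm{image}(c_{D_1})$ and $T\setminus\mathrm{image}(c_{D_2})$ (which have equal cardinality whether $T$ is finite or infinite) produces a retyping with $f(D_1)=D_2$.

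Consequently, the quotient of $m(V)$ under the retyping action has at most as many classes as there are set partitions of $\{1,\dots,n\}$, namely $B_n$. Since permutational equivalence has retypings among its generators, the set of permutational equivalence classes in $m(V)$ is a further quotient, and so also has cardinality at most $B_n$.

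The only subtle point in the argument is the extension of the partial bijection $g$ of the images to a bijection $f$ of all of $T$; this is what forces retypings to be bijections on the whole type set rather than merely on the types occurring in the given diagrams. Everything else is a counting argument, and the main obstacle, such as it is, is simply bookkeeping to ensure that the map from $m(V)/\!\sim_{\text{retyping}}$ to partitions of $\{1,\dots,n\}$ is well-defined and injective.
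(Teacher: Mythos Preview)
Your proof is correct and follows essentially the same approach as the paper's: both arguments observe that a diagram in $m(V)$ determines a partition of the $n$ virtual crossings by type, and that two diagrams inducing the same partition are related by a single retyping. The paper phrases this via a greedy normalization (retyping each diagram to a canonical representative), whereas you explicitly construct the bijection $f\colon T\to T$ and note that permutational equivalence is coarser than retyping, but this is only a presentational difference.
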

\begin{proof}
Let $C=\{c_1,\dots,c_n\}$ be the virtual crossings of $V$. Assigning types to all virtual crossings results in a partition of $C$. The resulting diagram can be retyped greedily by assigning type $\alpha_1$ to $c_1$ and to all virtual crossings of the same type as $c_1$, $\alpha_2$ to the first virtual crossing $c_i$ not yet visited and to all virtual crossings of the same type as $c_i$, and so on. 
\end{proof}

\begin{example}
Consider the virtual figure-eight knot diagram in Figure \ref{Fg:FigureEight}, where the labels $1$, $2$, $3$ are used to keep track of the virtual crossings, not to denote their types.

\begin{figure}[!ht]
\centering
\begin{overpic}[width = 0.15\textwidth]{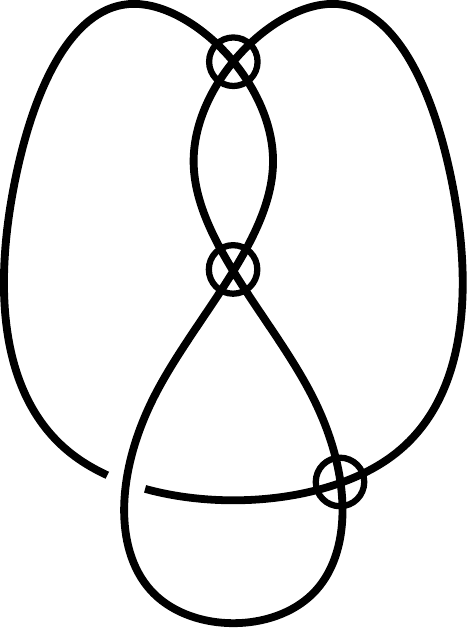}
\put(56,13){$3$}
\put(24,50){$2$}
\put(41,80){$1$}
\end{overpic}
\caption{A virtual figure-8 knot with virtual crossings labeled for reference.}\label{Fg:FigureEight}
\end{figure}

There are five partitions of the $3$-element set $\{1,2,3\}$:
\begin{displaymath}
    \{\{1,2,3\}\},\ \{\{1,2\},\{3\}\},\  \{\{1,3\},\{2\}\},\  \{\{1\},\{2,3\}\}\text{ and } \{\{1\},\{2\},\{3\}\}.
\end{displaymath} 
We can also represent the five partitions as the columns of the array
\begin{displaymath} \left[
    \begin{array}{ccccc}
    1&1&1&1&1\\
    1&1&2&2&2\\
    1&2&1&2&3
    \end{array}\right].
\end{displaymath}
In general, for partitions of an $n$-element set, the columns of the array are all sequences $a_1,\dots,a_n$ such that $a_1=1$ and $a_{i+1}\le \max\{a_1,\dots,a_i\}+1$ for every $1\le i<n$.

\begin{figure}[!ht]
\centering
\begin{overpic}[width = \textwidth]{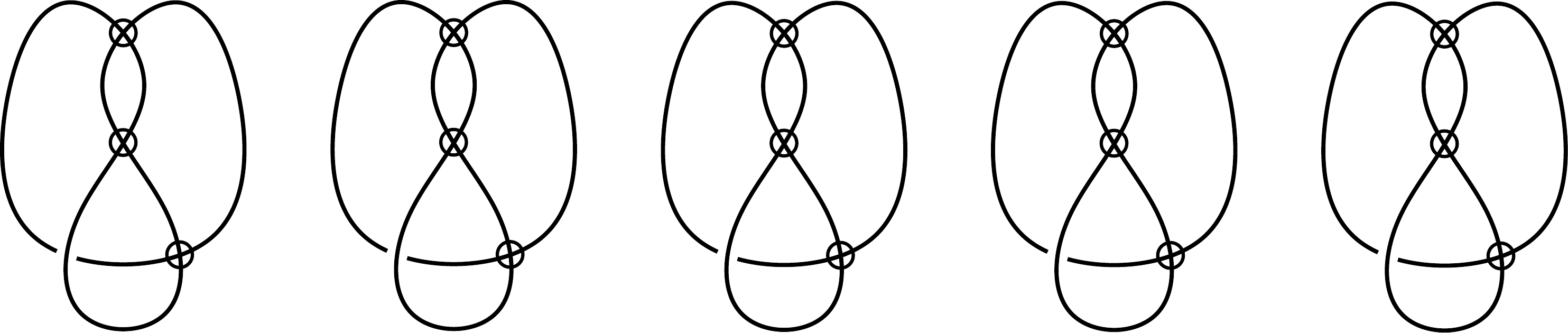}
\put(58,17){$\alpha_1$}
\put(19,55){$\alpha_1$}
\put(43,87){$\alpha_1$}
\put(157,17){$\alpha_2$}
\put(118,55){$\alpha_1$}
\put(142,87){$\alpha_1$}
\put(256,17){$\alpha_1$}
\put(217,55){$\alpha_2$}
\put(241,87){$\alpha_1$}
\put(355,17){$\alpha_2$}
\put(316,55){$\alpha_2$}
\put(340,87){$\alpha_1$}
\put(454,17){$\alpha_3$}
\put(415,55){$\alpha_2$}
\put(439,87){$\alpha_1$}
\end{overpic}
\caption{The multi-virtual family of the virtual knot diagram in Figure \ref{Fg:FigureEight} up to permutational equivalence. It is conceivable that some of the diagrams are still (permutationally) equivalent.}\label{Fg:FigureEights}
\end{figure}

The resulting five multi-virtual diagrams can be seen in Figure \ref{Fg:FigureEights}. No two of the five diagrams are obtained from each other just by retyping, and it is guaranteed that every diagram of $m(V)$ is permutationally equivalent to at least one of the diagrams in the figure. But it is conceivable that some of the diagrams in Figure \ref{Fg:FigureEights} are (permutationally) equivalent.
\end{example}

Working only up to equivalence (rather than up to permutational equivalence) is more subtle. Let $V_1$, $V_2$ be two virtual link diagrams with $n_1$ and $n_2$ virtual crossings, respectively, and let $D_1\in m(V_1)$, $D_2\in m(V_2)$. The diagrams $D_1$ and $D_2$ might have some types in common. While studying the equivalence of $D_1$ and $D_2$, we should therefore not retype the diagrams $D_1$ and $D_2$ independently. However, we can retype the union $D_1\cup D_2$ of the two diagrams. In this way we obtain $B_{n_1+n_2}$ pairs of diagrams for which the equivalence problem needs to be solved. For instance, when $n_1=n_2=2$, we obtain $B_4=15$ pairs of diagrams with types assigned according to the columns of
\begin{equation}\label{Eq:2Plus2}
    \begin{array}{ccccccccccccccc}
    1&1&1&1&1&1&1&1&1&1&1&1&1&1&1\\
    1&1&1&1&1&2&2&2&2&2&2&2&2&2&2\\
    \hline
    1&1&2&2&2&1&1&1&2&2&2&3&3&3&3\\
    1&2&1&2&3&1&2&3&1&2&3&1&2&3&4
    \end{array}\ ,
\end{equation}
where the first $n_1$ rows correspond to the virtual crossings of $V_1$ and the last $n_2$ rows correspond to the virtual crossings of $V_2$.

\begin{figure}[!ht]
\centering
\begin{overpic}[width = 0.6\textwidth]{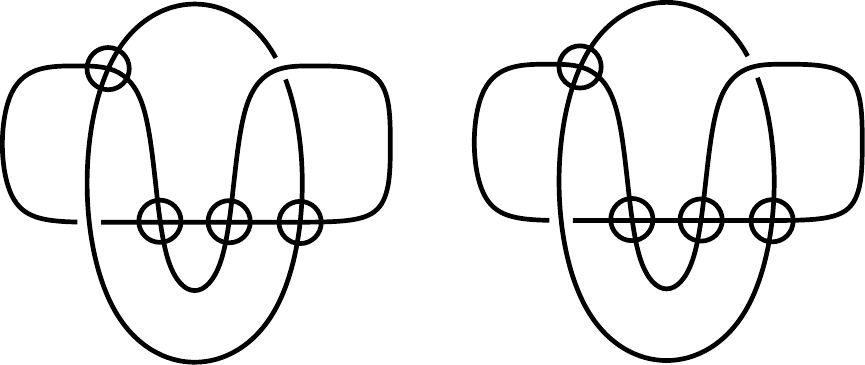}
\put(25,105){$\alpha$}
\put(40,32){$\beta$}
\put(63,34){$\alpha$}
\put(86,32){$\beta$}
\put(180,105){$\beta$}
\put(195,34){$\alpha$}
\put(216,32){$\beta$}
\put(240,34){$\alpha$}
\end{overpic}
\caption{Are these permutationally equivalent links equivalent?}\label{Fg:LinksPair}
\end{figure}

Already the case $V_1=V_2$ is interesting. To illustrate the equivalence problem in that case, Figure \ref{Fg:LinksPair} gives two diagrams with the same virtual projection $V$, each with four virtual crossings. The two diagrams are obviously permutationally equivalent, but are they equivalent? The answer turns out to be negative---see Example \ref{Ex:Special}. This is one of the $B_{4+4}=B_8=4140$ pairs of diagrams in $m(V)$ that need to be investigated for equivalence in order to fully understand the family $m(V)$ of diagrams up to equivalence. 

\section{Invariants of multi-virtual links based on quandle colorings}\label{Sc:Invariants}

Quandles are the algebras of knot theory \cite{EN,Joy1,Mat} whose axioms arise naturally from the classical Reidemeister moves. We will give a definition of quandles that has a more algebraic flavor than is usually the case in low-dimensional topology, but which is nevertheless equivalent.

A set $Q$ with a binary operation $*$ is called a \emph{magma}. Given a magma $(Q,*)$ and $x\in Q$, the \emph{right translation} $R_x:Q\to Q$ is defined by $R_x(y)=y*x$.

A magma $(Q,*)$ is a \emph{right quasigroup} if for every $x\in Q$ the right translation $R_x$ is a permutation of $Q$. We will then denote by $/$ the \emph{right division} operation on $Q$ defined by $x/y = R_x^{-1}(y)$, and we note that $(y*x)/x = y = (y/x)*x$ holds for all $x,y\in Q$.

The \emph{right multiplication group} of a right quasigroup $(Q,*)$ is the permutation group
\begin{displaymath}
    \rmlt{Q,*} = \langle R_x:x\in Q\rangle
\end{displaymath}
generated by all right translations of $(Q,*)$.

An \emph{endomorphism} of a right quasigroup $(Q,*)$ is a mapping $f:Q\to Q$ such that $f(x*y) = f(x)*f(y)$ for all $x,y\in Q$. Note that then $f(x/y) = f(x)/f(y)$ also holds, since $f(x/y)*f(y) = f((x/y)*y)=f(x)$. A bijective endomorphism is an \emph{automorphism}. The automorphisms of $(Q,*)$ form a group under composition, the \emph{automorphism group} $\aut{Q,*}$ of $(Q,*)$.

A right quasigroup $(Q,*)$ is a \emph{rack} if it satisfies \emph{right self-distributivity}
\begin{displaymath}
    (x*y)*z = (x*z)*(y*z).
\end{displaymath}
In other words, a rack is a magma in which every right translation is an automorphism. A \emph{quandle} is a rack $(Q,*)$ that satisfies the \emph{idempotent} identity
\begin{displaymath}
    x*x=x.
\end{displaymath}

\subsection{Operator quandle colorings} 

The idea of using a quandle automorphism in the coloring rule at a virtual crossing first appeared in a paper of Manturov \cite{Man} and was further developed in \cite{KM}. In \cite{Kau2}, the first author defined multi-virtual quandles and uses it to distinguish knots and purely multi-virtual diagrams. In the following subsections, we use multi-virtual quandles to construct invariants for multi-virtual links.

An \emph{operator quandle} (or a \emph{multi-virtual quandle}) is a quandle $(Q,*)$ together with a list $A$ of pairwise commuting automorphisms of $(Q,*)$. We will be assigning automorphisms from the list $A$ to virtual crossings, with all virtual crossings of the same type receiving the same automorphism. For multi-virtual link diagrams of type $T$, we will generically denote the list of automorphisms by $A_T$ and we will denote the automorphism of $(Q,*)$ assigned to crossings of type $\alpha$ also just by $\alpha$.

\begin{figure}[!ht]
\centering
\begin{overpic}[width = 0.6\textwidth]{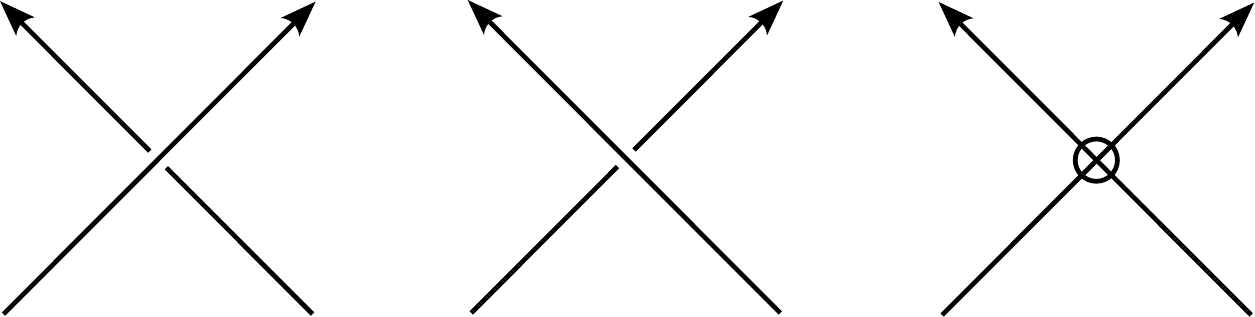}
\put(5,15){$b$}
\put(60,15){$a$}
\put(-8,48){$a*b$}
\put(60,48){$b$}
\put(110,15){$b$}
\put(165,15){$a$}
\put(110,48){$a$}
\put(165,48){$b/a$}
\put(216,15){$b$}
\put(271,15){$a$}
\put(254,33){$\alpha$}
\put(200,48){$\alpha(a)$}
\put(270,48){$\alpha^{-1}(b)$}
\end{overpic}
\caption{Operator quandle coloring rules for multi-virtual link diagrams.}\label{Fg:ColoringRules}
\end{figure}

Let $D$ be an oriented multi-virtual link diagram with virtual crossings of type $T$. Let $(Q,*,A_T)$ be an operator quandle. Then an \emph{operator quandle coloring}, or just \emph{coloring}, of $D$ is an assignment of elements of $Q$ to the arcs of $D$ compatible with the coloring rules depicted in Figure \ref{Fg:ColoringRules}. We let $\col{D,Q,*,A_T}$ denote the number of colorings of $D$ by the operator quandle $(Q,*,A_T)$.

\begin{figure}[!ht]
\bigskip
\centering

\begin{overpic}[width = \textwidth]{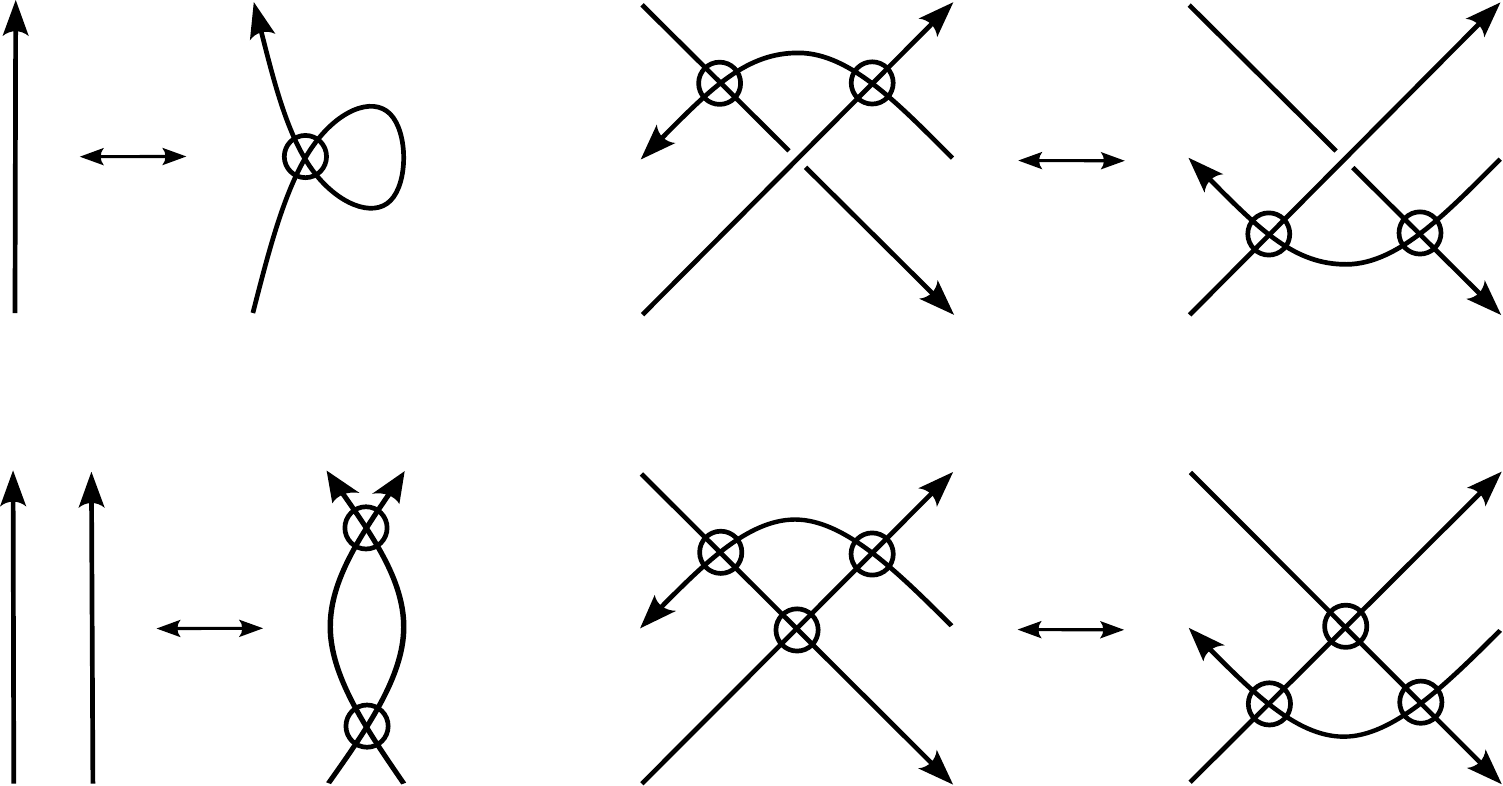}

\put(1,138){$a$}
\put(1,250){$a$}
\put(75,138){$a$}
\put(60,250){$\alpha\alpha^{-1}(a)$}
\put(110,218){$\alpha^{-1}(a)$}
\put(33,200){$v1a$}
\put(104,194){$\alpha$}

\put(1,-9){$a$}
\put(1,102){$a$}
\put(27,-9){$b$}
\put(27,102){$b$}
\put(98,-9){$a$}
\put(126,-9){$b$}
\put(84,32){$f(b)$}
\put(125,32){$\alpha^{-1}(a)$}
\put(60,102){$\alpha\alpha^{-1}(a)$}
\put(125,102){$\alpha^{-1}\alpha(b)$}
\put(111,4){$\alpha$}
\put(111,66){$\alpha$}
\put(57,53){$v2a$}

\put(194,139){$a$}
\put(296,139){$\alpha(c)/a$}
\put(296,203){$b$}
\put(160,207){$\alpha^{-1}\alpha(b)$}
\put(194,248){$c$}
\put(296,231){$\alpha^{-1}(a)$}
\put(221,205){$\alpha$}
\put(270,205){$\alpha$}

\put(365,139){$a$}
\put(403,139){$\alpha(c/\alpha^{-1}(a))$}
\put(464,198){$b$}
\put(333,179){$\alpha\alpha^{-1}(b)$}
\put(365,248){$c$}
\put(437,249){$\alpha^{-1}(a)$}
\put(393,182){$\alpha$}
\put(441,182){$\alpha$}

\put(324,200){$cv3a$}

\put(194,-9){$a$}
\put(296,-9){$\beta^{-1}\alpha(c)$}
\put(296,56){$b$}
\put(160,60){$\alpha^{-1}\alpha(b)$}
\put(194,102){$c$}
\put(296,102){$\alpha^{-1}\beta(a)$}
\put(246,32){$\beta$}
\put(221,57){$\alpha$}
\put(270,57){$\alpha$}

\put(365,-9){$a$}
\put(426,-9){$\alpha\beta^{-1}(c)$}
\put(464,51){$b$}
\put(333,32){$\alpha\alpha^{-1}(b)$}
\put(365,102){$c$}
\put(426,102){$\beta\alpha^{-1}(a)$}
\put(418,61){$\beta$}
\put(392,36){$\alpha$}
\put(441,36){$\alpha$}

\put(322,53){$mv3a$}
\end{overpic}
\bigskip
\caption{Compatibility of multi-virtual Reidemeister moves with operator quandle colorings rules.}\label{Fg:ColoringCheck}
\end{figure}

\begin{theorem}\label{Th:Coloring}
Let $D_1$ and $D_2$ be two equivalent oriented multi-virtual link diagrams with virtual crossings typed by $T$. Let $(Q,*,A_T)$ be an operator quandle. Then $\col{D_1,Q,*,A_T} = \col{D_2,Q,*,A_T}$.
\end{theorem}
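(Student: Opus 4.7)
By Theorem \ref{Th:OrientedBasis}, two oriented multi-virtual link diagrams are equivalent if and only if they are related by isotopy and a finite sequence of the nine generating moves $\Omega1a,\Omega1b,\Omega2a,\Omega3a,v1a,v1b,v2a,cv3a,mv3a$. The plan is to establish invariance of $\col{D,Q,*,A_T}$ under each generating move by producing an explicit bijection between the colorings of the local ``before'' and ``after'' tangles, keeping the colors on the boundary arcs fixed. Concretely, for each move I would fix the colors of the incoming boundary arcs arbitrarily, propagate them through the tangle using the rules in Figure \ref{Fg:ColoringRules}, and verify that the resulting colors on the outgoing boundary arcs agree on both sides. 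Summing over all boundary colorings then gives the identity of coloring counts.

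For the four classical $\Omega$-moves the verification is standard quandle theory: idempotence $x*x=x$ handles the $\Omega1$ moves; invertibility of right translations handles $\Omega2a$; and right self-distributivity $(x*y)*z=(x*z)*(y*z)$ is exactly the identity needed at $\Omega3a$. Nothing about the ambient multi-virtual structure interferes here, since these moves involve only classical crossings.

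For the five non-classical moves, the checks are carried out diagrammatically in Figure \ref{Fg:ColoringCheck}. Three ingredients drive the verification. First, each $\alpha\in A_T$ is a bijection, so $\alpha\circ\alpha^{-1}=\mathrm{id}=\alpha^{-1}\circ\alpha$; this handles the $v1a,v1b,v2a$ moves, where the outgoing boundary colors reduce immediately to the incoming ones. Second, each $\alpha\in A_T$ is an \emph{automorphism} of $(Q,*)$, so $\alpha(c/\alpha^{-1}(a))=\alpha(c)/a$ (equivalently $\alpha(c*b)=\alpha(c)*\alpha(b)$); this is precisely what is needed to match the colors on the two sides of the $cv3a$ move, where a classical crossing sits between two virtual crossings of type $\alpha$. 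Third---and this is the point where the definition of operator quandle pays off---the $mv3a$ move involves virtual crossings of two types $\alpha,\beta$, and the outgoing boundary colors on the two sides reduce to $\beta^{-1}\alpha(c)$ versus $\alpha\beta^{-1}(c)$, and to $\alpha^{-1}\beta(a)$ versus $\beta\alpha^{-1}(a)$; these agree exactly because $\alpha$ and $\beta$ commute, which is guaranteed by the definition of $(Q,*,A_T)$.

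The main obstacle is essentially bookkeeping: propagating colors correctly through each local tangle and confirming that the various composite expressions simplify identically on the two sides. The conceptual content is small---the axioms on an operator quandle (right self-distributivity, idempotence, automorphism, pairwise commuting) are engineered precisely so that the identities demanded by the generating moves hold automatically, so the proof reduces to reading off Figure \ref{Fg:ColoringCheck} move by move.
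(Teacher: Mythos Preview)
Your proposal is correct and follows essentially the same route as the paper: reduce to the generating set of Theorem~\ref{Th:OrientedBasis}, dispose of the classical $\Omega$-moves by standard quandle axioms, and verify the five virtual moves via Figure~\ref{Fg:ColoringCheck}, using bijectivity for $v1a,v1b,v2a$, the automorphism property $\alpha(c/\alpha^{-1}(a))=\alpha(c)/a$ for $cv3a$, and pairwise commutativity for $mv3a$. The paper's own proof is nearly word-for-word the same, only slightly terser on the classical moves.
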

\begin{proof}
It is well known that the two quandle coloring rules for classical crossings depicted in Figure \ref{Fg:ColoringRules} are compatible with classical Reidemeister moves, cf.~\cite{EN}. It therefore suffices to prove that all three coloring rules of Figure \ref{Fg:ColoringRules} are compatible with multi-virtual Reidemeister moves. By Theorem \ref{Th:OrientedBasis}, it suffices to check the five moves in Figure \ref{Fg:Minimalmoves} that involve virtual crossings. This is done in Figure \ref{Fg:ColoringCheck}, where we list only the moves $v1a$, $v2a$, $cv3a$ and $mv3a$ because $v1b$ behaves as $v1a$ for the purposes of operator quandle colorings. The moves $v1a$ and $v2a$ yield the trivial condition $\alpha\alpha^{-1}=1=\alpha^{-1}\alpha$. From $cv3a$ we get $\alpha^{-1}\alpha = \alpha\alpha^{-1}$ and $\alpha(c)/b = \alpha(c/\alpha^{-1}(b))$, where the latter condition holds because $\alpha$ is an automorphism of $(Q,*)$ and hence also an automorphism with respect to the right division operation. Finally, $mv3a$ again yields the trivial condition $\alpha^{-1}\alpha = \alpha\alpha^{-1}$ and also $\alpha^{-1}\beta = \beta\alpha^{-1}$ and $\beta^{-1} \alpha = \alpha \beta^{-1}$, which hold because the automorphisms from $A_T$ pairwise commute.
\end{proof}

Let $L$ be an oriented multi-virtual link and $(Q,*,A_T)$ an operator quandle. Then $\col{L,Q,*,A_T}$ is defined as $\col{D,Q,*,A_T}$, where $D$ is any oriented diagram of $L$. Theorem \ref{Th:Coloring} guarantees that $\col{L,Q,*,A_T}$ is well-defined and is therefore an invariant of $L$, called the \emph{operator quandle coloring invariant} of $L$ with respect to $(Q,*,A_T)$.

\subsection{A 2-cocycle invariant for multi-virtual links}

In this subsection, generalizing \cite{CJKLS, Kaz}, we introduce an invariant of oriented multi-virtual links based on $2$-cocycles of operator quandles.

Given a quandle $(Q,*)$ and a group $(G,\cdot)$, a mapping $\phi:Q\times Q\to G$ is a \emph{quandle $2$-cocycle} of $(Q,*)$ with values in $G$ if for all $a,b,c \in Q$
\begin{itemize}

\item[(i)] $\phi(a,a) = 0$ and

\item[(ii)] $\phi(a,b) + \phi(a*c,b*c) = \phi(a,c) + \phi(a*b,c).$

\end{itemize}

Let $(Q,*,A_T)$ be an operator quandle and $(G,\cdot)$ a group. Then a mapping $\phi:Q\times Q\to G$ is an \emph{operator quandle $2$-cocycle} of $(Q,*,A_T)$ with values in $G$ if it is a quandle $2$-cocycle of $(Q,*)$ that satisfies the condition 
\begin{equation}\label{Eq:OperatorCocycle}
    \phi(a,b) = \phi(\alpha(a),\alpha(b))
\end{equation}
for all $a,b \in Q$ and $\alpha \in A_T$.

Let $(Q,*,A_T)$ be an operator quandle, $(G,\cdot)$ a group with identity element $1$ and $\phi:Q\times Q\to G$ an operator quandle $2$-cocycle. Let $D$ be an oriented multi-virtual link diagram. Given a classical crossing $c$ of $D$ and an operator quandle coloring $C$ of $D$ by elements of $(Q,*,A_T)$, let $\phi(a,b)^\epsilon$ be the weight assigned to $c$ with respect to $C$, where $a$ and $b$ are the colors of $C$ on the incoming arcs of $c$, $\epsilon = 1$ if $c$ is a positive crossing and $\epsilon = -1$ if $c$ is a negative crossing. Let
\begin{equation}\label{Eq:CocInvariant}
    \mathrm{Coc}(D,Q,*,A_T,\phi) = \sum_C\prod_c\phi(a,b)^\epsilon,
\end{equation}
where the sum is taken over all operator quandle colorings $C$ of $D$ by elements of $(Q,*,A_T)$, and the product is taken over all classical crossings $c$ of $D$. The value $\mathrm{Coc}(D,Q,*,A_T,\phi)$ should be seen as an element of the group ring $\mathbb Z G$. If $D$ has no classical crossings,  the empty product $\prod_c \phi(a,b)^\epsilon$ is equal to $1$. If $D$ has no operator quandle colorings by $(Q,*,A_T)$, we set the value of $\mathrm{Coc}(D,Q,*,A_T,\phi)$ to be equal to $0$.

\begin{theorem}\label{Th:Coc}
Let $D_1$ and $D_2$ be two equivalent oriented multi-virtual links. Let $(Q,*,A_T)$ be an operator quandle, $(G,\cdot)$ a group and $\phi:Q\times Q\to G$ an operator quandle $2$-cocycle. Then $\mathrm{Coc}(D_1,Q,*,A_T,\phi)=\mathrm{Coc}(D_2,Q,*,A_T,\phi)$.
\end{theorem}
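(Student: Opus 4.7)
The plan is to verify invariance of $\mathrm{Coc}(D,Q,*,A_T,\phi)$ under each of the nine generating moves listed in Theorem \ref{Th:OrientedBasis}, working locally in the tangle where the move takes place. For each move I would exhibit a canonical bijection between the colorings of the diagram before and after the move, which is the identity on the arcs outside the tangle, and then check that every pair of corresponding colorings contributes the same weight to the sum in (\ref{Eq:CocInvariant}). Summing over the bijection will then yield the equality of the invariants.

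First I would dispose of the four classical moves $\Omega1a$, $\Omega1b$, $\Omega2a$, $\Omega3a$. Their tangles contain only classical crossings, so the operator data $A_T$ play no role here, and the desired invariance reduces to the classical $2$-cocycle invariant of Carter--Jelsovsky--Kamada--Langford--Saito \cite{CJKLS}. Conditions (i) and (ii) in the definition of a quandle $2$-cocycle handle $\Omega1$ and $\Omega3$, respectively, while $\Omega2a$ is automatic from the fact that the two classical crossings appearing in this move have opposite signs and identical pairs of inputs, so their weights are inverses of each other in $G$ and cancel.

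Next, for the purely virtual moves $v1a$, $v1b$, $v2a$, and $mv3a$, the tangle contains no classical crossings at all. The canonical bijection between colorings on the two sides is precisely the one that makes the virtual coloring rules compatible with these moves, and was already exhibited in Figure \ref{Fg:ColoringCheck} during the proof of Theorem \ref{Th:Coloring}; in particular it preserves the colors on every arc outside the tangle. Since every classical crossing of the diagram lies outside the tangle, the product $\prod_c\phi(a,b)^{\epsilon}$ is literally unchanged, and the contribution to the sum is identical for each pair of corresponding colorings.

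The main obstacle, and the only place where condition (\ref{Eq:OperatorCocycle}) is genuinely used, will be the mixed move $cv3a$, whose tangle contains one classical crossing together with two virtual crossings of type $\alpha$. A careful local diagram chase along the lines of Figure \ref{Fg:ColoringCheck} will show that the canonical coloring bijection changes the pair of inputs at the lone classical crossing from some pair $(a,b)$ on one side of the move to $(\alpha(a),\alpha(b))$ on the other side (or, depending on the orientation convention, to $(\alpha^{-1}(a),\alpha^{-1}(b))$, which is symmetric under $\alpha \leftrightarrow \alpha^{-1}$). The operator cocycle condition $\phi(a,b)=\phi(\alpha(a),\alpha(b))$ --- which when applied to the automorphism $\alpha^{-1}$ also yields $\phi(a,b)=\phi(\alpha^{-1}(a),\alpha^{-1}(b))$ --- therefore guarantees that the weight of this single classical crossing is the same on both sides. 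All other classical crossings lie outside the tangle and contribute identically, so the two sums over colorings agree. I expect the bookkeeping of exactly which $\alpha$-translates appear as inputs at the crossing in $cv3a$ to be the only delicate step; once it is settled, condition (\ref{Eq:OperatorCocycle}) closes the argument.
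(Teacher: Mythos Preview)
Your proposal is correct and follows essentially the same approach as the paper: cite \cite{CJKLS} for the classical moves, observe that the purely virtual moves $v1a$, $v1b$, $v2a$, $mv3a$ involve no classical crossings so the weight product is unchanged, and use condition~(\ref{Eq:OperatorCocycle}) for the single classical crossing in $cv3a$. The paper makes the ``delicate step'' you anticipate explicit by reading off from Figure~\ref{Fg:ColoringCheck} that the two contributions at that crossing are $\phi(\alpha(c),a)^{-1}$ and $\phi(c,\alpha^{-1}(a))^{-1}$, which agree by~(\ref{Eq:OperatorCocycle}) applied with $\alpha^{-1}$.
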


\begin{proof}
It is well known that the quandle $2$-cocycle invariant is compatible with the classical Reidemeister moves \cite{CJKLS}. Among the remaining moves of Figure \ref{Fg:ColoringCheck}, only the $cv3a$ move involves a classical crossing. Let us fix a classical crossing and an operator quandle coloring that labels the arcs in the $cv3a$ move as shown in Figure \ref{Fg:ColoringCheck}. The contribution to the claimed invariant is $\phi(\alpha(c),a)^{-1}$ in the left diagram of $cv3a$ and $\phi(c,\alpha^{-1}(a))^{-1}$ in the right diagram of $cv3a$. By \eqref{Eq:OperatorCocycle}, the contributions coincide.
\end{proof}

For an oriented multi-virtual link $L$, an operator quandle $(Q,*,A_T)$ and an operator quandle $2$-cocycle $\phi:Q\times Q\to G$, let $\mathrm{Coc}(L,,Q,*,A_T,\phi)=\mathrm{Coc}(D,Q,*,A_T,\phi)$, where $D$ is any diagram of $L$. Theorem \ref{Th:Coc} guarantees that $\mathrm{Coc}(L,Q,*,A_T,\phi)$ is well-defined. This is the \emph{operator quandle $2$-cocycle invariant} for the oriented multi-virtual link $L$ with respect to $(Q,*,A_T)$ and $\phi$.

\begin{rem}
Other algebraic invariants for links could be generalized to the multi-virtual setting, for example, Yang-Baxter cocycle invariants \cite{CN1}, quandle quiver invariants \cite{CN2} and invariants based on the Zh-construction \cite{CT}.
\end{rem}

\section{Quandles with many pairwise commuting right translations}\label{Sc:Quandles}

In the definition of operator quandle colorings we did not demand that the automorphisms assigned to distinct virtual crossing types must be distinct. However, should two distinct types $\alpha$, $\beta$ be assigned the same automorphism, then certainly the operator quandle coloring invariant will not be able to distinguish between the two possibly nonequivalent multi-virtual links obtained from one another by interchanging the types $\alpha$ and $\beta$.

We are therefore interested in quandles with many pairwise commuting automorphisms. Since every right translation is an automorphism in a quandle, we want to understand quandles with many commuting right translations, the topic of this section. Readers interested only in topological considerations can take note of Proposition \ref{Pr:Construction} and skip ahead to Section \ref{Sc:Distinguish}.

Throughout this section we will denote the multiplication operation in a magma by $\cdot$ or by juxtaposition, rather than by $*$. If $Q$ is a magma, $x\in Q$ and $f\in\aut{Q}$ then
\begin{displaymath}
    R_{f(x)} = fR_xf\inv.
\end{displaymath}
In particular, if $Q$ is rack and $x,y\in Q$ then 
\begin{displaymath}
    R_{xy} = R_yR_xR_y\inv.
\end{displaymath}    

For a rack $Q$, let 
\begin{displaymath}
    \rdis{Q} = \langle R_x^{-1}R_y:x,y\in Q\rangle
\end{displaymath}
be the \emph{right displacement group} of $Q$. It is well known that $\rdis{Q}\unlhd\rmlt{Q}$ and $\rmlt{Q}/\rdis{Q}$ is cyclic \cite{Joy1,Joy2,HSV}.

\subsection{Racks with an abelian right multiplication group}

An extreme case of commuting right translations occurs when the entire right multiplication group is abelian. Let us characterize such racks and quandles.

A magma is \emph{medial} if it satisfies the identity 
\begin{displaymath}
    (xu)(vy) = (xv)(uy),
\end{displaymath}
\emph{graphic} if it satisfies the identity 
\begin{displaymath}
    (xy)x = xy,
\end{displaymath}
\emph{paragraphic} if it satisfies the identity
\begin{displaymath}
    x(yx) = xy,
\end{displaymath}
and \emph{flexible} if it satisfies the identity
\begin{displaymath}
    x(yx)=(xy)x.
\end{displaymath}

Medial quandles have been studied extensively \cite{JPSZ,JPZ,Tra}. In \cite{CMP}, a semigroup is said to be graphic if it satisfies the identity $x(yx)=yx$, which is the dual (mirror image) of the graphic identity given above. See \cite{MP} for homology of graphic quandles.
The tentative name ``paragraphic'' is new here and it is supposed to mimic the terminology for medial and paramedial groupoids \cite{CJK}. The flexible law is used extensively in quasigroup theory and nonassociative algebra \cite{Sch}.

\begin{prop}\label{Pr:RackAbelian}
The following conditions are equivalent for a rack $Q$:
\begin{enumerate}
    \item[(i)] $\rmlt{Q}$ is abelian,
    \item[(ii)] $(xy)z = (xz)y$ for all $x,y,z\in Q$,
    \item[(iii)] $x(yz) = xy$ for all $x,y,z\in Q$,
    \item[(iv)] $Q$ is medial and paragraphic.
\end{enumerate}
\end{prop}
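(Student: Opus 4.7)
The plan is to run the cycle $(i)\Leftrightarrow(ii)\Leftrightarrow(iii)\Rightarrow(iv)\Rightarrow(i)$, so that each implication is handled exactly once. The first two equivalences are immediate from the definitions and right self-distributivity: $R_yR_z=R_zR_y$ evaluated at $x$ is precisely $(xz)y=(xy)z$, which is (ii) after swapping $y$ and $z$; and combining (ii) with $(xy)z=(xz)(yz)$ gives $(xz)(yz)=(xz)y$, which, letting $xz$ range over $Q$ by bijectivity of $R_z$, is (iii) (the reverse direction retraces the same steps). Equivalently, (iii) reads $R_{yz}=R_y$, and since $R_{yz}=R_zR_yR_z^{-1}$ in any rack, this is (i).

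For $(iii)\Rightarrow(iv)$, paragraphic is (iii) specialized to $z=x$, while mediality follows from $(xu)(vy)=(xu)v=(xv)u=(xv)(uy)$, where the outer equalities use (iii) and the middle one uses (ii) (which was shown equivalent to (iii) in the previous step).

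The main obstacle is $(iv)\Rightarrow(i)$, for which I would pass to the right multiplication group. Using $R_{ab}=R_bR_aR_b^{-1}$, mediality $(xu)(vy)=(xv)(uy)$ is equivalent to
\begin{displaymath}
    R_b^{-1}R_vR_y^{-1}R_b=R_y^{-1}R_v\qquad\text{for all }b,v,y\in Q,
\end{displaymath}
so the $R_b$-conjugate of $R_vR_y^{-1}$ is independent of $b$. Comparing conjugates by $R_b$ and $R_{b'}$ forces $R_bR_{b'}^{-1}$ to centralize each $R_vR_y^{-1}$; since these generate $\rdis{Q}$, the right displacement group is abelian. Paragraphic $x(yx)=xy$ rewrites as $R_yR_x^{-1}(x)=R_x^{-1}R_y(x)$, equivalently $[R_x,R_y](xy)=xy$, where the commutator $[R_x,R_y]=R_xR_yR_x^{-1}R_y^{-1}$ lies in $\rdis{Q}$ because $\rmlt{Q}/\rdis{Q}$ is abelian (indeed cyclic). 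Since $\rdis{Q}$ is abelian and normal in $\rmlt{Q}$ with cyclic quotient, conjugation by any $R_z$ induces one and the same automorphism of $\rdis{Q}$; transporting the paragraphic fixed-point identity along this uniform conjugation, together with $R_{xx}=R_x$, first upgrades $[R_x,R_y](xy)=xy$ to $[R_x,R_y](x)=x$ and then, by iterating on translates of $x$ by arbitrary $R_z$, to $[R_x,R_y]=1$ on all of $Q$, which is (i).

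The hardest part will be this last transportation step: one must combine the single fixed point supplied by paragraphic with the rigid conjugation structure forced on $\rdis{Q}$ by mediality in precisely the right way to conclude that the commutator acts trivially on all of $Q$, not merely on $xy$. The remaining pieces are standard rack-theoretic bookkeeping.
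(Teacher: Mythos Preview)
Your treatment of $(i)\Leftrightarrow(ii)\Leftrightarrow(iii)$ and $(iii)\Rightarrow(iv)$ is correct and essentially identical to the paper's.

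The gap is in $(iv)\Rightarrow(i)$. Everything you write up to and including ``$[R_x,R_y](xy)=xy$'' is fine: mediality does make $\rdis{Q}$ abelian, and since all $R_z$ have the same image in $\rmlt{Q}/\rdis{Q}$ they do induce a single automorphism $\sigma$ on $\rdis{Q}$. But the ``transportation'' you describe does not go through. Concretely, conjugating $d:=[R_x,R_y]$ by $R_z^{\pm 1}$ gives $\sigma^{\pm 1}(d)\bigl((xy)z^{\pm 1}\bigr)=(xy)z^{\pm 1}$; taking $z=y$ yields $\sigma^{-1}(d)(x)=x$, and conjugating back shows that $d$ fixes every element of the form $xz$, i.e.\ $d$ is the identity on $\mathrm{Im}(L_x)$ (and, by $d_{y,x}=d^{-1}$, also on $\mathrm{Im}(L_y)$). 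That is as far as this line of argument gets you: each application of $R_z$ introduces another power of $\sigma$, so you never upgrade the fixed set of $d$ itself beyond $\mathrm{Im}(L_x)\cup\mathrm{Im}(L_y)$. In a rack the left translations $L_x$ are generally neither injective nor surjective, so this does not force $d=1$. Your invocation of $R_{xx}=R_x$ only tells you $d_{xx,y}=d_{x,y}$, which adds nothing new, and the vague ``iterating on translates of $x$'' runs into the same $\sigma$-twist each time. You yourself flag this step as the hardest and leave it as a hope; as written it is not a proof.

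The paper sidesteps this entirely by proving $(iv)\Rightarrow(iii)$ via a short element-wise computation. Starting from paragraphic $xy=x(yx)$, write $x=(x/z)z$ and apply mediality and then right self-distributivity to obtain
\[
xy=((x/z)(zx))\,(y(zx)).
\]
A second application of paragraphic (first $z(x/z)=z((x/z)z)=zx$, then $(x/z)(zx)=(x/z)(z(x/z))=(x/z)z=x$) collapses the left factor to $x$, giving $xy=x(y(zx))$; substituting $z\mapsto z/x$ yields $xy=x(yz)$. This two-line use of paragraphic, rather than any group-theoretic fixed-point propagation, is the missing idea.
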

\begin{proof}
Condition (ii) says that $R_yR_z=R_zR_y$ for all $y,z\in Q$ and hence (i) is equivalent to (ii). Condition (iii) says that $R_{yz}=R_y$ for all $y,z\in Q$. Since $R_{yz} = R_zR_yR_z^{-1}$, (iii) is equivalent to (ii).

Suppose that $\rmlt{Q}$ is abelian. Then $(xu)(vy) = (xu)v$ by (iii) and, similarly, $(xv)(uy) = (xv)u$. Since $(xu)v=(xv)u$, $Q$ is medial. Substituting $z=x$ into (iii) shows that $Q$ is paragraphic.

Finally, suppose that $Q$ is medial and paragraphic, and let us establish (iii). We have $xy=x(yx)$ since $Q$ is paragraphic, $x(yx)=((x/z)z)(yx) = ((x/z)y)(zx)$ by mediality, and $((x/z)y)(zx) = ((x/z)(zx))(y(zx))$ by right self-distributivity. So far we showed
\begin{equation}\label{Eq:Aux1}
    xy = ((x/z)(zx))(y(zx)).
\end{equation}
Now, $z(x/z) = z((x/z)z) = zx$ by the paragraphic law, and hence $(x/z)(zx)=(x/z)(z(x/z)) = (x/z)z = x$ again by the paragraphic law. Substituting this into \eqref{Eq:Aux1} yields $xy = x(y(zx))$ and replacing $z$ with $z/x$ then gives $xy = x(yz)$.
\end{proof}

\begin{example}
The finite model builder \texttt{mace4} \cite{McC} easily finds a paragraphic quandle of order $8$ that is not medial. Conversely, the $3$-element quandle $(\mathbb Z_3,\cdot)$ given by $x\cdot y = -x-y$ is medial but not paragraphic. Both properties are therefore required in condition (iv) of Proposition \ref{Pr:RackAbelian}.
\end{example}

\begin{lemma}\label{Lm:GraphicParagraphic}
Graphic racks, graphic quandles and paragraphic quandles are the same objects.
\end{lemma}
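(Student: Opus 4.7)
The plan is to establish two equalities of classes: graphic racks coincide with graphic quandles, and graphic quandles coincide with paragraphic quandles.

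For the first equality I would show that every graphic rack is automatically idempotent. Given $x \in Q$, since $R_x$ is a bijection of $Q$ there exists $y \in Q$ with $yx = x$; applying the graphic identity $(yx)x = yx$ then collapses immediately to $xx = x$. Hence every graphic rack is a quandle, and the reverse inclusion is trivial.

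For the second equality I would argue that in any quandle the graphic identity and the paragraphic identity are equivalent. The key observation is that $R_x$ is an automorphism (because $Q$ is a rack) which fixes $x$ (because $Q$ is idempotent), so that
\[
(xy)x \;=\; R_x(xy) \;=\; R_x(x)\cdot R_x(y) \;=\; (xx)(yx) \;=\; x(yx).
\]
Thus the equality $(xy)x = x(yx)$ holds in every quandle, and the two one-sided identities $(xy)x = xy$ and $x(yx) = xy$ are clearly equivalent in the presence of this common reformulation. This yields the coincidence of graphic quandles with paragraphic quandles.

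I do not anticipate any substantial obstacle: the argument is a short bookkeeping exercise combining idempotency with the fact that right translations in a rack are automorphisms. The whole equivalence of graphic and paragraphic in the quandle setting is powered by the single identity $(xy)x = x(yx)$ derived above, and the only point that one must be slightly careful about is in the rack-to-quandle step, where one uses surjectivity of $R_x$ (not idempotency, which is precisely what is to be proved) to produce the element $y$ with $yx = x$.
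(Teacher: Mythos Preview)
Your argument is correct and follows essentially the same route as the paper: both proofs derive flexibility $(xy)x = x(yx)$ in any quandle from right self-distributivity plus idempotence, and both reduce the rack case to showing idempotence. The only cosmetic difference is that the paper substitutes $y=x$ into the graphic law to get $(xx)x = xx$ and cancels on the right (using injectivity of $R_x$), whereas you produce $y$ with $yx=x$ via surjectivity of $R_x$; either half of bijectivity does the job.
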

\begin{proof}
Note that every quandle is flexible: $(xy)x = (xx)(yx) = x(yx)$. Also note that the graphic and paragraphic axioms coincide in the presence of flexibility. It therefore suffices to prove that every graphic rack is idempotent. Substituting $x=y$ into the graphic axiom yields $(xx)x=xx$, and we obtain $xx=x$ upon canceling $x$ on the right.
\end{proof}

\begin{example}
The $2$-element rack $(\mathbb Z_2,\cdot)$ given by $x\cdot y = x+1$ is paragraphic but not graphic.
\end{example}

Combining Proposition \ref{Pr:RackAbelian} with Lemma \ref{Lm:GraphicParagraphic} yields:

\begin{corollary}
The following conditions are equivalent for a quandle $Q$:
\begin{enumerate}
    \item[(i)] $\rmlt{Q}$ is abelian,
    \item[(ii)] $Q$ is medial and paragraphic,
    \item[(iii)] $Q$ is medial and graphic.
\end{enumerate}
\end{corollary}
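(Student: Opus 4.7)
The plan is to observe that the corollary is essentially a direct combination of the two preceding results, applied in the special case of quandles. First, I would note that every quandle is a rack, so Proposition \ref{Pr:RackAbelian} applies directly: conditions (i) and (ii) of the corollary are the respective instances of conditions (i) and (iv) of that proposition (with ``quandle'' in place of ``rack''), and hence (i) $\iff$ (ii) is immediate. No additional work is required here because the idempotent identity plays no role in the four-way equivalence of Proposition \ref{Pr:RackAbelian}.

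For the equivalence (ii) $\iff$ (iii), the key input is Lemma \ref{Lm:GraphicParagraphic}, which asserts that for quandles the graphic and paragraphic axioms define the same class of algebras. Thus ``medial and paragraphic'' and ``medial and graphic'' describe exactly the same quandles, and substituting one for the other in condition (ii) yields condition (iii). There is no genuine obstacle in either direction; the main content of the corollary already lives in Proposition \ref{Pr:RackAbelian} and Lemma \ref{Lm:GraphicParagraphic}, and the corollary merely repackages them in a form that highlights the two equivalent axiomatizations of quandles with abelian right multiplication group.
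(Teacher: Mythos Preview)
Your proposal is correct and matches the paper's approach exactly: the paper introduces the corollary with the sentence ``Combining Proposition~\ref{Pr:RackAbelian} with Lemma~\ref{Lm:GraphicParagraphic} yields:'' and gives no further argument. Your elaboration simply spells out what that combination amounts to.
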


When $\rmlt{Q}$ is abelian then certainly the subgroup $\rdis{Q}$ is abelian. We recall:

\begin{prop}[{\cite[Prop. 2.4]{HSV}}]
Let $Q$ be a rack. Then $\rdis{Q}$ is abelian if and only if $Q$ is medial.
\end{prop}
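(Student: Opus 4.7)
The plan is to translate the medial law $(xu)(vy) = (xv)(uy)$ into an identity among right translations using the rack conjugation formula $R_{xy} = R_y R_x R_y^{-1}$, and then to check that this translated identity is essentially the commutativity of generators of $\rdis{Q}$. Reading off the effect on $x$, mediality is equivalent to $R_{vy} R_u = R_{uy} R_v$ for all $u,v,y \in Q$. Substituting the conjugation formula and canceling $R_y$ on the left, this becomes the single operator identity $R_v R_y^{-1} R_u = R_u R_y^{-1} R_v$ for all $u,v,y \in Q$, which I will call $(\star)$.

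The forward direction is then immediate. If $\rdis{Q}$ is abelian then for any $y,u,v \in Q$ the two elements $R_y^{-1} R_v$ and $R_y^{-1} R_u$ lie in $\rdis{Q}$ and commute, and expanding this commutativity and canceling a common $R_y^{-1}$ on the left yields $(\star)$. Hence $Q$ is medial.

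For the converse, assume $(\star)$ and note that rearranging $(\star)$ by multiplying by suitable inverses on both sides gives the equivalent form $R_a^{-1} R_d R_c^{-1} = R_c^{-1} R_d R_a^{-1}$ for all $a,c,d \in Q$. For arbitrary generators $R_a^{-1} R_b$ and $R_c^{-1} R_d$ of $\rdis{Q}$, a short chain of rewrites then converts $(R_a^{-1} R_b)(R_c^{-1} R_d)$ into $(R_c^{-1} R_d)(R_a^{-1} R_b)$: first apply $(\star)$ to swap $R_b$ and $R_d$ across $R_c^{-1}$ in the middle, then apply the rearranged form to move $R_a^{-1}$ past $R_d R_c^{-1}$. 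Hence all pairs of generators commute and $\rdis{Q}$ is abelian. The main subtlety is that mediality, read directly, only delivers the ``diagonal'' commutators $[R_y^{-1} R_u, R_y^{-1} R_v] = 1$ among the generators of $\rdis Q$; the second rearrangement of $(\star)$ is precisely what is needed to propagate commutativity to arbitrary pairs of generators.
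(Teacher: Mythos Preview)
Your argument is correct. The translation of mediality into the operator identity $(\star)$ via $R_{xy}=R_yR_xR_y^{-1}$ is clean, and your two applications of $(\star)$ (once in its original form, once in the inverted form $R_a^{-1}R_dR_c^{-1}=R_c^{-1}R_dR_a^{-1}$) do establish that any two generators $R_a^{-1}R_b$ and $R_c^{-1}R_d$ of $\rdis{Q}$ commute, which suffices.

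Note, however, that the paper does not supply its own proof of this proposition: it is simply quoted from \cite[Prop.~2.4]{HSV}. So there is no in-paper argument to compare your approach against. Your proof stands on its own as a valid direct argument.
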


\subsection{Connected racks with an abelian right multiplication group}

Recall that a rack $Q$ is \emph{connected} if the permutation group $\rmlt{Q}$ acts transitively on $Q$. A catalog of small connected quandles can be found in the \texttt{GAP} packages \texttt{rig} \cite{VG} and \texttt{RightQuasigroups} \cite{NV}. We will denote the $m$th connected quandle of order $n$ by \texttt{ConnectedQuandle(n,m)}.
 
It is customary to color knot diagrams by connected quandles rather than just by quandles. Let us have a look at connected quandles with many commuting right translations. We will first show that, as is well known, the extreme case of $\rmlt{Q}$ abelian cannot occur in nontrivial connected quandles.

For a group $G$, let us denote by $G'$ the derived subgroup of $G$.

\begin{prop}[{\cite[Prop. 2.3]{HSV}}]\label{Pr:ConnRack} Let $Q$ be a connected rack. Then $\rmlt{Q}'=\rdis{Q}$.
\end{prop}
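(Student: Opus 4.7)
The plan is to split the proof into the two inclusions $\rmlt{Q}' \subseteq \rdis{Q}$ and $\rdis{Q} \subseteq \rmlt{Q}'$, and I expect only the second to require the connectedness hypothesis.

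For the first inclusion, I would simply invoke the fact recalled immediately before the proposition, namely that $\rdis{Q}$ is normal in $\rmlt{Q}$ with cyclic quotient $\rmlt{Q}/\rdis{Q}$. Since cyclic groups are abelian, the commutator subgroup $\rmlt{Q}'$ lies in the kernel of the quotient map $\rmlt{Q} \to \rmlt{Q}/\rdis{Q}$, which yields $\rmlt{Q}' \subseteq \rdis{Q}$. Connectedness plays no role here.

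For the reverse inclusion $\rdis{Q} \subseteq \rmlt{Q}'$, it suffices to show that every generator $R_x\inv R_y$ of $\rdis{Q}$ is a commutator in $\rmlt{Q}$. By connectedness of $Q$, pick $g \in \rmlt{Q}$ with $g(x) = y$. Because $Q$ is a rack, every right translation is an automorphism of $Q$, so every element of $\rmlt{Q}$ is an automorphism; hence the identity $R_{f(x)} = fR_xf\inv$ recalled at the start of this section gives $R_y = R_{g(x)} = gR_xg\inv$. Therefore $R_x\inv R_y = R_x\inv g R_x g\inv = [R_x\inv, g] \in \rmlt{Q}'$, as required.

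The argument is short and there is no serious obstacle. The only conceptual point worth highlighting is that connectedness of $Q$ is precisely the tool that converts the abstract generator $R_x\inv R_y$ into a concrete commutator in $\rmlt{Q}$, by supplying an element of $\rmlt{Q}$ that conjugates $R_x$ to $R_y$.
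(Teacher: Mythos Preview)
The paper does not give its own proof of this proposition; it is quoted from \cite[Prop.~2.3]{HSV} and used as a black box. Your argument is the standard one and is correct. One small slip: with the paper's convention $[x,y]=x^{-1}y^{-1}xy$, the element $R_x\inv g R_x g\inv$ is $[R_x,g\inv]$, not $[R_x\inv,g]$; either way it lies in $\rmlt{Q}'$, so the conclusion is unaffected.
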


A magma $Q$ is called \emph{permutational} if there is a permutation $f$ of $Q$ such that $xy=f(x)$ for all $x,y\in Q$. We denote this permutational magma by $Q(f)$. Note that every permutational magma is a rack since $(xz)(yz) = f(xz) = f^2(x) = f(xy) = (xy)z$ in $Q(f)$.

\begin{lemma}\label{Lm:Permutational}
The following conditions are equivalent for a magma $Q$:
\begin{enumerate}
\item[(i)] $Q$ is a connected rack and $\rmlt{Q}$ is abelian,
\item[(ii)] $Q=Q(f)$ is a permutational magma and $\langle f\rangle$ acts transitively on $Q$.
\end{enumerate}
\end{lemma}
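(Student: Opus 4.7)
The plan is to prove the two implications separately, with the reverse direction being almost immediate from unpacking definitions and the forward direction resting essentially on Proposition \ref{Pr:ConnRack}.

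For the direction $(ii)\Rightarrow(i)$, I would start by confirming (as already observed in the paragraph preceding the lemma) that every permutational magma $Q(f)$ is a rack. The key point is that in $Q(f)$ we have $R_y(x) = xy = f(x)$ for every $y\in Q$, so \emph{every} right translation equals the single permutation $f$. Hence $\rmlt{Q} = \langle f\rangle$, which is cyclic and therefore abelian. Since $\rmlt{Q}$ acts on $Q$ exactly as $\langle f\rangle$ does, the hypothesis that $\langle f\rangle$ acts transitively on $Q$ translates directly into connectedness of the rack $Q$.

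For the more substantial direction $(i)\Rightarrow(ii)$, I would exploit Proposition \ref{Pr:ConnRack}, which states that for a connected rack $\rmlt{Q}' = \rdis{Q}$. If $\rmlt{Q}$ is abelian then $\rmlt{Q}' = 1$, and hence $\rdis{Q} = 1$. Since $\rdis{Q} = \langle R_x\inv R_y : x,y\in Q\rangle$ is trivial, we get $R_x = R_y$ for all $x,y\in Q$. Calling this common permutation $f$, we obtain $xy = R_y(x) = f(x)$ for all $x,y$, so $Q = Q(f)$ is permutational. Connectedness of $Q$ then says precisely that $\rmlt{Q} = \langle f\rangle$ acts transitively on $Q$, which is the second half of (ii).

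The only nontrivial input is Proposition \ref{Pr:ConnRack}; without it one would have to do some work to show directly that the derived subgroup of $\rmlt{Q}$ contains all $R_x\inv R_y$, so that is the step I expect to be the main obstacle had it not already been recorded in the cited reference. Everything else is a short calculation, so the overall proof should fit comfortably in a few lines.
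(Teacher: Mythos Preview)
Your proposal is correct and follows essentially the same argument as the paper: both directions are handled exactly as you describe, with the forward implication resting on Proposition~\ref{Pr:ConnRack} to force $\rdis{Q}=1$ and hence all right translations equal, and the reverse implication being the immediate observation that $\rmlt{Q(f)}=\langle f\rangle$.
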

\begin{proof}
Suppose that $Q$ is a connected rack and $\rmlt{Q}$ is abelian. Then Proposition \ref{Pr:ConnRack} yields $\rdis{Q}=\rmlt{Q}'=1$. Hence $R_x=R_y$ for all $x,y\in Q$ and $Q$ is permutational, say $Q=Q(f)$. The group $\langle f\rangle = \rmlt{Q}$ then acts transitively on $Q$.

Conversely, let $Q=Q(f)$ be a permutational magma (hence a rack) such that $\langle f\rangle$ acts transitively on $Q$. Then $\rmlt{Q}=\langle f\rangle$ is abelian and $Q$ is connected.
\end{proof}

\begin{prop}\label{Pr:ConnAndAb}
A magma $Q$ is a connected quandle with $\rmlt{Q}$ abelian if and only if $|Q|=1$.
\end{prop}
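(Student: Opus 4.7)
The plan is to derive this as an immediate corollary of Lemma \ref{Lm:Permutational}. The one direction is trivial: a one-element magma is automatically a connected quandle, and its right multiplication group is the trivial (hence abelian) group.

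For the nontrivial direction, suppose $Q$ is a connected quandle with $\rmlt{Q}$ abelian. Since a quandle is in particular a rack, Lemma \ref{Lm:Permutational} applies and gives us $Q = Q(f)$ for some permutation $f$ of $Q$, where $xy = f(x)$, and moreover $\langle f\rangle$ acts transitively on $Q$. Now I would invoke the idempotent law: for every $x \in Q$, $x = xx = f(x)$, so $f = \mathrm{id}_Q$. But then $\langle f\rangle = 1$, and transitivity of the trivial group on $Q$ forces $|Q| = 1$.

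There is essentially no obstacle here; the work has all been done in Lemma \ref{Lm:Permutational}, and the quandle idempotency is exactly the extra ingredient that collapses the permutational structure to the identity. The proposition thus amounts to observing that the classes ``connected quandle'' and ``connected rack with abelian right multiplication group'' only intersect in the trivial case.
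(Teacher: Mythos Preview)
Your proof is correct and essentially identical to the paper's own argument: invoke Lemma~\ref{Lm:Permutational} to write $Q=Q(f)$ with $\langle f\rangle$ transitive, then use idempotence $f(x)=xx=x$ to force $f=\mathrm{id}$ and hence $|Q|=1$.
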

\begin{proof}
Let $Q$ be a connected quandle with $\rmlt{Q}$ abelian. By Lemma \ref{Lm:Permutational}, $Q=Q(f)$ is permutational and $\langle f\rangle$ acts transitively on $Q$. Since $f(x)=xx=x$ for all $x\in Q$, we have $\langle f\rangle = 1$ and $|Q|=1$. Conversely, if $|Q|=1$, $Q$ is clearly a connected quandle with $\rmlt{Q}$ abelian.
\end{proof}

\subsection{R-cliques}
Let us write $[x,y] = x^{-1}y^{-1}xy$ for the commutator of two group elements $x$ and $y$. Let $\sim$ be the binary relation on a right quasigroup $Q$ defined by
\begin{displaymath}
    a\sim b\text{ if and only if }[R_a,R_b]=1.
\end{displaymath}
If $a\sim b$, we say that $a$ and $b$ \emph{$R$-commute}. A subset $C$ of a right quasigroup $Q$ is an \emph{$R$-clique} if $a\sim b$ for every $a,b\in C$.

Note that every singleton subset of a right quasigroup is an $R$-clique, and every subset of a projection quandle (that is, a quandle with multiplication $xy=x$) is an $R$-clique. 

\begin{lemma}
Let $Q$ be a rack. Then $\sim$ is reflexive, symmetric and invariant under $\aut{Q}$.    
\end{lemma}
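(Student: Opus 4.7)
The plan is to verify the three properties directly from the definition $a\sim b\iff [R_a,R_b]=1$, using only elementary facts about commutators in a group together with the identity $R_{f(x)}=fR_xf^{-1}$ stated at the beginning of the section.

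For reflexivity, note that any element of a group commutes with itself, so $[R_a,R_a]=1$ for every $a\in Q$. Symmetry is equally immediate since $[x,y]=1$ if and only if $[y,x]=1$ in any group, so $[R_a,R_b]=1$ if and only if $[R_b,R_a]=1$.

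The only step requiring a short argument is the invariance under $\aut{Q}$. Suppose $f\in\aut{Q}$ and $a\sim b$. Using the conjugation formula $R_{f(a)}=fR_af^{-1}$ and $R_{f(b)}=fR_bf^{-1}$, I would compute
\begin{displaymath}
R_{f(a)}R_{f(b)} = (fR_af^{-1})(fR_bf^{-1}) = fR_aR_bf^{-1} = fR_bR_af^{-1} = R_{f(b)}R_{f(a)},
\end{displaymath}
where the middle equality uses $[R_a,R_b]=1$. Hence $[R_{f(a)},R_{f(b)}]=1$, i.e., $f(a)\sim f(b)$.

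There is no serious obstacle here; the conjugation formula $R_{f(x)}=fR_xf^{-1}$ (valid for any endomorphism of a magma, applied here with $f$ an automorphism) does all the work, reducing invariance under automorphisms to invariance of the commutator relation under simultaneous conjugation.
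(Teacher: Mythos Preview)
Your proof is correct and follows essentially the same approach as the paper: reflexivity and symmetry are immediate, and invariance under $\aut{Q}$ is deduced from the conjugation formula $R_{f(x)}=fR_xf^{-1}$, which reduces the question to the invariance of commutators under simultaneous conjugation. The only cosmetic difference is that the paper writes the computation as $[R_{f(a)},R_{f(b)}]=f[R_a,R_b]f^{-1}=1$, whereas you verify $R_{f(a)}R_{f(b)}=R_{f(b)}R_{f(a)}$ directly; these are of course equivalent.
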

\begin{proof}
Reflexivity and symmetry of $\sim$ are clear. Let $f\in\aut{Q}$ and $a\sim b$ so that $[R_a,R_b]=1$. Then $[R_{f(a)},R_{f(b)}] = [fR_af^{-1},fR_bf^{-1}] = f[R_a,R_b]f^{-1} = ff^{-1}=1$ shows that $f(a)\sim f(b)$.
\end{proof}

\begin{example}
The relation $\sim$ is not necessarily transitive, even when $Q$ is a connected quandle. Let $Q$ be \emph{\texttt{ConnectedQuandle(10,1)}}, that is, the connected quandle with multiplication table
\begin{displaymath}
    \begin{array}{r|rrrrrrrrrr}
        &0&1&2&3&4&5&6&7&8&9\\
        \hline
        0&0&0&5&9&6&2&4&0&0&3\\
        1&1&1&6&1&5&4&2&8&7&1\\
        2&5&6&2&2&2&0&1&9&2&7\\
        3&9&3&3&3&8&7&3&5&4&0\\
        4&6&5&4&8&4&1&0&4&3&4\\
        5&2&4&0&7&1&5&5&3&5&5\\
        6&4&2&1&6&0&6&6&6&9&8\\
        7&7&8&9&5&7&3&7&7&1&2\\
        8&8&7&8&4&3&8&9&1&8&6\\
        9&3&9&7&0&9&9&8&2&6&9
    \end{array}
\end{displaymath}
Then $R_0=(2,5)(3,9)(4,6)$, $R_1=(2,6)(4,5)(7,8)$, $R_3=(0,9)(4,8)(5,7)$, $[R_0,R_1]=[R_1,R_3]=1$ but $[R_0,R_3]\ne 1$.
\end{example}

\begin{example}
This example shows that not every $R$-clique of a quandle is necessarily a subquandle. Consider the quandle with multiplication table
\begin{displaymath}
    \begin{array}{c|ccc}
        &0&1&2\\
        \hline
        0&0&2&0\\
        1&1&1&1\\
        2&2&0&2
    \end{array}\ .
\end{displaymath}
We have $R_0=1$, so $C=\{0,1\}$ is an $R$-clique. But $0\cdot 1=2\not\in C$. 
\end{example}

\begin{lemma}\label{Lm:CloseMlt}
Let $A$ be a subset of a rack $Q$. Then $\langle R_a:a\in A\rangle = \langle R_x:x\in\langle A\rangle\rangle$.
\end{lemma}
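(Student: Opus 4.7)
The plan is to prove the two inclusions separately, with all the work lying in the nontrivial direction $\langle R_x:x\in\langle A\rangle\rangle\subseteq\langle R_a:a\in A\rangle$. The opposite inclusion is immediate from $A\subseteq\langle A\rangle$.

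For the nontrivial direction, I would set $H=\langle R_a:a\in A\rangle$ and introduce the set
\[
S=\{x\in Q:R_x\in H\}.
\]
The strategy is to verify that $S$ is a subrack of $Q$ containing $A$; once this is done, $\langle A\rangle\subseteq S$ is automatic, which says exactly that $R_x\in H$ for every $x\in\langle A\rangle$, yielding the reverse inclusion. The containment $A\subseteq S$ is clear from the definitions.

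To check that $S$ is closed under both the rack product and right division, I would apply the conjugation formula $R_{f(x)}=fR_xf^{-1}$ (recorded at the start of the section) with $f=R_y$ and with $f=R_y^{-1}$, both of which are automorphisms of $Q$ since $Q$ is a rack. This yields
\[
R_{xy}=R_yR_xR_y^{-1}\qquad\text{and}\qquad R_{x/y}=R_y^{-1}R_xR_y,
\]
so if $R_x,R_y\in H$ then $R_{xy},R_{x/y}\in H$, whence $xy,x/y\in S$. I do not expect a genuine obstacle; the one conceptual point worth flagging is that the rack hypothesis is precisely what makes every $R_y$ an automorphism, so that the conjugation formula can be invoked both for $\cdot$ and for $/$ and gives closure of $S$ as a full subrack rather than merely as a subset closed under the product.
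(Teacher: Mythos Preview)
Your argument is correct and uses exactly the same ingredients as the paper: the conjugation identities $R_{xy}=R_yR_xR_y^{-1}$ and $R_{x/y}=R_y^{-1}R_xR_y$. The only cosmetic difference is packaging: the paper argues by induction on the length of a word $x\in\langle A\rangle$ in the generators $A$, while you phrase the same induction as ``$S=\{x:R_x\in H\}$ is a subrack containing $A$''; these are interchangeable formulations of the same proof.
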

\begin{proof}
Obviously, $G=\langle R_a:a\in A\rangle \le \langle R_x:x\in\langle A\rangle\rangle$. For the other inclusion, consider $x\in\langle A\rangle$. Then $x$ is a word in the generators $A$. We will prove by induction on the length of $x$ that $R_x\in G$. If $x=a\in A$, we are done. Suppose that $x=uv$ or $x=u\rdiv v$ and assume for the induction step that $R_u,R_v\in G$. If $x=uv$ then $R_x = R_{uv} = R_vR_uR_v^{-1}\in G$ and if $x=u\rdiv v$ then $R_x = R_{u\rdiv v} = R_v^{-1}R_uR_v\in G$.
\end{proof}

\begin{lemma}\label{Lm:CloseR}
Let $Q$ be a rack and $C$ an $R$-clique of $Q$. Then $\langle C\rangle$ is an $R$-clique of $Q$.
\end{lemma}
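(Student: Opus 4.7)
The plan is to exploit Lemma \ref{Lm:CloseMlt} to lift commutativity from the generators $C$ to the subrack $\langle C\rangle$. The argument takes essentially three steps, and there is no real obstacle: the whole point of Lemma \ref{Lm:CloseMlt} is to make this kind of bootstrap automatic.

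First, I would unpack the hypothesis. Saying that $C$ is an $R$-clique means exactly that $[R_a,R_b]=1$ for all $a,b\in C$. Equivalently, the set $\{R_a:a\in C\}$ of right translations consists of pairwise commuting permutations of $Q$. Since any group generated by a set of pairwise commuting elements is abelian, the group
\[
    G=\langle R_a:a\in C\rangle
\]
is abelian.

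Second, I would apply Lemma \ref{Lm:CloseMlt} directly to $A=C$ to obtain
\[
    G=\langle R_a:a\in C\rangle=\langle R_x:x\in\langle C\rangle\rangle.
\]
This is the key move: the right-hand side is generated by all right translations $R_x$ coming from the subrack $\langle C\rangle$, not merely from the generating set $C$.

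Finally, since $G$ is abelian, any two of its elements commute, so in particular $[R_x,R_y]=1$ for every $x,y\in\langle C\rangle$. By definition, this says $x\sim y$ for all $x,y\in\langle C\rangle$, i.e., $\langle C\rangle$ is an $R$-clique, as required. No case analysis, no delicate induction beyond what is already absorbed into Lemma \ref{Lm:CloseMlt}, and no use of the quandle idempotent law (the statement and proof are valid for all racks).
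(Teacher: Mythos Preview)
Your proof is correct and follows exactly the same approach as the paper: observe that $\langle R_a:a\in C\rangle$ is abelian, invoke Lemma~\ref{Lm:CloseMlt} to identify this group with $\langle R_x:x\in\langle C\rangle\rangle$, and conclude that $\langle C\rangle$ is an $R$-clique. The paper's version is simply terser.
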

\begin{proof}
Since $C$ is an $R$-clique, the group $\langle R_a:a\in C\rangle$ is abelian. By Lemma \ref{Lm:CloseMlt}, $\langle R_a:a\in C\rangle = \langle R_x:x\in\langle C\rangle \rangle$, which shows that $\langle C\rangle$ is an $R$-clique.
\end{proof}

\subsection{Maximal R-cliques}

Let us have a look at maximal $R$-cliques, particularly in connected quandles.

The \emph{Cayley kernel relation} $\equiv$ on a rack $Q$ is defined by
\begin{displaymath}
    a\equiv b\text{ if and only if }R_a=R_b.
\end{displaymath}
It is easy to see that $\equiv$ is an equivalence relation. In fact, it is a congruence on $Q$. Indeed, if $x\equiv y$ and $u\equiv v$ then $R_{xu} = R_uR_xR_u\inv = R_vR_yR_v\inv = R_{yv}$ and hence $xu\equiv yv$. Similarly, $(x/u)\equiv (y/v)$.

\begin{prop}\label{Pr:Subquandle}
Let $Q$ be a rack and let $C$ be a maximal $R$-clique of $Q$. Then $C$ is a subrack of $Q$. Moreover, $C$ is a union of some congruence classes of the Cayley kernel relation.
\end{prop}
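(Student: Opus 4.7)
The plan is to handle the two assertions separately, and I expect both to reduce to very short maximality arguments once the right ambient $R$-clique is identified.

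For the subrack claim, my approach is to invoke Lemma \ref{Lm:CloseR} directly. Since $C$ is itself an $R$-clique, that lemma gives that $\langle C\rangle$ is an $R$-clique of $Q$, and obviously $C\subseteq\langle C\rangle$. The maximality of $C$ then forces $\langle C\rangle = C$. Because the subrack generated by a set is by definition closed under both $\cdot$ and the right division $/$, this equality is precisely the statement that $C$ is a subrack.

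For the second claim, I would fix $a\in C$ and any $b\in Q$ with $a\equiv b$ (that is, $R_a=R_b$), and show $b\in C$ using maximality. For every $c\in C$ we have $[R_c,R_a]=1$ since $C$ is an $R$-clique, and substituting $R_b$ for $R_a$ (which is legitimate because they are literally the same permutation) gives $[R_c,R_b]=1$. Hence $C\cup\{b\}$ is an $R$-clique containing $C$, and the maximality of $C$ forces $b\in C$. Thus the Cayley kernel class of $a$ lies in $C$, and $C$ is a union of such classes.

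Neither step looks like a real obstacle; the only small subtlety is to remember that an $R$-clique is defined purely in terms of commutation of the permutations $R_x$, so elements that give literally the same right translation automatically $R$-commute with everything in sight. The rack axioms enter only implicitly, through Lemma \ref{Lm:CloseR}, which is where the subrack structure is bootstrapped from the commutation hypothesis.
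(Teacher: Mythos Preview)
Your argument is correct and matches the paper's proof essentially line for line: both parts use Lemma \ref{Lm:CloseR} plus maximality for the subrack claim, and the observation that $R_a=R_b$ forces $b$ to $R$-commute with everything in $C$, followed by maximality, for the Cayley kernel claim.
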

\begin{proof}
By Lemma \ref{Lm:CloseR}, $\langle C\rangle$ is also an $R$-clique. Since $C\le\langle C\rangle$ and $C$ is a maximal $R$-clique, we must have $C=\langle C\rangle$. If $a\in C$ and $a\equiv b$ then $R_a=R_b$ and hence $R_b$ commutes with the same right translations as $R_a$. Since $C$ is maximal, $b\in C$. It follows that $C$ is a union of some equivalence classes of $\equiv$.
\end{proof}

A maximal $R$-clique of a connected quandle might be a union of several equivalence classes of $\equiv$, as can be seen in Example \ref{Ex:6}.

\begin{example}
Maximal $R$-cliques of a quandle need not all have the same size, nor does the size of a maximal $R$-clique have to divide the size of the quandle. In the quandle with multiplication table
\begin{displaymath}
    \begin{array}{c|ccccc}
       &0&1&2&3&4\\
       \hline
       0&0&0&1&1&1\\
       1&1&1&0&0&0\\
       2&3&4&2&4&3\\
       3&4&2&4&3&2\\
       4&2&3&3&2&4
    \end{array}
\end{displaymath}
the subsets $\{0,1\}$ and $\{2\}$ are both maximal $R$-cliques. 
\end{example}

\begin{example}
Maximal $R$-cliques of a given quandle can intersect nontrivially. In the quandle 
\begin{displaymath}
    \begin{array}{c|cccc}
        &0&1&2&3\\
        \hline
        0&0&0&0&0\\
        1&1&1&3&2\\
        2&2&3&2&1\\
        3&3&2&1&3  
    \end{array}
\end{displaymath}
the maximal $R$-cliques are $\{0,1\}$, $\{0,2\}$ and $\{0,3\}$.
\end{example}

\begin{corollary}
Let $Q>1$ be a connected quandle and let $C$ be an $R$-clique of $Q$. Then $C<Q$.
\end{corollary}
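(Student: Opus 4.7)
The plan is to argue by contradiction and reduce directly to Proposition \ref{Pr:ConnAndAb}. Suppose $C = Q$. Since $C$ is an $R$-clique, every pair of elements of $Q$ $R$-commutes, so $[R_x,R_y] = 1$ for all $x,y \in Q$. The generating set $\{R_x : x \in Q\}$ of $\rmlt{Q}$ thus consists of pairwise commuting elements, so $\rmlt{Q}$ is abelian.

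Now apply Proposition \ref{Pr:ConnAndAb}: a connected quandle with abelian right multiplication group has cardinality $1$. This contradicts the hypothesis $|Q| > 1$, and hence $C \ne Q$, i.e. $C < Q$.

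There is essentially no obstacle here; the work has already been done in Lemma \ref{Lm:Permutational} and Proposition \ref{Pr:ConnAndAb}, which say that a connected rack with abelian $\rmlt{Q}$ is permutational via some $f$ acting transitively by $\langle f \rangle$, and idempotency then forces $f = \mathrm{id}$ and $|Q|=1$. The only thing to verify in the corollary itself is the translation from "$C$ is an $R$-clique equal to $Q$" into "$\rmlt{Q}$ is abelian", which is immediate since a group generated by pairwise commuting generators is abelian.
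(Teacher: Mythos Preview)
Your proof is correct and matches the paper's own argument almost verbatim: assume $C=Q$, conclude $\rmlt{Q}$ is abelian, and invoke Proposition \ref{Pr:ConnAndAb} to force $|Q|=1$, a contradiction. The only difference is that you spell out why pairwise commuting generators yield an abelian group, which the paper leaves implicit.
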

\begin{proof}
If $C=Q$ then $\rmlt{Q}$ is abelian and $|Q|=1$ by Proposition \ref{Pr:ConnAndAb}.
\end{proof}

\begin{corollary}
Let $Q$ be a quandle and let $C$ be a maximal $R$-clique of $Q$. If $C>1$ then the subquandle $C$ is not connected.
\end{corollary}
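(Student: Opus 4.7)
The plan is to combine the two main structural results already established: Proposition \ref{Pr:Subquandle}, which tells us $C$ is a subquandle (in particular closed under the operation), and Proposition \ref{Pr:ConnAndAb}, which rules out connected quandles of size greater than one whose right multiplication group is abelian. The strategy is to argue that the subquandle $C$ has an abelian right multiplication group, so that applying Proposition \ref{Pr:ConnAndAb} to $C$ itself forces $|C|=1$ whenever $C$ is connected, contradicting the hypothesis $|C|>1$.

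Concretely, I would first invoke Proposition \ref{Pr:Subquandle} to know that $C$ is a subquandle of $Q$, so that the right translations of $C$ (as a quandle in its own right) are exactly the restrictions $R_a\!\restriction_C$ for $a\in C$, which are well-defined permutations of $C$ because $C$ is closed. Next I would observe that, by the definition of an $R$-clique, the full right translations $R_a$ and $R_b$ of $Q$ commute in $\mathrm{Sym}(Q)$ for all $a,b\in C$; restriction to an invariant set is a group homomorphism, so the restrictions commute in $\mathrm{Sym}(C)$, giving that $\rmlt{C}$ is abelian.

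At this point the result follows instantly: if $C$ were connected, Proposition \ref{Pr:ConnAndAb} applied to the quandle $C$ would force $|C|=1$, contradicting $|C|>1$. Hence $C$ cannot be connected. The argument is short and there is no real obstacle; the only point worth double-checking is the compatibility of restriction with the group operation (so that commuting permutations of $Q$ restrict to commuting permutations of $C$), which is immediate once we know $C$ is invariant under each $R_a$ with $a\in C$, and this invariance is precisely the content of $C$ being a subquandle.
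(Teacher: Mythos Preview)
Your proposal is correct and follows essentially the same route as the paper: invoke Proposition \ref{Pr:Subquandle} to get that $C$ is a subquandle, observe that the commuting translations $R_a$ ($a\in C$) restrict to commuting translations of $C$ so that $\rmlt{C}$ is abelian, and then apply Proposition \ref{Pr:ConnAndAb}. The paper's proof is more terse (it simply says ``since $\langle R_a:a\in C\rangle$ is abelian, so is $\rmlt{C}$''), but your extra remark about restriction to an invariant set being a group homomorphism is exactly the justification behind that step.
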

\begin{proof}
By Proposition \ref{Pr:Subquandle}, $C$ is a subquandle of $Q$. Suppose that $C$ is connected. Since $\langle R_a:a\in C\rangle\le\rmlt{Q}$ is abelian, so is $\rmlt{C}$. By Proposition \ref{Pr:ConnAndAb}, $C=1$.
\end{proof}

\subsection{Large R-cliques in connected quandles}

In this subsection we will construct an infinite family of finite connected quandles $Q$ that contain an $R$-clique $C$ such that $|C|=|Q|/3$ and, moreover, the right translations $R_c$ of $Q$ with $c\in C$ are pairwise distinct. This will show that for any multi-virtual link diagram there is a connected quandle such that all virtual crossing types in the diagram can be assigned right translations that pairwise commute and are pairwise distinct.

\def\Z{\mathbb Z}

For $m\ge 1$ and $e\in\Z_2^m$ define $Q_m(e) = (\Z_3\times\Z_2^m,*)$ by
\begin{displaymath}
    (i,a)*(j,b) = \left\{\begin{array}{ll}
        (-i-j,a),&\text{if $i-j\equiv 0\pmod 3$},\\
        (-i-j,a+b),&\text{if $i-j\equiv 1\pmod 3$},\\
        (-i-j,a+b+e),&\text{if $i-j\equiv 2\pmod 3$}.
    \end{array}\right.
\end{displaymath}

\begin{prop}\label{Pr:Construction}
Let $m\ge 1$ and $e\in\Z_2^m$. Then $Q_m(e)$ is a connected quandle of size $3\cdot 2^m$ in which the $2^m$ right translations $R_{(0,a)}$ with $a\in\Z_2^m$ are pairwise distinct and pairwise commute.
\end{prop}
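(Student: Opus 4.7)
The plan is to rewrite the defining formula in a compact form and then verify each assertion in turn. Introducing affine maps $\psi_0, \psi_1, \psi_2 : \mathbb{Z}_2^m \to \mathbb{Z}_2^m$ by $\psi_0(x) = 0$, $\psi_1(x) = x$, $\psi_2(x) = x + e$, the operation takes the uniform form
\begin{displaymath}
    (i,a) * (j,b) = \bigl(-i-j,\ a + \psi_{i-j}(b)\bigr).
\end{displaymath}
The size claim is immediate. Idempotence follows because $-2i \equiv i \pmod 3$ and $\psi_0 \equiv 0$. The right-quasigroup property is also easy: for fixed $(j,b)$, the first coordinate map $i \mapsto -i-j$ is a bijection of $\mathbb{Z}_3$, and once $i$ is recovered the second coordinate differs from $a$ by a constant, so $a$ is recovered as well.

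The main obstacle is right self-distributivity. I would expand both sides of $((i,a)*(j,b))*(k,c) = ((i,a)*(k,c))*((j,b)*(k,c))$; the first coordinates both reduce to $i+j-k \pmod 3$. For the second coordinate, setting $u = i - j$ and $v = j - k$ so that $i - k = u + v$ and $-i-j-k \equiv 2u + v \pmod 3$, the required identity is
\begin{displaymath}
    \psi_u(b) + \psi_{2u+v}(c) = \psi_{u+v}(c) + \psi_{-u}\bigl(b + \psi_v(c)\bigr).
\end{displaymath}
Writing $\psi_w(x) = \alpha_w x + \beta_w$ with $\alpha_w \in \mathbb{Z}_2$ and $\beta_w \in \mathbb{Z}_2^m$, this splits into three scalar identities: one for the coefficient of $b$ (which reduces to $\alpha_u = \alpha_{-u}$ and holds because $\alpha_w$ depends only on whether $w=0$), one for the coefficient of $c$, and one for the constant term. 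The latter two are then verified by enumerating the nine pairs $(u,v) \in \mathbb{Z}_3 \times \mathbb{Z}_3$; the constant identity uses $2e = 0$ in $\mathbb{Z}_2^m$ in a few cases.

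The claims about the translations $R_{(0,a)}$ reduce to direct computation. Explicitly,
\begin{displaymath}
    R_{(0,a)}(0,b) = (0,b), \quad R_{(0,a)}(1,b) = (2,\ a+b), \quad R_{(0,a)}(2,b) = (1,\ a+b+e),
\end{displaymath}
so distinctness is immediate from evaluating at $(1,0)$, which yields $(2,a)$. Pairwise commutativity of $R_{(0,a)}$ and $R_{(0,a')}$ follows by comparing the two compositions on each of the three fibers $\{i\} \times \mathbb{Z}_2^m$; in each case the result is identical because $\mathbb{Z}_2^m$ is abelian. Finally, for connectedness, the orbit of $(0,0)$ under $\mathrm{Mlt}_r(Q_m(e))$ exhausts $Q_m(e)$, as witnessed by the reaching translations $R_{(2,x)}(0,0) = (1,x)$ for the fiber over $1$, $R_{(1,x+e)}(0,0) = (2,x)$ for the fiber over $2$, and $R_{(1,x)}(2,0) = (0,x)$ for the fiber over $0$ (using that $(2,0)$ is already in the orbit).
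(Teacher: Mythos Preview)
Your proof is correct and follows essentially the same approach as the paper's: both verify idempotence and the right-quasigroup property directly, both reduce right self-distributivity to a nine-case check over $\mathbb{Z}_3\times\mathbb{Z}_3$, both establish distinctness of the $R_{(0,a)}$ by evaluating at $(1,0)$, both check commutativity fiberwise, and both obtain connectedness by exhibiting the orbit of $(0,0)$ explicitly. Your introduction of the affine maps $\psi_w(x)=\alpha_w x+\beta_w$ is a neat bookkeeping device that lets you split the nine-case check into three simpler scalar identities (the paper instead writes $*_r$ for the case $i-j\equiv r$ and does the nine cases in one pass), but this is a notational refinement rather than a different argument.
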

\begin{proof}
Idempotence: We have $(i,a)*(i,a) = (-2i,a) = (i,a)$ since $3i=0$.

Right quasigroup: We will show that $R_{(i,a)}$ is injective. Suppose that $(j,b)*(i,a)=(k,c)*(i,a)$. Comparing the first coordinates, we get $j=k$ and hence $(j,b)*(i,a)=(j,c)*(i,a)$, forcing $b=c$.

Right distributivity: We need to verify the identity
\begin{equation}\label{Eq:RDx}
    ((i,a)*(k,c))*((j,b)*(k,c)) = ((i,a)*(j,b))*(k,c).
\end{equation}
Let us write $(i,a)*_{i-j}(j,b)$ instead of $(i,a)*(j,b)$ to better indicate which of the three cases of the definition of $*$ applies. Suppose that $i-k\equiv r$ and $j-k\equiv s$. Then $i-j \equiv (i-k)-(j-k) \equiv r-s$, $(-i-j)-k\equiv -(i-k)-(j-k)\equiv -r-s$ and $(-i-k)-(-j-k)\equiv -i+j \equiv s-r$. We can therefore rewrite \eqref{Eq:RDx} in more detail as
\begin{equation}\label{Eq:RDy}
    ((i,a)*_r(k,c))*_{s-r}((j,b)*_s(k,c)) = ((i,a)*_{r-s}(j,b))*_{-r-s}(k,c).
\end{equation}
In the first coordinate, we obtain $-(-i-k)-(-j-k)=i+j+2k$ on the left hand side and $-(-i-j)-k = i+j-k$ on the right hand size. The results coincide thanks to $3k\equiv 0$. Since the first coordinate is now taken care of, let us focus on the second coordinate and let us rewrite \eqref{Eq:RDy} with a slight abuse of notation as
\begin{equation}\label{Eq:RDz}
    (a*_rc)*_{s-r}(b*_s c) = (a*_{r-s} b)*_{-r-s}*c.
\end{equation}
We will verify \eqref{Eq:RDz} for all nine choices of $(r,s)$. We obtain the following left hand and right hand sides of \eqref{Eq:RDz}:
\begin{displaymath}
    \begin{array}{ll}
        r=0,\,s=0:\quad a*_0b = a,                        &a*_0c = a,\\
        r=0,\,s=1:\quad a*_1(b+c) = a+b+c,                &(a+b+e)*_2 c = a+b+c,\\
        r=0,\,s=2:\quad a*_2(b+c+e) = a+b+c,              &(a+b)*_1c = a+b+c,\\
        r=1,\,s=0:\quad (a+c)*_2 b = a+b+c+e,             &(a+b)*_2c = a+b+c+e,\\
        r=1,\,s=1:\quad (a+c)*_0(b+c) = a+c,              &a*_1c = a+c,\\
        r=1,\,s=2:\quad (a+c)*_1(b+c+e) = a+b+e,          &(a+b+e)*_0c = a+b+e,\\
        r=2,\,s=0:\quad (a+c+e)*_1b = a+b+c+e,            &(a+b+e)*_1c = a+b+c+e,\\
        r=2,\,s=1:\quad (a+c+e)*_2(b+c) = a+b,            &(a+b)*_0c = a+b,\\
        r=2,\,s=2:\quad (a+c+e)*_0(b+c+e) = a+c+e,        &a*_2c = a+c+e. 
    \end{array}
\end{displaymath}
Hence $Q_m(e)$ is a quandle.

Connectedness: It suffices to show that the orbit of $(0,0)$ under the action of $\rmlt{Q_m(e)}$ is all of $Q_m(e)$. Let $a\in\Z_2^m$ be given. Then $(0,0)*(2,a)=(1,a)$, $((0,0)*(2,a))*(2,e) = (1,a)*(2,e) = (0,a)$ and $(0,0)*(1,a+e) = (2,a)$.

It remains to show that the $2^m$ right translations $R_{(0,a)}$ with $a\in\Z_2^m$ are pairwise distinct and pairwise commuting. If $a\ne b$ then $R_{(0,a)}(1,0) = (1,0)*(0,a) = (2,a)\ne (2,b) = R_{(0,b)}(1,0)$ and so $R_{(0,a)}\ne R_{(0,b)}$. We have
\begin{displaymath}
    R_{(0,a)}R_{(0,b)}(k,c) = \left\{\begin{array}{ll}
        (0,c)*(0,a)=(0,c),&\text{if $k=0$},\\
        (-1,c+b)*(0,a) = (1,a+b+c+e),&\text{if $k=1$},\\
        (-2,c+b+e)*(0,a) = (2,a+b+c+e),&\text{if $k=2$}.
    \end{array}\right.    
\end{displaymath}
Since in each case the answer is invariant under the interchange of $a$ and $b$, we conclude that $R_{(0,a)}R_{(0,b)} = R_{(0,b)}R_{(0,a)}$.
\end{proof}

\begin{example}\label{Ex:6}
For $m=1$ and $e=0$ the construction of Proposition \ref{Pr:Construction} yields the connected quandle
\begin{displaymath}
    \begin{array}{c|cccccc}
         Q_1(0) &(0,0)&(0,1)&(1,0)&(1,1)&(2,0)&(2,1)\\
                \hline
         (0,0)  &(0,0)&(0,0)&(2,0)&(2,1)&(1,0)&(1,1)\\
         (0,1)  &(0,1)&(0,1)&(2,1)&(2,0)&(1,1)&(1,0)\\
         (1,0)  &(2,0)&(2,1)&(1,0)&(1,0)&(0,0)&(0,1)\\
         (1,1)  &(2,1)&(2,0)&(1,1)&(1,1)&(0,1)&(0,0)\\
         (2,0)  &(1,0)&(1,1)&(0,0)&(0,1)&(2,0)&(2,0)\\
         (2,1)  &(1,1)&(1,0)&(0,1)&(0,0)&(2,1)&(2,1)
         
    \end{array}\ .
\end{displaymath}
It turns out that $Q_1(0)$ is isomorphic to \texttt{ConnectedQuandle(6,1)}. One can also check that $Q_1(1)$ is isomorphic to \texttt{ConnectedQuandle(6,2)} and $Q_2(0,0)$ is isomorphic to \texttt{ConnectedQuandle(12,8)}, for instance.
\end{example}

We conclude our algebraic exploration of operator quandles with several open problems:

\begin{problem}
Do all maximal $R$-cliques in a finite connected rack have the same size?
\end{problem}

\begin{problem}
Do maximal $R$-cliques in a finite connected rack $Q$ partition $Q$?
\end{problem}

\begin{problem}
Does there exist a finite connected rack $Q$ and a maximal $R$-clique $C$ of $Q$ such that $|C|$ does not divide $|Q|$?
\end{problem}

\begin{problem}
Does there exist a finite connected rack $Q>1$ and a maximal $R$-clique $C$ of $Q$ such that $|C|>|Q|/3$ (or even $|C|>|Q|/2$)? 
\end{problem}

\section{Distinguishing small oriented multi-virtual knots}\label{Sc:Distinguish}

Let $\textbf V=\{2_1,3_1,3_2,\dots,3_7\}$ be the set of diagrams of all virtual knots with at most three classical crossings as presented in the catalog \cite{Gre}.

Let $\textbf M$ be the set of all oriented multi-virtual knot diagrams with a virtual projection in $\textbf V$. Diagrams of the knots in $\textbf M$ can be found in the Appendix. For each such diagram we picked one of the two possible orientations arbitrarily and we labeled virtual crossings by generic types. Note that each diagram in the Appendix represents many multi-virtual knot diagrams, depending on the actual values of the types assigned to virtual crossings. 

\begin{rem}
We do not claim that $\textbf M$ contains all multi-virtual knots with at most $3$ classical crossings. For instance, the multi-virtual trefoil knot from Figure \ref{Fg:Trefoils} (which we suspect is not an unknot but we do not know it for sure), is not present in $\textbf M$ since its virtual projection is an unknot. Furthermore, below we will construct an infinite family of pairwise nonequivalent multi-virtual knots, each with a single classical crossing, and these knots are not found in $\textbf M$.
\end{rem}

We proceed to sort the diagrams of $\textbf M$ up to equivalence using the two invariants developed in Section \ref{Sc:Invariants}. Since equivalent multi-virtual knot diagrams have equivalent virtual projections, it suffices to take one diagram $V\in\textbf V$ at a time and consider the set $m(V)\subseteq\textbf M$.

The knot $3_6$ is the classical trefoil knot and there is nothing to do.

The knot $2_1$ contains precisely one virtual crossing, and our task is therefore to decide if the two versions of $2_1$ with virtual crossings of different types are equivalent. Similarly for the knot $3_2$.

The remaining knots $3_1$, $3_3$, $3_4$, $3_5$ and $3_7$ of $\textbf{V}$ contain precisely two virtual crossings. As explained in Subsection \ref{Ss:SameVirtual}, for each of these knots we must in principle solve the $B_4=15$ equivalence problems obtained by assigning types to virtual crossings as in \eqref{Eq:2Plus2}. We will be a bit more efficient.

\subsection{Operator quandle colorings of small multi-virtual knots}

Let $D$ be one of the diagrams of $\textbf M$ and let $(Q,*,A_T)$ be an operator quandle with right division operation $/$. In order to count all operator quandle colorings of $D$ by $(Q,*,A_T)$, we label one or two arcs of the diagram by $a$ and $b$, and then chase the coloring rule along the diagram until all arcs of $D$ are labeled. Whenever we return to an arc labeled already, we record the closure condition that must be satisfied at that point so that the labeling gives rise to an operator quandle coloring. We indicate the location of the closure check points by the symbols $\color{red}\bullet$ or $\color{red}\circ$ in the diagram.

For instance, looking at the diagram for $2_1$ in the Appendix, we see that the only condition that arises for the knot $2_1$ is $(a/\alpha(a))/\alpha^{-1}(a) = a$, which can be rewritten as 
\begin{equation}\label{Eq:21Condition}
    \alpha(a/\alpha(a)) = \alpha(a)*a    
\end{equation}
upon multiplying on the right by $\alpha^{-1}(a)$ and applying $\alpha$ to both sides. Hence $\mathrm{Col}(2_1,Q,*,(\alpha))$ is the cardinality of the set $\{a\in Q:\alpha(\alpha(a)/a) = \alpha(a)*a\}$.

We have written a \texttt{GAP} code that counts the number of operator quandle colorings once the operator quandle and the closure conditions analogous to \eqref{Eq:21Condition} are given. The code can be downloaded from the homepage of the third author. In what follows we will present a few calculations by hand. All other results were obtained by computer.

We will get a lot of mileage from the following quandle of size $4$. Let $R=\Z_2[t^{\pm 1}]/(t^2+t+1)$. Since $t^2+t+1\equiv 0$, we have $0=t^{-1}0 \equiv t^{-1}(t^2+t+1) \equiv t+1+t^{-1}$ and hence $t^{-1}\equiv 1+t$. By identifying the cosets of $R$ with their representatives, we can therefore list the four elements of $R$ as $0,1,t,t^{-1}$. The addition and multiplication in $R$ is then given by
\begin{displaymath}
    \begin{array}{c|cccc}
        (R,+)&0&1&t&t^{-1}\\
        \hline
        0&0&1&t&t^{-1}\\
        1&1&0&t^{-1}&t\\
        t&t&t^{-1}&0&1\\
        t^{-1}&t^{-1}&t&1&0
    \end{array}
    \qquad\qquad
    \begin{array}{c|cccc}
        (R,\cdot)&0&1&t&t^{-1}\\
        \hline
        0&0&0&0&0\\
        1&0&1&t&t^{-1}\\
        t&0&t&t^{-1}&1\\
        t^{-1}&0&t^{-1}&1&t\\
    \end{array}
\end{displaymath}

Let $Q_4 = (R,*)$ be the Alexander quandle on $R$ given, as usual, by $a*b = ta + (1-t)b$. The multiplication $*$ and the right division $/$ in $Q_4$ are then given by
\begin{displaymath}
    \begin{array}{c|cccc}
        (Q_4,*)&0&1&t&t^{-1}\\
        \hline
        0&0&t^{-1}&1&t\\
        1&t&1&t^{-1}&0\\
        t&t^{-1}&0&t&1\\
        t^{-1}&1&t&0&t^{-1}.
    \end{array}
    \qquad\qquad
    \begin{array}{c|cccc}
        (Q_4,/)&0&1&t&t^{-1}\\
        \hline
        0&0&t&t^{-1}&1\\
        1&t^{-1}&1&0&t\\
        t&1&t^{-1}&t&0\\
        t^{-1}&t&0&1&t^{-1}.
    \end{array}
\end{displaymath}

The automorphism group $Q_4$ is isomorphic to the alternating group $A_4$. It can be checked that the $3$-cycle
\begin{equation} 
    \theta = (1,t,t^{-1})
\end{equation}
is an automorphism of $Q_4$.

\begin{example}
We will show that if the type of the unique virtual crossing of $2_1$ is changed, the two resulting multi-virtual knots are not equivalent. It suffices to show that $\mathrm{Col}(2_1,Q_4,*,(1))\ne\mathrm{Col}(2_1,Q_4,*,(\theta^{-1}))$, for instance. With $\alpha=1$, the compatibility condition \eqref{Eq:21Condition} becomes $a/a = a*a$, which holds for every $a\in Q$ by idempotence, and hence $\mathrm{Col}(2_1,Q_4,*,(1))=4$. With $\alpha=\theta^{-1} = (1,t^{-1},t)$, the two sides of the condition \eqref{Eq:21Condition} evaluate as follows:
\begin{displaymath}
    \begin{array}{lll}
        a=0:        &\alpha(0/\alpha(0)) = \alpha(0/0)=\alpha(0)=0,                         &\alpha(0)*0=0*0=0,\\
        a=1:        &\alpha(1/\alpha(1)) = \alpha(1/t^{-1})=\alpha(t)=1,                    &\alpha(1)*1 = t^{-1}*1=t,\\
        a=t:        &\alpha(t/\alpha(t)) = \alpha(t/1) = \alpha(t^{-1})=t,                  &\alpha(t)*t = 1*t = t^{-1},\\
        a=t^{-1}:   &\alpha(t^{-1}/\alpha(t^{-1})) = \alpha(t^{-1}/t) = \alpha(1) = t^{-1}, &\alpha(t^{-1})*t^{-1} = t*t^{-1}=1.   
    \end{array}
\end{displaymath}
Hence $\mathrm{Col}(2_1,Q_4,*,(\theta^{-1}))=1$.
\end{example}

Similarly, $\mathrm{Col}(3_2,Q_3,*,(1))=4\ne 1=\mathrm{Col}(3_2,Q_4,*,(\theta^{-1}))$ shows that changing the type of the unique virtual crossing of $3_2$ yields a nonequivalent multi-virtual knot.

For diagrams $D$ with two virtual crossings, we proceed systematically as follows. According to \eqref{Eq:2Plus2}, we need to consider 15 situations and up to four types of virtual crossings at a time. Note that every short column (the two top rows or the two bottom rows) of \eqref{Eq:2Plus2} is recorded in
\begin{displaymath}
    U=\{(1,1),(1,2),(2,1),(2,2),(2,3),(1,3),(3,1),(3,2),(3,3),(3,4)\}.
\end{displaymath}
Given a quandle $(Q,*)$, let $A$ be the set of all quadruples of distinct, pairwise commuting automorphisms of $\mathrm{Aut}(Q)$. For each $u=(u_1,u_2)\in U$ we consider all quadruples $(\alpha_1,\alpha_2,\alpha_3,\alpha_4)\in A$, assign the automorphisms $(\alpha_{u_1},\alpha_{u_2})$ to the two types in $D$, and calculate the resulting operator quandle invariant. For each $u\in U$ we therefore obtain a vector $q(u)$ of invariants of length $|A|$. If for some $u,v\in U$ we have $q(u)\ne q(v)$, the two corresponding multi-virtual diagrams are not equivalent.

Using this systematic strategy, computer calculations with the quandle $Q_4$ show that all $|U|=10$ multi-virtual versions of each of the knots $3_1$, $3_3$ and $3_4$ are pairwise nonequivalent. However, for each of the knots $3_5$ and $3_7$ the computation is not able to distinguish between the two multi-virtual versions with the two types interchanged. For instance, the knot corresponding to $(1,2)$ gives the same invariant as the knot corresponding to $(2,1)$. We settle these two cases with a different quandle:

\begin{example}\label{Ex:35Calc}
Let $Q$ be \texttt{ConnectedQuandle(5,2)}, that is, the quandle with multiplication and right division given by
\begin{displaymath}
    \begin{array}{c|ccccc}
        *&1&2&3&4&5\\
        \hline
        1&1&5&4&3&2\\
        2&3&2&1&5&4\\
        3&5&4&3&2&1\\
        4&2&1&5&4&3\\
        5&4&3&2&1&5
    \end{array}
    \qquad
    \begin{array}{c|ccccc}
        /&1&2&3&4&5\\
        \hline
        1&1&4&2&5&3\\
        2&4&2&5&3&1\\
        3&2&5&3&1&4\\
        4&5&3&1&4&2\\
        5&3&1&4&2&5
    \end{array}
\end{displaymath}
The automorphism group of $Q$, calculated with the package \texttt{RightQuasigroups}, contains the automorphism $\omega=(2,4,5,3)$. For $3_5$ we need to verify the condition $((a/\alpha\beta(a))/\alpha\beta^{-1}(a))/\alpha^{-1}\beta^{-1}(a)=a$. Multiplying on the right by $\alpha^{-1}\beta^{-1}(a)$ and then substituting $\beta\alpha(a)$ for $a$ yields the condition $(\alpha\beta(a)/(\alpha\beta)^2(a))/\alpha^2(a) = \beta\alpha(a)*a$, which we can also write as
\begin{equation}\label{Eq:35Condition1}
    \beta\alpha(a)*a = \alpha(\beta( a/\beta\alpha(a))/\alpha(a)).
\end{equation}
With $(\alpha,\beta)=(1,\omega)$, this reduces to
\begin{equation}\label{Eq:35Condition2}
    \omega(a)*a = \omega(a/\omega(a))/a.
\end{equation}
Comparing the two sides of \eqref{Eq:35Condition2} yields
\begin{displaymath}
    \begin{array}{lll}
        a=1: &\omega(1)*1 = 1*1 = 1,    &\omega(1/\omega(1))/1 = \omega(1/1)/1 = \omega(1)/1 = 1/1 = 1,\\
        a=2: &\omega(2)*2 = 4*2 = 1,    &\omega(2/\omega(2))/2 = \omega(2/4)/2 = \omega(3)/2 = 2/2 = 2,\\
        a=3: &\omega(3)*3 = 2*3 = 1,    &\omega(3/\omega(3))/3 = \omega(3/2)/3 = \omega(5)/3 = 3/3 = 3,\\
        a=4: &\omega(4)*4 = 5*4 = 1,    &\omega(4/\omega(4))/4 = \omega(4/5)/4 = \omega(2)/4 = 4/4 = 4,\\
        a=5: &\omega(5)*5 = 3*5 = 1,    &\omega(5/\omega(5))/5 = \omega(5/3)/5 = \omega(4)/5 = 5/5 = 5.\\
    \end{array}
\end{displaymath}
Hence $\mathrm{Col}(3_5,Q,*,(1,\omega)) = 1$. With $(\alpha,\beta)=(\omega,1)$, the condition \eqref{Eq:35Condition1} reduces to
\begin{equation}\label{Eq:35Condition3}
    \omega(a)*a = \omega((a/\omega(a))/\omega(a)).
\end{equation}
The left hand side of \eqref{Eq:35Condition3} is the same as the left hand side of \eqref{Eq:35Condition2}. The right hand side of \eqref{Eq:35Condition3} yields
\begin{displaymath}
    \begin{array}{ll}
        a=1: &\omega((1/\omega(1))/\omega(1)) = \omega((1/1)/1) = \omega(1/1) = \omega(1) = 1,\\
        a=2: &\omega((2/\omega(2))/\omega(2)) = \omega((2/4)/4) = \omega(3/4) = \omega(1) = 1,\\
        a=3: &\omega((3/\omega(3))/\omega(3)) = \omega((3/2)/2) = \omega(5/2) = \omega(1) = 1,\\
        a=4: &\omega((4/\omega(4))/\omega(4)) = \omega((4/5)/5) = \omega(2/5) = \omega(1) = 1,\\
        a=5: &\omega((5/\omega(5))/\omega(5)) = \omega((5/3)/3) = \omega(4/3) = \omega(1) = 1.    
    \end{array}
\end{displaymath}
Hence $\mathrm{Col}(3_5,Q,*,(\omega,1)) = 5$. Interchanging the two distinct types of virtual crossings in $3_5$ therefore yields nonequivalent multi-virtual knots.
\end{example}

Similarly, one can check that with the same choice of a quandle and automorphisms as in Example \ref{Ex:35Calc}, the two versions of $3_7$ in question yield nonequivalent multi-virtual knots.

\medskip

We have now proved that with the choice of orientation as in the Appendix, all versions of ``small'' multi-virtual knots are pairwise nonequivalent.

\medskip

Let $K$ and $K'$ be the two permutationally equivalent diagrams in Figure \ref{Fg:LinksPair}. It appears that the multi-virtual arrow polynomials for $K$ and $K'$ are distinct (see \cite{Kau2} for the definition of the multi-virtual arrow polynomial), however, we do not yet have a proof. Instead, we will show below that $K$ and $K'$ are distinguished by the operator quandle $2$-cocycle invariant.

\begin{example}\label{Ex:Special}
Let $(Q,*)$ be the Alexander quandle over $R=\Z[t^{\pm 1}]/(t^4+t^3+t^2+t+1)$. It is a quandle of size $16$ with an automorphism group of size $240$ which contains the commuting automorphisms
\begin{displaymath}
    \sigma = ( t^3, t^2+t^3, t+t^3, 1+t+t^2+t^3, 1+t+t^2, 1+t+t^3, 1, t+t^2+t^3, 1+t^3, t,  1+t, 1+t^2+t^3, t^2, t+t^2, 1+t^2 )
\end{displaymath}
and
\begin{displaymath}
    \tau = ( t^3, t, 1+t+t^2+t^3, t^2, 1 )(t^2+t^3, 1+t, 1+t+t^2, t+t^2, t+t^2+t^3 )( t+t^3, 1+t^2+t^3, 1+t+t^3, 1+t^2, 1+t^3 ).
\end{displaymath}    
Let $D$ be the left diagram in Figure \ref{Fg:LinksPair}. We claim that $\mathrm{Col}(D,Q,*,(\sigma,\tau))\ne\mathrm{Col}(D,Q,*,(\tau,\sigma))$.

Figure \ref{Fg:Special} shows that once the two arcs of $D$ labeled by $a,b\in Q$ are given, the labeling of the remaining arcs is forced. The labeling gives rise to an operator quandle coloring if and only if the conditions
\begin{displaymath}
    \alpha\beta(a)/\alpha\beta^{-2}(b)=a\quad\text{and}\quad\alpha^{-1}\beta^{-1}(b)*\alpha(a)=b
\end{displaymath}
hold. These two conditions can be routinely checked by a computer, giving $\mathrm{Col}(D,Q,*,(\sigma,\tau))=16$ and $\mathrm{Col}(D,Q,*,(\tau,\sigma))=1$. Therefore, if $\alpha\ne\beta$, the two multi-virtual links in Figure \ref{Fg:LinksPair} are not equivalent.
\end{example}

\begin{figure}[!ht]
\centering
\begin{overpic}[width = 0.5\textwidth]{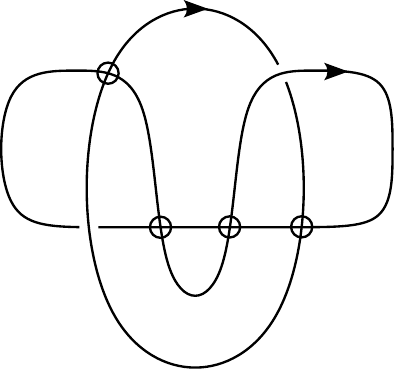}
\put(55,185){$\beta$}
\put(82,72){$\alpha$}
\put(142,72){$\beta$}
\put(185,72){$\alpha$}
\put(185,127){$a$}
\put(40,30){$\alpha(a)$}
\put(101,200){$\alpha\beta(a)$}
\put(5,127){$b$}
\put(88,160){$\beta^{-1}(b)$}
\put(97,30){$\alpha\beta^{-1}(b)$}
\put(240,127){$\alpha\beta^{-2}(b)$}
\put(145,93){$\beta^{-2}(b)$}
\put(101,93){$\beta^{-1}(b)$}
\put(72,93){$x$}
\put(170,180){$\color{red}\bullet$}
\put(43,89){$\color{red}\circ$}
\put(230,50){$\color{red}\bullet$\ $\alpha\beta(a)/\alpha\beta^{-2}(b)=a$}
\put(230,30){$\color{red}\circ$\ $\alpha^{-1}\beta^{-1}(b)*\alpha(a)=b$}
\end{overpic}
\caption{Operator quandle coloring conditions for a multi-virtual link. Here, $x=\alpha^{-1}\beta^{-1}(b)$.}\label{Fg:Special}
\end{figure}

\subsection{Cocycle invariants of small multi-virtual knots}

To illustrate the cocycle invariant for multi-virtual knots, let us use it to once again distinguish the two multi-virtual versions of $3_7$ obtained by interchanging the two types.

Let $Q_4=(R,*)$ be the Alexander quandle of order $4$ as above and let $A=\mathrm{Aut}(Q_4)$. It has been shown in \cite{Ame} that $\phi:Q_4\times Q_4\to (R,+)$ given by
\begin{displaymath}
    \phi(x,y) = y(x-y)^2
\end{displaymath}
is a cocycle of $Q_4$. (Here the operations $\cdot$, $+$ and $-$ are taken in $R=\Z_2[t^{\pm 1}]/(t^2+t+1)$.) For the convenience of the reader, here is the table of $\phi$:
\begin{displaymath}
    \begin{array}{c|cccc}
        \phi&0&1&t&t^{-1}\\
        \hline
        0&0&1&1&1\\
        1&0&0&t^{-1}&t\\
        t&0&t&0&t^{-1}\\
        t^{-1}&0&t^{1}&t&0
    \end{array}
\end{displaymath}
It turns out that the set
\begin{displaymath}
    \{g\in A: \phi(a,b) = \phi(g(a),g(b))\text{ for all }a,b\in Q_4\}
\end{displaymath}
of all automorphisms of $Q_4$ compatible with $\phi$ is precisely the subgroup $\langle\theta\rangle\le A$, where, as above, $\theta=(1,t,t^{-1})$. Since $\langle\theta\rangle$ is cyclic, any two automorphisms of $\langle\theta\rangle$ commute.

Consider the operator quandle $(Q_4,*,(\alpha,\beta))$ for some commuting automorphisms $\alpha$ and $\beta$ of $Q_4$. Let us assign the colors $a,b\in Q_4$ to the two selected arcs of $3_7$ as in the Appendix. This gives rise to an operator quandle coloring if and only if the conditions
\begin{equation}\label{Eq:37conditions}
    a*b = \beta^{-1}(\alpha^{-1}(b)*\alpha(a))\quad\text{and}\quad \beta\alpha(a) = b*\beta^{-1}(\alpha^{-1}(b)*\alpha(a))
\end{equation}
hold. 

Let us set $(\alpha,\beta) = (\theta,\theta^{-1})$. Then \eqref{Eq:37conditions} becomes
\begin{displaymath}
    a*b = b*\theta^{-1}(a)\quad\text{and}\quad a = b*(b*\theta^{-1}(a)),
\end{displaymath}
where we have used $\theta^3=1$. It can be checked that there are then precisely four operator quandle colorings of $3_7$, namely those given by $(a,b)\in\{(0,0),(t,0),(1,0),(t^{-1},0)\}$. These four colorings $C_1,\dots,C_4$ are depicted in Figure \ref{Fg:37ColoringsA}.

\begin{figure}[!htb]
    \centering
    \begin{minipage}{.21\textwidth}
        \begin{overpic}[width = 1.0\textwidth]{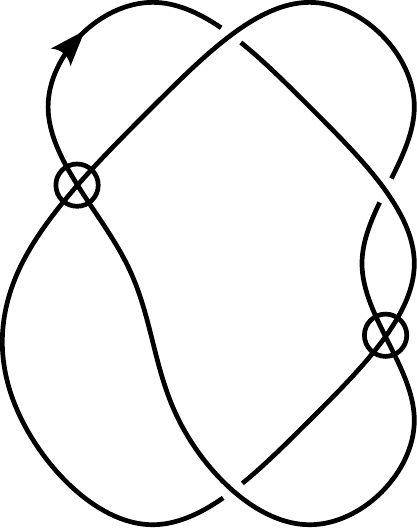}
            \put(99,43){$\alpha$}
            \put(4,78){$\beta$}
            \put(25,110){$0$}
            \put(66,90){$0$}
            \put(66,28){$0$}
            \put(14,16){$0$}
            \put(38,90){$0$}
            \put(76,60){$0$}
            \put(83,10){$0$}
            \put(45,60){$C_1$}
        \end{overpic}
    \end{minipage}
    \qquad
    \begin{minipage}{0.21\textwidth}
        \begin{overpic}[width = 1.0\textwidth]{37.pdf}
            \put(99,43){$\alpha$}
            \put(4,78){$\beta$}
            \put(25,110){$t$}
            \put(66,90){$0$}
            \put(66,28){$0$}
            \put(15,16){$t$}
            \put(38,90){$t^{-1}$}
            \put(77,60){$t$}
            \put(78,10){$t^{-1}$}
            \put(45,60){$C_2$}
        \end{overpic}
    \end{minipage}
    \qquad
    \begin{minipage}{0.21\textwidth}
        \begin{overpic}[width = 1.0\textwidth]{37.pdf}
            \put(99,43){$\alpha$}
            \put(4,78){$\beta$}
            \put(25,110){$1$}
            \put(66,90){$0$}
            \put(66,28){$0$}
            \put(14,16){$1$}
            \put(38,90){$t$}
            \put(76,60){$1$}
            \put(83,10){$t$}
            \put(45,60){$C_3$}
        \end{overpic}
    \end{minipage}
    \qquad
    \begin{minipage}{0.21\textwidth}
        \begin{overpic}[width = 1.0\textwidth]{37.pdf}
            \put(99,43){$\alpha$}
            \put(4,78){$\beta$}
            \put(23,110){$t^{-1}$}
            \put(66,90){$0$}
            \put(66,28){$0$}
            \put(14,16){$t^{-1}$}
            \put(38,90){$1$}
            \put(69,60){$t^{-1}$}
            \put(83,10){$1$}
            \put(45,60){$C_4$}
        \end{overpic}
    \end{minipage}
    \caption{Four operator quandle colorings for $3_7$ with $(\alpha,\beta)=(\theta,\theta^{-1})$.}\label{Fg:37ColoringsA}
\end{figure}

Summing up over the three classical crossings of $3_7$ from the top to the bottom, the cocycle invariant (with the cocycle $\phi$) therefore becomes
\begin{align*}
    &(-\phi(0,0)-\phi(0,0)+\phi(0,0)) + (-\phi(t,t^{-1})-\phi(t^{-1},0)+\phi(t^{-1},0))\\
    &+(-\phi(1,t)-\phi(t,0)+\phi(t,0)) + (-\phi(t^{-1},1)-\phi(1,0)+\phi(1,0))\\
    &= 0-t^{-1}-t^{-1}-t^{-1} = -3t^{-1}.
\end{align*}

Let us now set $(\alpha,\beta) = (\theta^{-1},\theta)$. Then \eqref{Eq:37conditions} becomes
\begin{displaymath}
    a*b = b*\theta(a)\quad\text{and}\quad a = b*(b*\theta(a)).
\end{displaymath}
It can be checked that there are precisely four operator quandle colorings of $3_7$, namely those given by $(a,b)\in\{(0,0),(t,1),(1,t^{-1}),(t^{-1},t)\}$. These four colorings $C'_1,\dots,C'_4$ are depicted in Figure \ref{Fg:37ColoringsB}.

\begin{figure}[!htb]
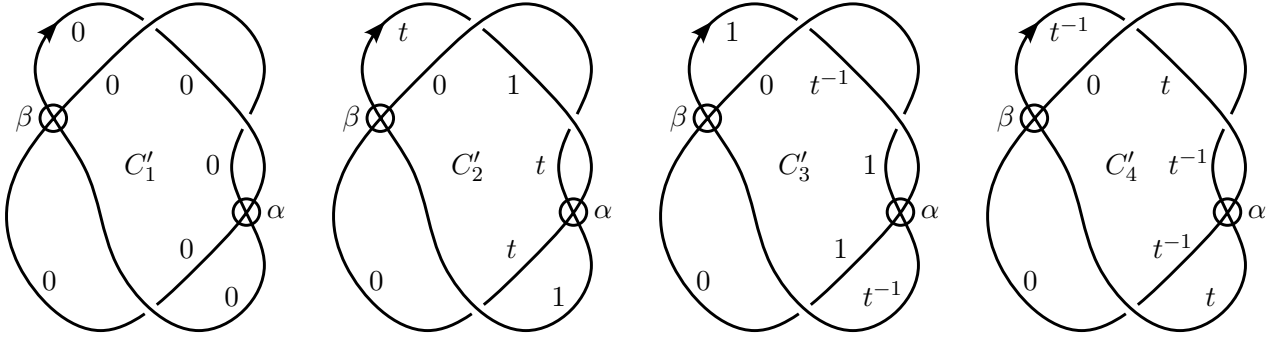

    \centering
    \begin{minipage}{.21\textwidth}
        \begin{overpic}[width = 1.0\textwidth]{37.pdf}
            \put(99,43){$\alpha$}
            \put(4,78){$\beta$}
            \put(25,110){$0$}
            \put(66,90){$0$}
            \put(66,28){$0$}
            \put(14,16){$0$}
            \put(38,90){$0$}
            \put(76,60){$0$}
            \put(83,10){$0$}
            \put(45,60){$C'_1$}
        \end{overpic}
    \end{minipage}
    \qquad
    \begin{minipage}{0.21\textwidth}
        \begin{overpic}[width = 1.0\textwidth]{37.pdf}
            \put(99,43){$\alpha$}
            \put(4,78){$\beta$}
            \put(25,110){$t$}
            \put(66,90){$1$}
            \put(66,28){$t$}
            \put(14,16){$0$}
            \put(38,90){$0$}
            \put(77,60){$t$}
            \put(83,10){$1$}
            \put(45,60){$C'_2$}
        \end{overpic}
    \end{minipage}
    \qquad
    \begin{minipage}{0.21\textwidth}
        \begin{overpic}[width = 1.0\textwidth]{37.pdf}
            \put(99,43){$\alpha$}
            \put(4,78){$\beta$}
            \put(25,110){$1$}
            \put(57,90){$t^{-1}$}
            \put(66,28){$1$}
            \put(14,16){$0$}
            \put(38,90){$0$}
            \put(77,60){$1$}
            \put(77,10){$t^{-1}$}
            \put(45,60){$C'_3$}
        \end{overpic}
    \end{minipage}
    \qquad
    \begin{minipage}{0.21\textwidth}
        \begin{overpic}[width = 1.0\textwidth]{37.pdf}
            \put(99,43){$\alpha$}
            \put(4,78){$\beta$}
            \put(24,110){$t^{-1}$}
            \put(66,90){$t$}
            \put(63,28){$t^{-1}$}
            \put(14,16){$0$}
            \put(38,90){$0$}
            \put(69,60){$t^{-1}$}
            \put(83,10){$t$}
            \put(45,60){$C'_4$}
        \end{overpic}
    \end{minipage}
    \caption{Four operator quandle colorings for $3_7$ with $(\alpha,\beta)=(\theta^{-1},\theta)$.}\label{Fg:37ColoringsB}
\end{figure}

The relevant cocycle invariant becomes
\begin{align*}
    &(-\phi(0,0)-\phi(0,0)+\phi(0,0)) + (-\phi(t,0)-\phi(0,1)+\phi(1,t))\\
    &+(-\phi(1,0)-\phi(0,t^{-1})+\phi(t^{-1},1)) + (-\phi(t^{-1},0)-\phi(0,t)+\phi(t,t^{-1}))\\
    &= 0+(0-1+t^{-1})+(0-1+t^{-1})+(0-1+t^{-1}) = -3+3t^{-1}.
\end{align*}

By Theorem \ref{Th:Coc}, we have now proved once again that interchanging the two distinct types in the multi-virtual knot $3_7$ results in two nonequivalent multi-virtual knots.

\subsection{A family of nonequivalent multi-virtual knots with a single classical crossing}\label{Ss:SingleClassical}

Let $K(n)$ be the oriented multi-virtual knot obtained by a sequence of $2n$ twists with virtual crossings that alternate distinct types $\alpha$ and $\beta$, followed by a single classical crossing, followed by arcs closing the knot. Let the orientation be chosen so that the classical crossing is positive. The knot $K(3)$ is illustrated in Figure \ref{Fg:K3}.

\begin{figure}[!ht]
\centering
\begin{overpic}[width = 0.9\textwidth]{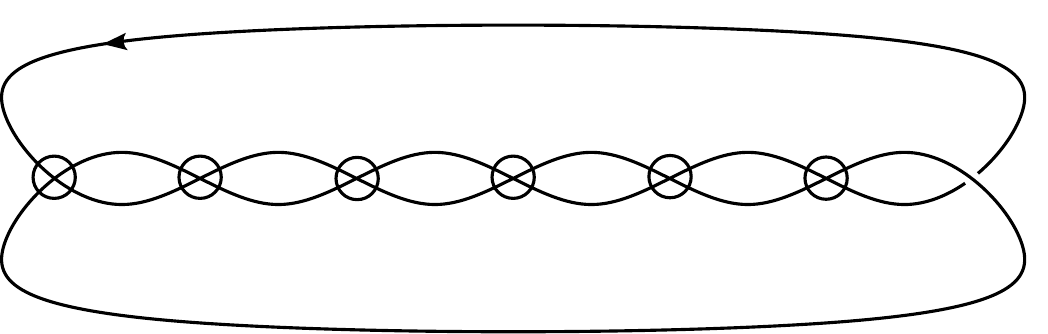}

\put(36,61){$\alpha$}
\put(96,61){$\beta$}
\put(158,61){$\alpha$}
\put(222,61){$\beta$}
\put(285,61){$\alpha$}
\put(348,61){$\beta$}
\put(170,113){$a * \beta^3\alpha^{-3}(a) = a$}
\put(34,41){$\alpha^{-1}(a)$}
\put(30,79){$\beta^3\alpha^{-2}(a)$}
\put(90,41){$\beta^2\alpha^{-2}(a)$}
\put(95,79){$\beta\alpha^{-1}(a)$}
\put(155,41){$\beta\alpha^{-2}(a)$}
\put(155,79){$\beta^2\alpha^{-1}(a)$}
\put(220,41){$\beta\alpha^{-1}(a)$}
\put(218,79){$\beta^2\alpha^{-2}(a)$}
\put(280,41){$\beta^2\alpha^{-3}(a)$}
\put(292,79){$\beta(a)$}
\put(364,41){$a$}
\put(346,79){$\beta^3\alpha^{-3}(a)$}

\end{overpic}
\caption{The oriented multi-virtual knot $K(3)$ colored with an operator quandle $(Q,*,(\alpha,\beta)).$}\label{Fg:K3}
\end{figure}

\begin{prop}
Let $p<q$ be odd primes. Then $K(p)$ is not equivalent to $K(q)$.
\end{prop}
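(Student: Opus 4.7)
The plan is to distinguish $K(p)$ and $K(q)$ via the operator quandle coloring invariant of Section~\ref{Sc:Invariants}. First I would chase labels around the $2n$ virtual crossings of $K(n)$, following the pattern illustrated for $n=3$ in Figure~\ref{Fg:K3}. For any operator quandle $(Q,*,(\alpha,\beta))$ with $\alpha$ and $\beta$ commuting, this shows that the single classical crossing imposes the closure condition
\[
    a * \beta^n\alpha^{-n}(a) \;=\; a.
\]
Setting $\gamma = \beta\alpha^{-1}$, which is an automorphism of $Q$ since $\alpha$ and $\beta$ commute, the condition becomes $a * \gamma^n(a) = a$, and hence
\[
    \mathrm{Col}\bigl(K(n),Q,*,(\alpha,\beta)\bigr) \;=\; \bigl|\{a \in Q : a * \gamma^n(a) = a\}\bigr|.
\]

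Next I would construct a single operator quandle that separates $n=p$ from $n=q$. By Dirichlet's theorem, pick a prime $\ell$ with $\ell \equiv 1 \pmod{p}$, a unit $s \in \mathbb{Z}_\ell^*$ of order exactly $p$, and any $t \in \mathbb{Z}_\ell \setminus \{0,1\}$. Let $Q = \mathbb{Z}_\ell$ with the Alexander quandle operation $a*b = ta + (1-t)b$, and set $\alpha = \mathrm{id}$ and $\beta(a) = sa$; these are commuting automorphisms of $Q$, and $\gamma = \beta$ has order exactly $p$. In an Alexander quandle with $t \ne 1$, the identity $a * b = a$ is equivalent to $a = b$, so the coloring condition reduces to $\gamma^n(a) = a$, i.e., $(s^n - 1)\,a \equiv 0 \pmod{\ell}$. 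For $n=p$ we have $s^p = 1$, so every $a \in \mathbb{Z}_\ell$ qualifies and $\mathrm{Col}(K(p),Q,*,(\alpha,\beta)) = \ell$. For $n=q$, distinctness of the primes $p,q$ gives $q \not\equiv 0 \pmod{p}$, so $s^q \ne 1$; then $s^q - 1$ is a unit in the field $\mathbb{Z}_\ell$, forcing $a = 0$, and $\mathrm{Col}(K(q),Q,*,(\alpha,\beta)) = 1$. Since $\ell > 1$, Theorem~\ref{Th:Coloring} implies $K(p) \not\equiv K(q)$.

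The main obstacle in this plan is the rigorous derivation of the closure condition for arbitrary $n$. Figure~\ref{Fg:K3} handles $n=3$ by direct label chasing, and I would extend it by induction on $n$: a block of two consecutive virtual crossings, one of type $\alpha$ and one of type $\beta$, shifts both strand labels by an application of $\gamma^{\pm 1}$ (using that $\alpha\beta = \beta\alpha$), so the exponent pattern $\beta^i\alpha^{-j}$ visible in Figure~\ref{Fg:K3} propagates cleanly as $n$ grows. With this labeling lemma in hand, the remainder of the argument reduces to elementary arithmetic in the field $\mathbb{Z}_\ell$.
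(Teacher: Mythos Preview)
Your argument is correct, but it takes a heavier route than the paper. Both proofs start from the same closure condition $a*\beta^n\alpha^{-n}(a)=a$ read off the diagram, and both set $\alpha=1$ so that the condition becomes $a*\beta^n(a)=a$. The paper then chooses the dihedral quandle $(\mathbb Z_p,*)$ with $x*y=2y-x$ and the translation $\beta=\rho$, $\rho(x)=x+1$; since $\rho$ has order exactly $p$, the condition holds for all $a$ when $n=p$ and for no $a$ when $n=q$ (the equation $a+2q\equiv a\pmod p$ forces $p\mid q$). You instead invoke Dirichlet to produce a prime $\ell\equiv 1\pmod p$, build an Alexander quandle on $\mathbb Z_\ell$, and use a scalar automorphism $\beta(a)=sa$ of order $p$. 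Your approach buys a uniform ``$a*b=a\Leftrightarrow a=b$'' reduction via the choice $t\ne 1$, but at the cost of a deep theorem; the paper's choice avoids Dirichlet entirely and keeps the quandle size equal to $p$. Your inductive justification of the closure condition is also more careful than the paper, which simply cites the $n=3$ figure as a pattern.
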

\begin{proof}
Attempting to color the multi-virtual knot $K(n)$ with elements of an operator quandle $(Q,*,(\alpha,\beta))$ yields the condition
\begin{equation}\label{Eq:K3Condition}
    a*\beta^n\alpha^{-n}(a)=a,
\end{equation}
as in Figure \ref{Fg:K3}. Consider the dihedral quandle $(Q,*) = (\mathbb Z_p,*)$ with multiplication $x*y = 2y-x$ and note that $\rho(x)=x+1$ is an automorphism of $(Q,*)$ of order $p$. Equip $Q$ with the operators $\alpha=1$ and $\beta=\rho$. The condition \eqref{Eq:K3Condition} for $K(p)$ then becomes $a*\rho^p(a)=a$, which holds for every $a\in Q$ since $\rho^p(a)=a$ and $a*a=a$. Hence there are $p$ colorings of $K(p)$ by the elements of the operator quandle $(Q,*,(1,\rho))$. Keeping the operator quandle but changing the knot, the condition \eqref{Eq:K3Condition} for $K(q)$ becomes $a*\rho^q(a)=a$. Now, $a*\rho^q(a) = a*(a+q) = 2(a+q)-a = a+2q\not\equiv a \pmod p$ since $p<q$ are odd primes. Hence there are no colorings of $K(q)$ by the elements of the operator quandle $(Q,*,(1,\rho))$. By Theorem \ref{Th:Coloring}, the multi-virtual knots $K(p)$ and $K(q)$ are not equivalent.
\end{proof}

Note that $\mathrm{cr}_t(K(n))\le 2n+1$ but we we do not claim that equality holds. Nevertheless, Corollary \ref{Cr:MVBound} guarantees that the sequence $(\mathrm{cr}_t(K(p)):p\text{ a prime })$ is not bounded.

\begin{problem}
Let $K(n)$ be the multi-virtual knot constructed above. Is the total crossing number of $K(n)$ equal to $2n+1$?
\end{problem}

\begin{problem}
Initiate the catalog of multi-virtual knots organized by their total crossing number. 
\end{problem}

The discussion in Subsection \ref{Ss:SameVirtual} implies that for a multi-virtual knot with $n$ virtual crossings, it suffices to consider at most $2n$ types of virtual crossings in order to fully understand the equivalence problem. The catalog of multi-virtual knots can therefore be organized by the total crossing number thanks to Proposition \ref{Pr:MVBound}.

\section{Appendix: Multi-virtual versions of the first eight virtual knots}\label{Sc:Diagrams}

\begin{figure}[!ht]
\centering
\begin{overpic}[width = 0.32\textwidth]{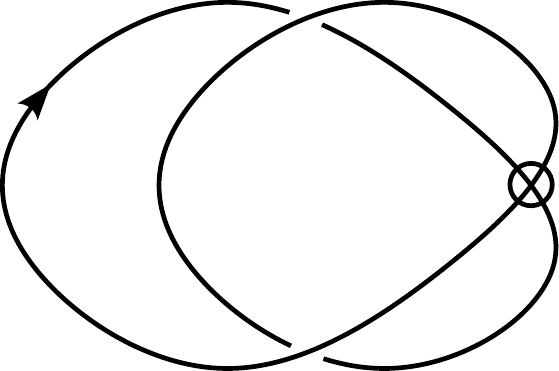}
\put(152,48){$\alpha$}
\put(33,48){$a$}
\put(-35,48){$\alpha^{-1}(a)$}
\put(75,65){$\alpha^{-1}(a)/a$}
\put(145,10){$a*\alpha(a)$}
\put(180,48){$\color{red}\bullet$\ $(a/\alpha(a))/\alpha^{-1}(a)=a$}
\put(80,11){$\color{red}\bullet$}
\end{overpic}
\caption{The virtual knot $2_1$ and its operator quandle coloring condition.}\label{Fg:21}
\end{figure}

\begin{figure}[!ht]
\centering
\begin{overpic}[width = 0.25\textwidth]{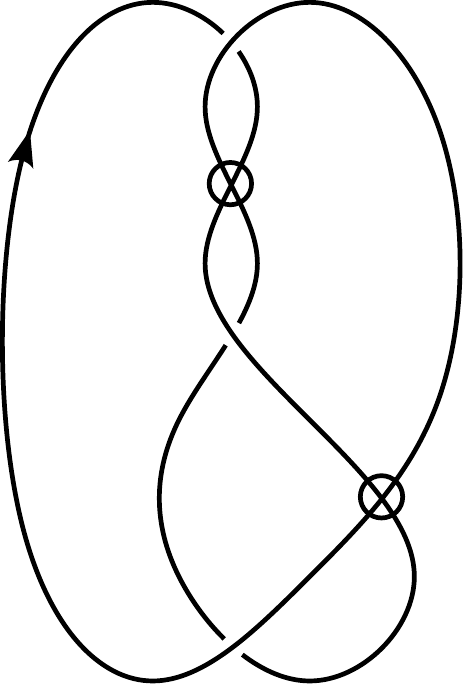}
\put(105,45){$\alpha$}
\put(66,124){$\beta$}
\put(70,105){$a$}
\put(108,160){$\beta^{-1}(a)$}
\put(-45,20){$\alpha^{-1}\beta^{-1}(a)$}
\put(70,145){$x$}
\put(40,105){$y$}
\put(105,10){$a/\alpha(a)$}
\put(30,40){$z$}
\put(56,97){$\color{red}\bullet$}
\put(165,85){$\color{red}\bullet$\ $y*z=a$}
\end{overpic}
\caption{The virtual knot $3_1$ and its operator quandle coloring condition. Here, $x=\alpha^{-1}\beta^{-1}(a)/\beta^{-1}(a)$, $y=\alpha^{-1}(a)/a$ and $z=(a/\alpha(a))/\alpha^{-1}\beta^{-1}(a)$.}\label{Fg:31}
\end{figure}

\begin{figure}[!ht]
\centering
\begin{overpic}[width = 0.25\textwidth]{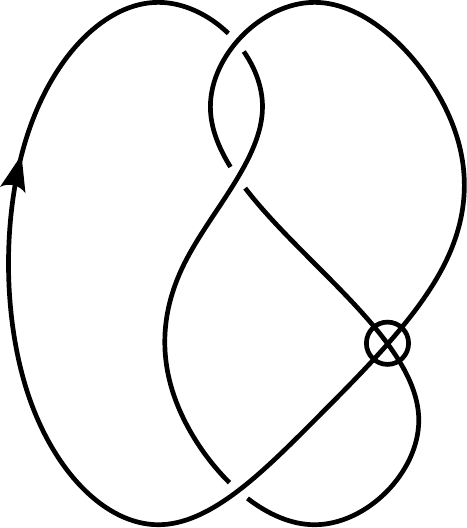}
\put(106,44){$\alpha$}
\put(42,105){$a$}
\put(-23,70){$\alpha(a)$}
\put(68,95){$\alpha(a)/a$}
\put(108,10){$(\alpha(a)/a)*\alpha(a)$}
\put(70,60){$x$}
\put(57,95){$\color{red}\bullet$}
\put(160,68){$\color{red}\bullet$\ $x/(\alpha(a)/a)=a$}
\end{overpic}
\caption{The virtual knot $3_2$ and its operator quandle coloring condition. Here, $x=(a/\alpha^{-1}(a))*a$.}\label{Fg:32}
\end{figure}

\begin{figure}[!ht]
\centering
\begin{overpic}[width = 0.25\textwidth]{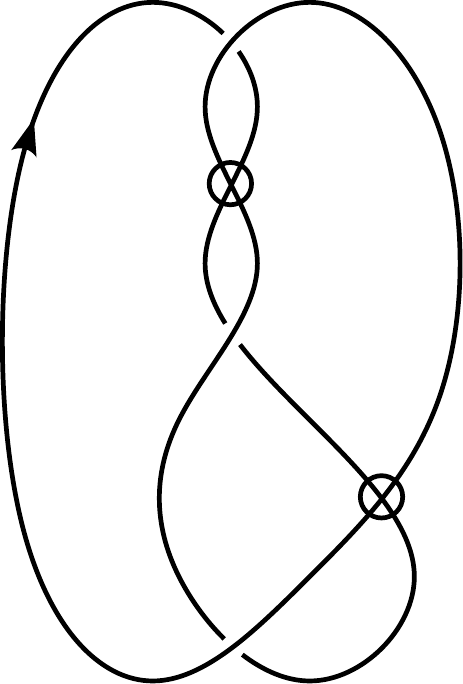}
\put(105,45){$\alpha$}
\put(68,124){$\beta$}
\put(30,40){$a$}
\put(108,160){$\beta^{-1}(a)$}
\put(-53,85){$\alpha^{-1}\beta^{-1}(a)$}
\put(70,148){$x$}
\put(8,95){$\alpha^{-1}(a){/}a$}
\put(85,70){$y$}
\put(105,10){$z$}
\put(57,15){$\color{red}\bullet$}
\put(160,85){$\color{red}\bullet$\ $z/\alpha^{-1}\beta^{-1}(a)=a$}
\end{overpic}
\caption{The virtual knot $3_3$ and its operator quandle coloring condition. Here, $x=\alpha^{-1}\beta^{-1}(a)/\beta^{-1}(a)$, $y=(\alpha^{-1}(a)/a)/a$ and $z=(a/\alpha(a))/\alpha(a)$.}\label{Fg:33}
\end{figure}

\begin{figure}[!ht]
\centering
\begin{overpic}[width = 0.25\textwidth]{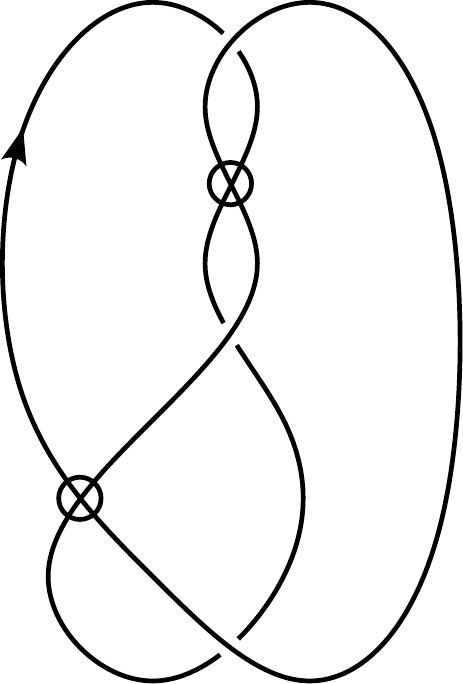}
\put(28,45){$\alpha$}
\put(67,124){$\beta$}
\put(8,10){$a$}
\put(69,106){$\alpha^{-1}(a)$}
\put(105,160){$\beta^{-1}\alpha^{-1}(a)$}
\put(-34,85){$\beta^{-1}(a)$}
\put(70,145){$x$}
\put(7,98){$a/\alpha^{-1}(a)$}
\put(82,50){$y$}
\put(54,14){$\color{red}\bullet$}
\put(160,85){$\color{red}\bullet$\ $y*\beta^{-1}\alpha^{-1}(a)=a$}
\end{overpic}
\caption{The virtual knot $3_4$ and its operator quandle coloring condition. Here, $x=\beta^{-1}(a)/\beta^{-1}\alpha^{-1}(a)$ and $y=(a/\alpha^{-1}(a))/\alpha^{-1}(a)$.}\label{Fg:34}
\end{figure}

\begin{figure}[!ht]
\centering
\begin{overpic}[width = 0.25\textwidth]{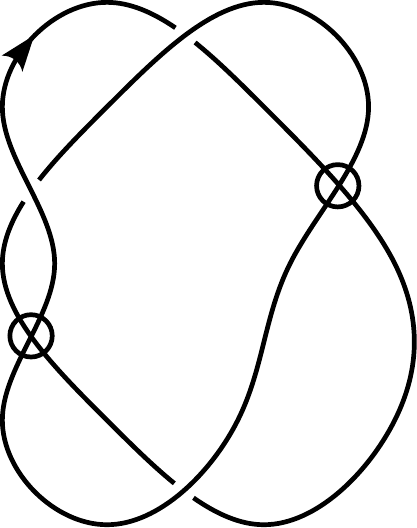}
\put(18,52){$\alpha$}
\put(104,95){$\beta$}
\put(25,123){$a$}
\put(-30,10){$\beta^{-1}(a)$}
\put(-52,130){$\alpha^{-1}\beta^{-1}(a)$}
\put(75,125){$x$}
\put(108,10){$\alpha^{-1}(a)/\beta(a)$}
\put(33,35){$y$}
\put(-10,75){$z$}
\put(7,103){$\color{red}\bullet$}
\put(160,75){$\color{red}\bullet$\ $z/\alpha^{-1}\beta^{-1}(a)=a$}
\end{overpic}
\caption{The virtual knot $3_5$ and its operator quandle coloring condition. Here, $x=\alpha^{-1}\beta^{-1}(a)/a$, $y=(\alpha^{-1}(a)/\beta(a))/\beta^{-1}(a)$ and $z=(a/\alpha\beta(a))/\alpha\beta^{-1}(a)$.}\label{Fg:35}
\end{figure}

\begin{figure}[!ht]
\centering
\begin{overpic}[width = 0.32\textwidth]{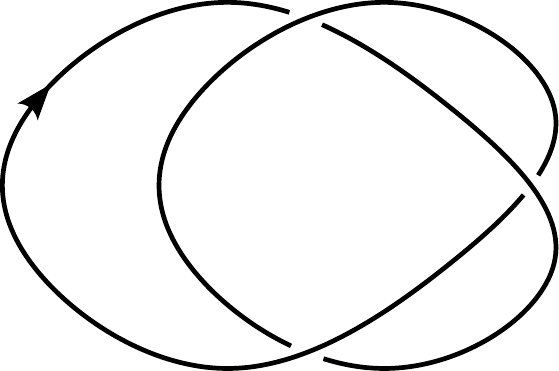}
\put(108,68){$a$}
\put(50,50){$b$}
\put(-20,50){$b/a$}
\put(80,85){$\color{red}\bullet$}
\put(80,10){$\color{red}\circ$}
\put(180,60){$\color{red}\bullet$\ $(b/a)/b=a$}
\put(180,30){$\color{red}\circ$\ $a/(b/a)=b$}
\end{overpic}
\caption{The virtual knot $3_6$ (more commonly known as the left-handed trefoil knot) and its (operator) quandle coloring conditions.}\label{Fg:36}
\end{figure}

\clearpage

\begin{figure}[!ht]
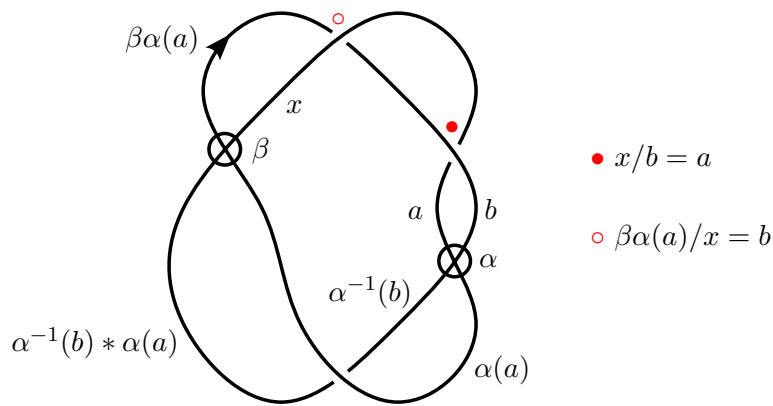

\centering
\begin{overpic}[width = 0.25\textwidth]{37.pdf}
\put(118,52){$\alpha$}
\put(32,93){$\beta$}
\put(91,70){$a$}
\put(120,70){$b$}
\put(116,10){$\alpha(a)$}
\put(61,39){$\alpha^{-1}(b)$}
\put(-59,20){$\alpha^{-1}(b)*\alpha(a)$}
\put(45,110){$x$}
\put(-16,135){$\beta\alpha(a)$}
\put(105,102){$\color{red}\bullet$}
\put(62,143){$\color{red}\circ$}
\put(160,90){$\color{red}\bullet$\ $x/b=a$}
\put(160,60){$\color{red}\circ$\ $\beta\alpha(a)/x = b$}
\end{overpic}
\caption{The virtual knot $3_7$ and its operator quandle coloring conditions. Here, $x=\beta^{-1}(\alpha^{-1}(b)*\alpha(a))$.}\label{Fg:37}
\end{figure}

\end{document}